\documentclass[11pt]{article}

\usepackage{amsmath,amssymb,amsfonts,amsthm}
\usepackage[margin=2.4cm]{geometry}
\usepackage[shortlabels]{enumitem}
\usepackage{microtype}
\usepackage{hyperref}
\usepackage[capitalise]{cleveref}
\usepackage{aliascnt}

\usepackage{xcolor}

\newtheorem{theorem}{Theorem}[section]
\newaliascnt{lemma}{theorem}
\newtheorem{lemma}[lemma]{Lemma}
\aliascntresetthe{lemma}
\newtheorem{prop}[theorem]{Proposition}

\newtheorem{Observation}[theorem]{Observation}
\newtheorem{fact}[theorem]{Fact}

\newaliascnt{claim}{theorem}
\newtheorem{claim}[claim]{Claim}
\aliascntresetthe{claim}

\crefname{claim}{Claim}{Claims}
\Crefname{claim}{Claim}{Claims}

\theoremstyle{definition}
\newtheorem{defn}[theorem]{Definition}
\newtheorem{rmk}[theorem]{Remark}
\crefname{lemma}{Lemma}{Lemmas}
\Crefname{lemma}{Lemma}{Lemmas}

\newcommand{\F}{\mathcal F}

\newcommand{\sq}{\sqcup}

\title{The maximum $t$-diversity of $t$-intersecting families of permutations}
\author{
Yuhang Cao\thanks{School of Mathematical Sciences, Capital Normal University, Beijing, China. Email: 2260501024@cnu.edu.cn, gnge@zju.edu.cn, 2250501013@cnu.edu.cn. Gennian Ge is supported by the National Key Research and Development Program of China under Grant 2025YFC3409900, the National Natural Science Foundation of China under Grant 12231014, and Beijing Scholars Program.}
\and
Gennian Ge\footnotemark[1]
\and
Jian Wang\thanks{School of Mathematics, Sichuan University,
Chengdu, China. Email: wangjianmath01@scu.edu.cn. Jian Wang is supported by National Natural Science
Foundation of China Grant no. 12471316.}
\and
Jialuo Wang\thanks{School of Mathematical Sciences, University of Science and Technology of China,
Hefei, China. Email: 673391676@qq.com}
\and
Xiaochen Zhao\footnotemark[1]
}
\date{September, 2026}

\begin{document}
\maketitle

\begin{abstract}
The study of $t$-intersecting families in symmetric groups received lots of attention in the last two decades. In this paper, we study two different kinds of stability results for $t$-intersecting families in symmetric group. Let $\Sigma_n$ denote the symmetric group on $\{1,2,\ldots,n\}$. 
The $t$-diversity $\gamma_t(\F)$ of  $\F\subseteq\Sigma_n$ is defined as the minimum number of members of $\F$, whose deletion results in a family with transversal number $t$. The star $t$-diversity $\gamma_t^{\star}(\mathcal{F})$ is defined as the minimum number of members of $\F$, whose deletion results in a $t$-star.  For $n$ relatively large with respect to $t$, we determine the best possible bounds for both $\gamma_t(\F)$ and $\gamma_t^{\star}(\mathcal{F})$ over all $t$-intersecting families $\F\subseteq\Sigma_n$. The equality holding conditions are characterized. For $\gamma_t(\F)$, the extremal case is generated by all $2t$-subsets of a $3t$-partial permutation or, when $t=2$, the family of complements of the lines of the Fano plane. For $\gamma_t^{\star}(\mathcal{F})$, the extremal family is generated by all $(t+1)$-subsets of a $(t+2)$-partial permutation.
\end{abstract}

\section{Introduction}

The study of intersecting families is one of the central themes of extremal
set theory. Let $[n]=\{1,2,\ldots,n\}$ and $\binom{[n]}{k}$ denote the family of all $k$-element subsets of $[n]$. A family $\mathcal A\subseteq \binom{[n]}{k}$ is called intersecting if any two of its members have non-empty intersection, and more generally it is called $t$-intersecting if any two of its members intersect in at least $t$ points.  The classical theorem of Erd\H{o}s, Ko and Rado determines the maximum size of an intersecting subfamily of $\binom{[n]}{k}$ when $n$ is sufficiently large with respect to $k$ \cite{EKR}. The exact bound in the $t$-intersecting version was proved by Frankl\cite{Frankl77} for $t\ge 15$, and subsequently by Wilson\cite{Wilson84} for all $t$, and the complete intersection problem for all parameters was settled by Ahlswede and Khachatrian \cite{AhlswedeKhachatrian97}.  Alongside these exact extremal theorems, non-triviality, stability, degree conditions and shifting methods have played a fundamental role; see, for example, the Hilton--Milner theorem \cite{HiltonMilner67} and work of Frankl and his collaborators \cite{Frankl78,Frankl77,FranklShifting87,Frankl87,FranklFuredi86}.

It is natural to ask for Erd\H{o}s--Ko--Rado type theorems in other highly symmetric discrete spaces.  One of the most important such settings is the symmetric group.  Let $\Sigma_n$ denote the family of all permutations of $[n]$.  We identify a permutation $\sigma\in\Sigma_n$ with the perfect matching
\[
        \{(i,\sigma(i)): i\in[n]\}\subset [n]\times[n].
\]
Thus two permutations are $t$-intersecting if, as subsets of $[n]\times[n]$, they have at least $t$ common points; equivalently, they agree on at least $t$ positions.  Deza and Frankl initiated the permutation analogue of the Erd\H{o}s--Ko--Rado theorem and proved that an intersecting family in $\Sigma_n$ has size at most $(n-1)!$ \cite{DezaFrankl77}.  Cameron and Ku proved the corresponding strict EKR property, showing that the maximum intersecting families are precisely the cosets of point stabilizers \cite{CameronKu03}; see also \cite{LaroseMalvenuto04,GodsilMeagher09}.  Ellis, Friedgut and Pilpel proved the Deza--Frankl conjecture for $t$-intersecting families of permutations when $n$ is sufficiently large depending on $t$ \cite{EllisFriedgutPilpel11}.  More recently, major progress on forbidden intersection problems for permutations has been obtained through stability, junta, hypercontractive and spread approximation methods \cite{EllisKellerLifshitz24,KellerLifshitzMinzerSheinfeld24,KUPAVSKII2024109653,Kupavskii24b}.

The question of $t$-diversity has a long history. For the case of $ t=1$, the diversity of an intersecting family  $\mathcal F\subset \binom{[n]}{k}$ is
\[
        \gamma(\mathcal F)=\min_{x\in[n]} |\{F\in\mathcal F:x\notin F\}|=|\mathcal F|-\Delta(\mathcal F).
\]
where
\[
        \Delta(\mathcal F)=\max_{x\in[n]} |\{F\in\mathcal F:x\in F\}|,
\]
Equivalently, $\gamma(\mathcal F)$ measures the distance from $\mathcal F$ to a star, which is the trivial Erd\H{o}s--Ko--Rado theorem extremal examples.  This parameter was originally called the unbalance of set systems by Lemons and Palmer \cite{LemonsPalmer08}.  Kupavskii proved a conjecture of Frankl on the maximum diversity of uniform intersecting families for large $n$ \cite{Kupavskii18}.  Frankl subsequently improved the range of validity using maximum-degree methods \cite{Frankl20}, and Frankl and Kupavskii developed a more systematic treatment of diversity \cite{FranklKupavskii21}.  Further refinements and variants were obtained by Frankl and the third author \cite{FranklWang24,FranklWang24MaxDegree,FranklWang25Cdiversity}.  Related notions, including $t$-diversity and generalized diversity, have also been studied \cite{KuWong20,MagnanPalmerWood24}.

Xiao and the third author recently studied the diversity for intersecting families in the symmetric group \cite{WangXiao25}. The diversity of intersecting family $\mathcal F\subset\Sigma_n$ is
\[
        \gamma_1(\mathcal F)=
        \min_{(i,j)\in[n]\times[n]}
        |\{\sigma\in\mathcal F:\sigma(i)\ne j\}|.
\]
They proved the following sharp result.

\begin{theorem}[{\cite{WangXiao25}}]\label{thm:wang-xiao}
Let $\mathcal F\subset\Sigma_n$ be an intersecting family.  If $n\ge 500$, then
\[
        \gamma_1(\mathcal F)\le (n-3)(n-3)!.
\]
Moreover, the bound is tight.
\end{theorem}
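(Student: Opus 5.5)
We argue by taking an intersecting family $\mathcal F\subset\Sigma_n$ with $\gamma_1(\mathcal F)$ maximal and comparing it with a fixed extremal construction. \textbf{Tightness} comes first. Fix the partial permutation $P=\{(1,1),(2,2),(3,3)\}$ and let $\mathcal F_0$ consist of all permutations containing at least two of its three points. Any two such permutations share one of these points (two $2$-subsets of a $3$-set meet), so $\mathcal F_0$ is intersecting. By inclusion--exclusion, $|\mathcal F_0|=3(n-2)!-2(n-3)!$, and each point of $P$ lies in $2(n-2)!-(n-3)!$ members. The number of members avoiding $(1,1)$ is therefore $(n-2)!-(n-3)!=(n-3)(n-3)!$. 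A point not in $P$ is contained in far fewer members, about $3(n-3)!$, so $\gamma_1(\mathcal F_0)=(n-3)(n-3)!$.

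\textbf{Upper bound.} Suppose $\gamma_1(\mathcal F)>(n-3)(n-3)!$; we may take $\mathcal F$ maximal intersecting. The plan is a spread-approximation / junta argument in the style of \cite{KUPAVSKII2024109653,Kupavskii24b}. Greedily find small partial permutations $T$ (of size at most some constant $q$) such that $\mathcal F(T)=\{\sigma\in\mathcal F: T\subset\sigma\}$ is large relative to $(n-|T|)!$, peeling them off until the remainder is negligible, $O(n^{-1}(n-1)!)$. The generating sets obtained in this way must pairwise intersect: if two disjoint, compatible generators $T,T'$ each had large spread families, a standard spread argument would produce disjoint permutations from $\mathcal F(T)$ and $\mathcal F(T')$.

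Next we show that every generator has size at most $2$. Generators of size $3$ or more cover only $O((n-3)!)$ permutations in total, since there are boundedly many of them. If some generator has size $1$, say $\{(i,j)\}$, then every generator of size $2$ must contain $(i,j)$, or else it cannot meet a further small generator. In that case the family is essentially the star at $(i,j)$ plus $O((n-3)!)$ extra permutations, so $\gamma_1(\mathcal F)\le c\,(n-3)!<(n-3)(n-3)!$ for $n\ge 500$. Otherwise every generator has size exactly $2$, and they form a pairwise intersecting family of $2$-edges in the bipartite graph $[n]\times[n]$ with no two edges sharing only a row or column clash. Such a family is either a triangle (three edges on a matching of size $3$) or a star (all edges through a common point). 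The star case again gives a near-star with small diversity. In the triangle case, $\mathcal F\subseteq\mathcal F_0$ up to the leftover permutations.

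\textbf{Main obstacle.} The hard part is the final exact comparison. We must show that in the triangle case $\mathcal F\subseteq\mathcal F_0$ exactly, and that in the other cases the diversity really falls below $(n-3)(n-3)!$ rather than merely being $O((n-2)!)$. Our first attempt will be a direct counting argument. Each leftover permutation $\sigma\notin\mathcal F_0$ must intersect every member of $\mathcal F_0$, but it contains at most one point of $P$, and the permutations in $\mathcal F_0$ disjoint from $\sigma$ number $\Omega((n-2)!/e)$-ish, a contradiction. This forces $\mathcal F=\mathcal F_0$ and hence the sharp bound, with $n\ge 500$ absorbing the constants.
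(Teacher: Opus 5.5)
Your tightness computation is correct, and your outline is essentially the $t=1$ case of the paper's proof of \Cref{thm:main-bound}. The paper does not prove this statement itself; it quotes it from Wang--Xiao and recovers it along these lines only for $n\ge 2080$. The reduction step of your plan has genuine gaps. First, ``generators of size $\ge3$ are boundedly many'' does not follow from the peeling. For example, $n$ triples $\{p,x_j,y_j\}$ with pairwise disjoint petals $\{x_j,y_j\}$ are pairwise intersecting, yet their fibres cover $\Theta((n-2)!)$ permutations. The paper needs an extra idea here. Whenever $i+1$ generators of size $\le i$ form a pseudo-sunflower, every generator must meet its centre, so all generators containing the centre can be replaced by the centre. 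After iterating, F\"uredi's bound (\Cref{thm:pseudo-sunflower-free}) leaves at most $2i(i-1)^{i-1}$ generators of each size $i\ge3$. Second, your error terms are of the wrong order. A remainder of order $n^{-1}(n-1)!$ is comparable to the target $(n-3)(n-3)!=(n-2)!-(n-3)!$. The total error $|\mathcal F''|$ (remainder plus everything covered by generators of size $\ge3$) must be below $(n-3)(n-3)!$ in the star case and below roughly $(1/e-0.001)(n-2)!$ in the triangle case. The paper gets $|\mathcal F''|\le 2^{-5}(n-2)!$ by taking $q=\lceil 3\log_\tau n\rceil$, not a constant, so that $|\mathcal F'|\le\tau^{-q-1}n!\le\frac{10}{13}(n-3)!$. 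Relatedly, the constants do not absorb $n\ge500$. Observation~\ref{ob:2} needs spreadness $r>2^5\log_2(2n)\approx 319$ at $n=500$, while a nonempty family of permutations is never more than $(n!)^{1/n}\approx n/e\approx 184$-spread.

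The triangle case is where the argument as written actually fails. You assert that $\sigma\in\mathcal F\setminus\mathcal F_0$ must meet every member of $\mathcal F_0$. It only has to meet members of $\mathcal F$, and nothing shows $\mathcal F_0\subseteq\mathcal F$. Maximality does not help, since such $\sigma$ are precisely what could keep members of $\mathcal F_0$ out of $\mathcal F$. The missing ingredient is the fibre lemma (\Cref{lem:large-fibre}). The permutations through $\{a,b\}$ that meet a fixed $\sigma$ with $\sigma\cap\{a,b\}=\emptyset$ number at most $(1-1/e+0.0005)(n-2)!$. Hence if $|\mathcal F[\{a,b\}]|\ge(1-1/e+0.001)(n-2)!$, every member of $\mathcal F$ meets $\{a,b\}$. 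With $P=\{a,b,c\}$, the paper then splits into two cases. If $\mathcal F[\{a,b\}]$ and $\mathcal F[\{a,c\}]$ are both this large, every member avoiding $a$ contains $b$ and $c$, so $\gamma_1(\mathcal F)\le|\mathcal F(\overline{\{a\}})|\le(n-2)!-(n-3)!$ directly, without ever proving $\mathcal F\subseteq\mathcal F_0$. If, say, $\mathcal F[\{a,b\}]$ is below the threshold, then $\gamma_1(\mathcal F)\le|\mathcal F(\overline{\{c\}})|\le|\mathcal F[\{a,b\}]|+|\mathcal F''|<\frac23(n-2)!<(n-3)(n-3)!$. Alternatively, you can stay inside your contradiction set-up. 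The hypothesis $\gamma_1(\mathcal F)>(n-3)(n-3)!$ shows that $\mathcal F$ omits fewer than $|\mathcal F''|$ members of $\Sigma_n[\{a,b\},\overline{\{c\}}]$. Only with that, together with $|\mathcal F''|=O((n-3)!)$, does your $\Omega((n-2)!/e)$ count force every member of $\mathcal F$ to meet each of $\{a,b\},\{b,c\},\{a,c\}$, and hence $\mathcal F\subseteq\mathcal F_0$.
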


Our goal is to extend this theorem from intersecting families to $t$-intersecting families of permutations.  Since $\Sigma_n$ is a sparse and highly dependent subfamily of $\binom{[n]\times[n]}{n}$, diversity results for ordinary uniform set systems do not apply directly.  The matching structure of permutations must be used essentially.  Our proof combines the spread approximation method of Kupavskii and Zakharov with a finite configuration analysis of small partial permutations.

We now introduce the notation used throughout the paper.  A set $S\subset [n]\times[n]$ is called a partial permutation if it is contained in some permutation $\sigma\in\Sigma_n$.  For $\mathcal F\subset\Sigma_n$ and a partial permutation $T$, define
\[
        \mathcal F(\overline T)=\{F\in\mathcal F: F\cap T=\emptyset\},
        \qquad
        \mathcal F[T]=\{F\in\mathcal F: T\subset F\},
        \qquad
        \mathcal F(T)=\{F \backslash T: T\subset F\in\mathcal F\}.
\]
If $S$ and $T$ are disjoint sets of pairs, we also write
\[
        \mathcal F[S,\overline T]
        =\{F\in\mathcal F:S\subset F,\ F\cap T=\emptyset\}.
\]
For a family $\mathcal B$ of partial permutations, define
\[
        \mathcal F[\mathcal B]=\bigcup_{B\in\mathcal B}\mathcal F[B].
\]

\begin{defn}\label{def:t-diversity}
Let $\mathcal F\subset\Sigma_n$.  The $t$-diversity of $\mathcal F$ is defined as
\[
        \gamma_t(\mathcal F)=
        \min_{\substack{T\text{ is a partial permutation}\\ |T|=t}}
        |\mathcal F(\overline T)|.
\]
\end{defn}

The $\gamma_t(\mathcal F)$ measures the distance from such a family to a family with a transversal of size $t$, where a transversal is a set intersecting with every elements in $\mathcal{F}$.  Our first main theorem gives the sharp upper bound for this parameter.

\begin{theorem}\label{thm:main-bound}
Let $t\ge1$, and let $\mathcal F\subset\Sigma_n$ be a $t$-intersecting family.  If $n\ge 130(t+1)(8t)^t$, then
\[
        \gamma_t(\mathcal F)\le \sum_{i=0}^{t}(-1)^i\binom{t}{i}(n-2t-i)!.
\]
\end{theorem}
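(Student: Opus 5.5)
We first check that the bound is attained. Let $P$ be a partial permutation of size $3t$. Let $\mathcal F_0=\Sigma_n[\binom{P}{2t}]$ be the family of permutations containing at least $2t$ pairs of $P$. It is $t$-intersecting, since two $2t$-subsets of a $3t$-set share at least $t$ elements. For any $t$-set $T$, the members of $\mathcal F_0$ missing $T$ include those containing $P\setminus T_0$ for a $t$-subset $T_0\subset P$ chosen so that the remaining $2t$ pairs avoid $T$. Counting the permutations that contain a fixed $2t$-set $S$ and avoid a disjoint $t$-set $T'\subset P$ by inclusion--exclusion gives exactly $\sum_{i}(-1)^i\binom ti(n-2t-i)!$. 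So the goal is to show that every $t$-intersecting $\mathcal F$ has some $T$ with $|\mathcal F(\overline T)|$ at most this quantity, which is $(1+o(1))(n-2t)!$.

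The main tool will be the spread approximation method of Kupavskii--Zakharov. For $n\ge 130(t+1)(8t)^t$ we find a family $\mathcal B$ of partial permutations of size at most $q$ (with $q$ around $2t$ or $3t$) and a remainder $\mathcal F'$ such that:
\begin{itemize}
\item $\mathcal F\setminus \mathcal F'\subset \Sigma_n[\mathcal B]$;
\item $\mathcal B$ is $t$-intersecting;
\item $|\mathcal F'|$ is at most a tiny fraction $\ll (n-2t)!$, coming from the $r$-spreadness of $\Sigma_n$ with $r\approx n/(8t)$.
\end{itemize}
The $t$-intersection of $\mathcal B$ uses the standard argument: if $B_1,B_2$ shared fewer than $t$ elements, spreadness would yield $F_1\supset B_1$ and $F_2\supset B_2$ in $\mathcal F$ meeting only in $B_1\cap B_2$. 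We then minimise $\mathcal B$ and pass to its minimal elements.

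Next comes a case analysis on $\tau_t(\mathcal B)$, the minimum size of a partial permutation $T$ meeting every $B\in\mathcal B$ in at least $t$ pairs.
\begin{itemize}
\item If $\mathcal B$ contains a $t$-set $T$, then every $B\in\mathcal B$ contains $T$, since a $t$-set $t$-intersects another set only by being contained in it. Hence $|\mathcal F(\overline T)|\le |\mathcal F'|$, which is tiny.
\item Otherwise every $B$ has size at least $t+1$. Choose $T$ to be a $t$-subset of a suitable $B_0\in\mathcal B$. The members of $\mathcal F(\overline T)$ lie in $\Sigma_n[B\setminus T]$ for the $B$ with $B\cap T$ small. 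Any $B$ disjoint from $T$ must meet $B_0\setminus T$ in at least $t$ pairs, which forces it to have size and structure bounded in terms of $B_0$.
\item If some $B\in\mathcal B$ disjoint from $T$ has size at most $2t-1$, it contributes about $(n-2t+1)!$. To handle this we must show such $T$ can be chosen optimally. The key claim is that if the minimal sets in $\mathcal B$ have size at most $2t-1$ and no $t$-star exists, then some $T$ makes $\mathcal F(\overline T)$ of order $O(t^{c})(n-2t-1)!$.
\end{itemize}

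The extremal count then arises from the configuration in which the relevant generators are $2t$-sets pairwise $t$-intersecting and disjoint from every $t$-set. Bounding the number of such $2t$-sets disjoint from $T$, and showing that at most one survives outside the $3t$-sunflower structure, gives $|\mathcal F(\overline T)|\le (n-2t)!-\Theta(t)(n-2t-1)!$ plus error. The precise constant $\sum(-1)^i\binom ti(n-2t-i)!$ comes from the exact count in the $3t$ and (for $t=2$) Fano-type configurations.

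The main obstacle is this finite configuration analysis. We must classify $t$-intersecting families of $2t$-subsets of partial permutations that admit no $t$-transversal of the right kind, and show that for each such family some $t$-set $T$ leaves at most one $2t$-generator disjoint from it, together with enough forbidden pairs to produce the alternating sum. We would first try a "kernel" argument: take two generators $B_1,B_2$ with $|B_1\cap B_2|=t$, and show every other generator lies in $B_1\cup B_2$, a set of size $3t$. Then choose $T\subset B_1\cup B_2$ to maximise the number of generators hit. Any remaining generators lying outside $B_1\cup B_2$ then force the Fano exception, which can be checked by hand for $t=2$. Finally, we must ensure the spread error terms are absorbed. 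This is where the hypothesis $n\ge130(t+1)(8t)^t$ enters, since the second-order term $t(n-2t-1)!$ must dominate the error.
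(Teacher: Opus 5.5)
Your outline has two genuine gaps, and they sit exactly where the paper's proof does its real work.

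\textbf{Large generators.} With $q$ bounded (around $2t$ or $3t$) and $r\approx n/(8t)$, the spread approximation only guarantees $|\mathcal F'|\lesssim r^{q+1}(n-q-1)!\approx(8t)^{-(q+1)}n!$. In the stated range of $n$ this is far larger than $(n-2t)!\approx n!/n^{2t}$. To make the remainder $O((n-2t-1)!)$ you must take $q$ of order $t\log n$; the paper uses $q=\lceil(2t+1)\log_{1.3}n\rceil$. Then $\mathcal B$ contains generators of every size up to $q$, and passing to inclusion-minimal members does not control them. For example, all $i$-element partial permutations containing a fixed $(t+1)$-set form a $t$-intersecting antichain with polynomially many (in $n$) members. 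So the number of generators disjoint from a given $T$ is not bounded in terms of $t$, contrary to your remark that meeting $B_0\setminus T$ in $t$ pairs bounds their size and structure.

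The missing idea is the iterated pseudo-sunflower reduction. Suppose the current family (sets of size at most $i$) contains a pseudo-sunflower with $i+1$ members and center $C$. Then every member $t$-intersects $C$, so the members containing $C$ may be replaced by $C$. The surviving $i$-uniform layers $\mathcal A_i$, $2t<i\le q$, are pseudo-sunflower-free, and F\"uredi's theorem gives $|\mathcal A_i|\le 2\binom it(i-1)^{i-t}$. Their total contribution is therefore at most $4\binom{2t+1}{t}(2t)^{t+1}(n-2t-1)!$. This is where $n\ge130(t+1)(8t)^t$ enters: it makes the whole error at most $2^{-5}(n-2t)!$, i.e.\ small compared with the \emph{leading} term. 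What remains is a core $\mathcal A$ of sets of size at most $2t$. It is $2t$-uniform unless some $t$-set meets all of it. Note that the relevant parameter is the ordinary transversal number, not $\tau_t$.

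\textbf{Error absorption.} The error above is $C_t(n-2t-1)!$ with $C_t\gg t$, so no choice of $n$ lets the second-order term $t(n-2t-1)!$ dominate it. More fundamentally, the configuration term alone already equals $D_t(n)=\sum_i(-1)^i\binom ti(n-2t-i)!$ in the extremal case. So ``exact count plus error'' can only give $D_t(n)+O_t((n-2t-1)!)$; the error must be eliminated, not dominated.

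The paper does this without classifying the core; the $3t$/Fano analysis is used only for the equality cases. From $\tau(\mathcal A)>t$ it extracts $X\sqcup Y$, $Y\sqcup Z$, $X\sqcup Z\in\mathcal A$ and splits on fibre sizes.
\begin{itemize}
\item If $|\mathcal F[X\sqcup Y]|$ and $|\mathcal F[X\sqcup Z]|$ both exceed $(1-1/e+0.001)(n-2t)!$, use the large-fibre lemma. Among permutations containing a $2t$-set $B$, at most a $(1-1/e+o(1))$-fraction meet a fixed set of pairs outside $B$. Hence every $F\in\mathcal F$, remainder included, must $t$-intersect both sets. Therefore $\mathcal F(\overline X)\subseteq\{F:F\cap(X\sqcup Y\sqcup Z)=Y\sqcup Z\}$, a set of size exactly $D_t(n)$.
\item If instead, say, $\mathcal F[X\sqcup Y]$ is small, then $\mathcal F[\mathcal A](\overline Z)\subseteq\mathcal F[X\sqcup Y]$. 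This gives $|\mathcal F(\overline Z)|<(1-1/e+0.001+2^{-5})(n-2t)!<\tfrac23(n-2t)!<D_t(n)$, a constant-factor saving on the leading term that swallows the error.
\end{itemize}
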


For $t=1$, the right-hand side is $(n-3)(n-3)!$, so \Cref{thm:main-bound} covers \Cref{thm:wang-xiao} with a weaker numerical threshold.

Let $\mathcal{T}^*(X,t):=\{F\in\Sigma_n: |F\cap X|=2t\}$ and  $\mathcal{T}(X,t):=\{F\in\Sigma_n: |F\cap X|\ge 2t\}$, where $X$ is a partial permutation  of size $3t$. Note that for the two $t$-intersecting families, we have 
\[
\gamma_t(\mathcal{T}^*(X,t))=\gamma_t(\mathcal{T}(X,t))=\sum_{i=0}^{t}(-1)^i\binom{t}{i}(n-2t-i)!.
\]

We also determine the equality cases. Next, we characterize all $t$-intersecting families with maximum $t$-diversity.

\begin{theorem}\label{thm:all-extremal}
Let $t\ge1$, and let $\mathcal F\subset\Sigma_n$ be a $t$-intersecting family. Suppose that
\[
        \gamma_t(\mathcal F)=\sum_{i=0}^{t}(-1)^i\binom{t}{i}(n-2t-i)!,
\]
and $n\ge 130(t+1)(8t)^t$. Then $\mathcal F$ has one of the following structures.
\begin{enumerate}
    \item There is a partial permutation $X$ of size $3t$ such that
    \[
        \mathcal{T}^*(X,t)
        \subseteq
        \mathcal F
        \subseteq
        \mathcal{T}(X,t).
    \]

    \item $t=2$, and there is a Fano plane $(V,\mathcal L)$ whose point set $V$ is a $7$-point partial permutation such that
    \[
    \begin{aligned}
        &\{F\in\Sigma_n:
            F\supseteq V\setminus L
            \text{ and } |F\cap L|\le 1
            \text{ for some } L\in\mathcal L\}  \\
        &\hspace{2cm}\subseteq
        \mathcal F
        \subseteq
        \{F\in\Sigma_n:
            F\supseteq V\setminus L
            \text{ for some } L\in\mathcal L\}.
    \end{aligned}
    \]

\end{enumerate}
\end{theorem}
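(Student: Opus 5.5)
The proof runs in parallel with \Cref{thm:main-bound} and tracks exactly when each inequality there is tight. First I will apply the spread approximation method of Kupavskii and Zakharov to $\mathcal F$. This yields a family $\mathcal B$ of partial permutations, each of size at most some $q=O(t)$, and a small remainder $\mathcal F'$ such that $\mathcal F\subseteq\mathcal F[\mathcal B]\cup\mathcal F'$. The family $\mathcal B$ is itself $t$-intersecting, and $|\mathcal F'|$ is negligible (of order $n^{-1}(n-2t)!$ times constants) compared with $(n-2t-1)!$. The condition $n\ge130(t+1)(8t)^t$ is designed to make these error terms smaller than the gaps between configurations.

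Since $\gamma_t(\mathcal F)$ is large, $\mathcal B$ cannot have a common $t$-subset. If it did, all of $\mathcal F$ except $\mathcal F'$ would meet a single $t$-set $T$, giving $\gamma_t\le|\mathcal F'|$. Members of $\mathcal B$ of size at least $2t+1$ contribute only $O((n-2t-1)!)$ each. The main term therefore comes from members of size at most $2t$. A $t$-intersecting family of partial permutations of size at most $2t$ with no common $t$-element core must contain many $2t$-sets or smaller sets forming a structure without a $t$-cover. The finite configuration analysis, which I take to be the core of the proof of \Cref{thm:main-bound}, shows the following. Let $\mathcal B_{\le 2t}$ be the members of size at most $2t$ that are minimal. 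Then, up to a lower-order term, $\gamma_t$ is bounded by the count of permutations containing some member of $\mathcal B_{\le2t}$ yet avoiding an optimally chosen $t$-set $T$. The maximum of this quantity is attained only when $\mathcal B_{\le2t}=\binom{X}{2t}$ for a $3t$-set $X$, or, when $t=2$, by the Fano complement configuration.

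For the equality case, I will strengthen this to a stability statement. If $\mathcal B_{\le 2t}$ is not one of the two extremal configurations, then $\gamma_t(\mathcal F)$ falls short of the bound by at least $c(n-2t-1)!$ with $c$ an absolute constant. This beats the error $|\mathcal F'|+O(\sum_{|B|>2t}(n-|B|)!)$, so equality forces $\mathcal B_{\le2t}$ to be extremal. Given $\mathcal B_{\le 2t}=\binom{X}{2t}$, the $t$-intersecting property pins down the rest of $\mathcal F$. Any $F\in\mathcal F$ with $|F\cap X|<2t$ fails to $t$-intersect some permutation $G\supseteq Y$ in $\mathcal F$ for a suitable $Y\in\binom X{2t}$ with $|F\cap Y|<t$. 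This requires that $\mathcal F[Y]$ be nearly all of $\Sigma_n[Y]$, which again follows from the exact equality by a counting argument. Hence $\mathcal F\subseteq\mathcal T(X,t)$. Conversely, every missing $F\in\mathcal T^*(X,t)$ lowers $\gamma_t$: for any $t$-set $T$, the family $\mathcal T^*(X,t)(\overline T)$ is precisely what realizes the value $\sum_i(-1)^i\binom ti(n-2t-i)!$. So $\mathcal T^*(X,t)\subseteq\mathcal F$ follows from exact equality together with an analysis of the $t$-sets attaining the minimum. The Fano case is handled the same way, with $V\setminus L$ (size $4=2t$) playing the role of the $2t$-sets.

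The main obstacle is the stability step: showing that every non-extremal $t$-intersecting configuration of small partial permutations loses a definite fraction $c(n-2t-1)!$. This needs a careful case analysis of minimal non-$t$-coverable $t$-intersecting families of $2t$-sets. I would first reduce to the case where all members of $\mathcal B_{\le2t}$ have size exactly $2t$, by arguing that a member of size $<2t$ is either covered by a $t$-set or forces a $t$-cover. Then I would classify via their union: a union of size $3t$ gives $\binom X{2t}$, and larger unions are excluded except for the Fano case at $t=2$.
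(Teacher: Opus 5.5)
The proposal has a genuine gap at the reduction step, and the classification it relies on is asserted rather than proved.

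\textbf{The reduction step fails.} The failing sentence is ``members of $\mathcal B$ of size at least $2t+1$ contribute only $O((n-2t-1)!)$ each, so the main term comes from members of size at most $2t$.'' Lemma~\ref{lem:spread-approximation} gives no bound on how many members of a given size $\mathcal B$ has. So summing $(n-|B|)!$ over the large members controls nothing, and small members need not exist at all. For example, let $\mathcal B_0$ be the family of all $(2t+1)$-partial permutations containing a member of $\binom X{2t}$. It is $t$-intersecting, satisfies $\Sigma_n[\mathcal B_0]=\mathcal T(X,t)$, and has no member of size $\le 2t$. With the parameters of Lemma~\ref{Counting}, already for $\mathcal F=\mathcal T(X,t)$ every non-spread partial permutation of size at most $2t$ extends to a larger non-spread one, by adding a $2t$-subset of $X$ or one more point of $X$. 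So the first set the procedure selects already has more than $2t$ elements. Your $\mathcal B_{\le 2t}$ can therefore be empty for the very family you want to recognize: the $2t$-uniform core must be \emph{produced} from $\mathcal B$, not read off from it.

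The paper produces it by pseudo-sunflower pruning. For $i=q,q-1,\dots,2t+1$, a pseudo-sunflower of size $i+1$ among the current $i$-sets is replaced by its center $C$. Every member must $t$-intersect $C$, so $t$-intersection survives. F\"uredi's bound (Theorem~\ref{thm:pseudo-sunflower-free}) then limits the surviving $i$-sets to $2\binom it(i-1)^{i-t}$, independent of $n$. Together with the ratio estimate \eqref{eq:star}, this gives $|\mathcal F''|\le 2^{-5}(n-2t)!$. The resulting core $\mathcal A$ typically consists of centers rather than original members of $\mathcal B$.

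\textbf{The classification is mis-aimed and unproved.} No quantitative stability of order $c(n-2t-1)!$ is needed, because the situation is an exact dichotomy:
\begin{itemize}
\item If some $t$-partial permutation meets every member of $\mathcal A$, then $\gamma_t(\mathcal F)\le|\mathcal F''|$, far below $D_t(n)$.
\item If none does, $\mathcal A$ is $2t$-uniform and $\gamma_t(\Sigma_n[\mathcal A])=D_t(n)$ exactly (Lemma~\ref{lem:F=sigma-A}(i)).
\end{itemize}
So every such configuration is extremal, and what is required is a purely combinatorial classification of them. Your sketch (``larger unions are excluded except Fano'') misses two ingredients.
\begin{itemize}
\item The avoidance hypothesis only applies to $t$-sets that are partial permutations. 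You must therefore show $\bigcup\mathcal A$ is a partial permutation before treating $\mathcal A$ as an abstract set system. The paper does this by noting that the member avoiding a $t$-subset of $A\cap B$ contains $A\triangle B$ (Proposition~\ref{prop:union-partial}).
\item The classification itself (Proposition~\ref{prop:A-structure}). Fix $Y\in\mathcal A$. Each $T\in\binom Yt$ determines a unique $R_T$ with $T\cup R_T,\ (Y\setminus T)\cup R_T\in\mathcal A$. For $Q\in\binom Y{t-1}$, the sets $R_{Q\cup\{y\}}$ are either all equal or form $\binom{W_Q}{t}$ for a $(t+1)$-set $W_Q$. If two members meet in more than $t$ points, the constant case occurs and propagates via connectivity of $KG(2t,t-1)$, giving $\mathcal A=\binom{Y\cup R}{2t}$. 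If all pairwise intersections equal $t$, one is forced to $t=2$ and the Fano complements.
\end{itemize}

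\textbf{The inclusions.} The paper avoids your fibre counting for the upper inclusion by passing to a maximal extension, which equals $\Sigma_n[\mathcal A]$. The lower inclusion then follows because $\mathcal F(\overline{X\setminus A})\subseteq\Sigma_n[A,\overline{X\setminus A}]$, a set of size exactly $D_t(n)$. This works for $T=X\setminus A$ specifically, not for an arbitrary $t$-set $T$ as your proposal states.
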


The proof has two main ingredients.  First, we apply the spread approximation lemma of Kupavskii and Zakharov to reduce a large $t$-intersecting family of permutations to a bounded family of partial permutations \cite{KUPAVSKII2024109653}.  A pseudo-sunflower reduction, based on a lemma of F\"uredi \cite{FUREDI1980282}, then reduces the problem to a $2t$-uniform $t$-intersecting family of partial permutations.  Second, we classify the resulting finite configurations.  The only possibilities are the complete $2t$-uniform family on a $3t$-point partial permutation and, in the exceptional case $t=2$, the family of complements of the lines of the Fano plane.

Moreover, it's natural to introduce another version's $t$-diversity, which measures the distance from a $t$-intersecting family to a $t$-star. 
In another word, this parameter measures the distance from such a family to a family with a $t$-transversal of size $t$, where a $t$-transversal is a set $t$-intersecting with every elements in $\mathcal{F}$. Here for convenience, $A$ $t$-intersects with $B$ means $|A\cap B|\ge t$, for $A,B\subseteq [n]$.

\begin{defn}\label{def:t-diversity}
Let $\mathcal F\subset\Sigma_n$.  The $t$-diversity of $\mathcal F$ is
\[
        \gamma_t^{\star}(\mathcal F)=
       \min_{\substack{T\text{ is a partial permutation}\\ |T|=t}}
        |\{F\in \mathcal{F}\colon T \not\subset F\}|
        =|F|-\Delta_t(\mathcal{F})
\]
where
\[
        \Delta_t(\mathcal{F})=\max_{\substack{T\text{ is a partial permutation}\\  |T|=t}}|\{F\in \mathcal{F}\colon T \subset F\}|.
\]
\end{defn}

Our next theorem gives the sharp upper bound and stability results for this version's parameter by using similar proof.

\begin{theorem}\label{thm:main2}
Let $t\geq1$, let $n\geq 6(t+4)^4$, and let $\mathcal{F}\subseteq \Sigma_n$ be $t$-intersecting.  Then
\[
  \gamma_t^{\star}(\mathcal{F})\leq t(n-t-2)(n-t-2)!.
\]
Equality holds if and only if there is a partial permutation $X$ of size $t+2$ such that
\[
  \{F\in\Sigma_n:|F\cap X|=t+1\}\subseteq \mathcal{F}\subseteq \{F\in\Sigma_n:|F\cap X|\geq t+1\}.
\]
\end{theorem}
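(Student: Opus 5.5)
We prove \Cref{thm:main2} by the same two-stage scheme as for $\gamma_t$: first reduce $\mathcal F$ to a bounded family of small partial permutations, then solve a finite extremal problem on that family. We may assume $\gamma_t^\star(\mathcal F)\ge t(n-t-2)(n-t-2)!$, since otherwise there is nothing to prove. In particular $\mathcal F$ is far from every $t$-star. The extremal example $\{F:|F\cap X|\ge t+1\}$ with $|X|=t+2$ has $|\mathcal F|\approx (t+2)(n-t-1)!$. Removing the star of any $t$-subset $T\subset X$ leaves the $F$ that contain a $(t+1)$-subset of $X$ other than those containing $T$, namely the $t$ sets $X\setminus\{x\}$ with $x\in T$, but not all of $X$. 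Inclusion--exclusion gives exactly $t\big((n-t-1)!-(n-t-2)!\big)=t(n-t-2)(n-t-2)!$.

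\textbf{Step 1 (spread approximation).} Apply the Kupavskii--Zakharov spread approximation lemma with spreadness parameter about $(t+4)^2$ and uniformity cutoff $q=t+1$. This produces a family $\mathcal S$ of partial permutations of size at most $t+1$ and a remainder $\mathcal F'=\mathcal F\setminus\mathcal F[\mathcal S]$. The remainder has $|\mathcal F'|\le (n-t-2)!\cdot C(t)$ with $C(t)=O((t+4)^{4})$, which is $o\big((n-t-2)(n-t-2)!\big)$ once $n\ge 6(t+4)^4$. Moreover $\mathcal S$ is $t$-intersecting: two sets $S_1,S_2$ with $|S_1\cap S_2|<t$ would, by spreadness of $\mathcal F[S_i]$, give two members of $\mathcal F$ meeting in fewer than $t$ points. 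Since $\mathcal F$ is far from a $t$-star, the family $\mathcal S$ cannot consist of sets all containing a single $t$-set. Members of size exactly $t$ are therefore limited: if $\mathcal S$ contains a $t$-set $T$, then every other member contains $T$, or else its star $\mathcal F[T]$ would force too few members outside. Hence, up to a negligible error, $\mathcal S$ is a non-trivial $t$-intersecting family of $(t+1)$-element partial permutations.

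\textbf{Step 2 (finite classification).} A non-trivial $t$-intersecting family of $(t+1)$-sets is either contained in $\binom{X}{t+1}$ for some $(t+2)$-set $X$, or it is a ``Hilton--Milner'' type family $\{T\cup\{y\}\}\cup\{$one set avoiding some point of $T\}$. The second shape arises because any two $(t+1)$-sets meeting in $t$ points either share a common $t$-core or all lie in a $(t+2)$-set. In the sunflower case, with $\mathcal S$ given by a core $T$ plus at most a bounded number of extra sets, all but $O(t)\cdot(n-t-2)!$ permutations of $\mathcal F[\mathcal S]$ contain $T$. We then get $\gamma_t^\star\le |\mathcal F\setminus\mathcal F[T]|=O(t^2(n-t-2)!)+|\mathcal F'|$, which is below the bound for $n\ge 6(t+4)^4$. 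In the case $\mathcal S\subseteq\binom{X}{t+1}$, we count directly: $|\mathcal F[\mathcal S]\setminus\mathcal F[T]|$ for the best $t$-subset $T\subset X$ is at most the inclusion--exclusion quantity above, with equality only when $\mathcal S=\binom{X}{t+1}$ and all of $\mathcal F[\mathcal S]$ is present.

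\textbf{Step 3 (absorbing the remainder; the main obstacle).} We must show $\mathcal F'=\emptyset$, i.e.\ $\mathcal F\subseteq\{F:|F\cap X|\ge t+1\}$, and that $\mathcal F$ contains all $F$ with $|F\cap X|=t+1$. For the first, take $G\in\mathcal F$ with $|G\cap X|\le t$. Each $(t+1)$-set $S\subset X$ with $\mathcal F[S]$ large then requires that $G$ meets every $F\supset S$ in $t$ points. For $|G\cap S|\le t-1$ this forces the permutations $F\supset S$ to satisfy additional constraints from $G$, which discards at least a $1-O(t/n)$ fraction of them. Summing over $S$ yields a loss of order $(n-t-2)(n-t-2)!$ against the gain from $G$ of at most $(n-t-2)!$-type terms, contradicting equality. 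The delicate part is making this trade-off quantitative with the stated threshold $n\ge 6(t+4)^4$, using counts of permutations containing a fixed partial permutation and avoiding given pairs. Once $\mathcal F\subseteq\{F:|F\cap X|\ge t+1\}$, the formula $\gamma_t^\star=$ (count of $F$ with $|F\cap X|=t+1$ missing $T$) shows that equality forces every such $F$ to lie in $\mathcal F$. Conversely, these families are $t$-intersecting, since two $(t+1)$-subsets of a $(t+2)$-set share $t$ points, and they attain the bound. This gives the characterization.
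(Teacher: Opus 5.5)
\textbf{Gap in Step~1.} Your Step~1 does not work as stated, and it is the step that produces the polynomial threshold.

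\begin{itemize}
\item The spread approximation lemma cannot be run with spreadness about $(t+4)^2$. Its hypothesis $r_0>2^5\frac{x\tau}{x-1}\log_2(2n)$ forces $r=r_0/\tau>2^5\log_2(2n)$, because Observation~\ref{ob:2} is applied to sets of size $n$. This is unbounded in $n$.
\item With cutoff $q=t+1$, the lemma only bounds the remainder by $\tau^{-(t+2)}n!$, where $\tau=r_0/r=(n-t-1)/(er)$. This is at least $(er)^{t+2}(n-t-2)!\ge\bigl(2^5e\log_2(2n)\bigr)^{t+2}(n-t-2)!$, not $O((t+4)^4)(n-t-2)!$.
\item Such a remainder is negligible against $t(n-t-2)(n-t-2)!$ only when $n$ is exponential in $t$. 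Already for $t=1$, $n=3750$ it is too large by a factor exceeding $10^5$.
\end{itemize}

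The missing idea is the paper's two-stage reduction:
\begin{itemize}
\item Run the lemma with a constant $\tau=1.5$ and $q=\lceil (t+2)\log_\tau n\rceil$. Then $|\mathcal F'|\le\tau^{-1}n^{-(t+2)}n!\le\frac23(n-t-2)!$.
\item Bring the generating family down to sets of size at most $t+1$ by iterated pseudo-sunflower pruning $\mathcal R(\cdot,i)$ for $i=q,q-1,\dots,t+2$.
\item Each removed layer $\mathcal A_i$ is $i$-uniform, $t$-intersecting and free of pseudo-sunflowers of size $i+1$. F\"uredi's bound (\Cref{thm:pseudo-sunflower-free}) then gives $|\mathcal A_i|\le 2\binom it(i-1)^{i-t}$.
\item The costs $|\mathcal A_i|(n-i)!$ decay geometrically. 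The total error is $\rho_t(n-t-2)!$ with $\rho_t=2(t+2)(t+1)^3+\frac23$, polynomial in $t$, and this is what makes a threshold like $6(t+4)^4$ possible.
\end{itemize}

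\textbf{Gap in Step~3.} Step~3 is not a proof, and its key estimate is false. For $F$ uniform in $\Sigma_n[S]$, the number of further agreements with $G$ is approximately Poisson with mean $1$. So when $|G\cap S|=t-1$, the requirement $|F\cap G|\ge t$ removes only about a $1/e$ fraction of $\Sigma_n[S]$, not a $1-O(t/n)$ fraction.

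A constant fraction is enough, and the clean way is the dichotomy based on Lemma~\ref{lem:large-fiber}:
\begin{itemize}
\item If $|\mathcal F[A_x]|<(1-\frac1e+0.001)(n-t-1)!$ for some $x\in X$, take $T\in\binom Xt$ with $x\in T$. This gives $\gamma_t^\star(\mathcal F)\le (t-1)\bigl((n-t-1)!-(n-t-2)!\bigr)+(1-\frac1e+0.001)(n-t-1)!+\rho_t(n-t-2)!$, strictly below the bound.
\item Otherwise every $F\in\mathcal F$, including the remainder, meets each $A_x=X\setminus\{x\}$ in at least $t$ points. Hence $|F\cap X|\ge t+1$ by Lemma~\ref{lem:recognition}, and both the bound and the equality characterization follow from Lemma~\ref{lem:model}.
\end{itemize}
This dichotomy is needed for the inequality itself, not just for equality. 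For $\mathcal S=\binom X{t+1}$ your Step~2 only gives $t(n-t-2)(n-t-2)!+|\mathcal F'|$.

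\textbf{Smaller points in Step~2.}
\begin{itemize}
\item The Hilton--Milner alternative is not a separate case. A $t$-intersecting family of $(t+1)$-sets is either a $t$-star or lies in some $\binom X{t+1}$ with $|X|=t+2$.
\item For $\{X\setminus\{x\}:x\in S\}$ with $3\le|S|\le t+1$, the loss is $|S|-2\le t-1$ full fibres of size about $(n-t-1)!$, not $O(t^2)(n-t-2)!$. The bound still holds there, but for a different reason: $t(n-t-2)(n-t-2)!-(t-1)(n-t-1)!=(n-2t-1)(n-t-2)!$, which dominates the error term.
\end{itemize}
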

For $t=1$, the right-hand side is $(n-2)!-(n-3)!=(n-3)(n-3)!$, so \Cref{thm:main2} also covers \Cref{thm:wang-xiao} with a weaker numerical threshold.

\section{Preliminary results}

We shall use the following spread approximation method introduced by Kupavskii and Zakharov.

 \begin{defn}
We say that a family $\mathcal{F} \subset 2^{[n]}$ is $r$-spread for some $r \ge 1$ if $\frac{|\mathcal{F}(X)|}{|\mathcal{F}|} \le r^{-|X|}$ for any $X \subset [n]$.
\end{defn}

 Let $0\leq p \leq 1$, we call a subset $W$ of $[n]$ $p$-random if each element of $[n]$ is included in $W$ with probability $p$ and independently of others.

 The following spread lemma is the core of the spread approximation method.
 It's a variant due to Stoeckl of the breakthrough result that was proved by Alweiss, Lovett, Wu and Zhang ${\cite{AlweissLovettWuZhang21}}$.

\begin{theorem}${\label{lem:spread-lemma}}$
(Spread Lemma ${\cite{Stoeckl22}}$). If for some $n, k, r, m \ge 1$ and $\delta > 0$ a family $\mathcal{F} \subset \binom{[n]}{\le k}$ is $r$-spread and $W$ is an $m\delta$-random subset of $[n]$, then

$$\Pr[\exists F \in \mathcal{F} : F \subset W] \ge 1 - \left( \frac{2}{\log_2(r\delta)} \right)^m k.$$
\end{theorem}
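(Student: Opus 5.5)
The statement is a direct citation of Stoeckl's refinement of the Alweiss--Lovett--Wu--Zhang sunflower lemma. If I had to prove it, I would follow the ``iterated fragment'' argument: reduce the $m\delta$-random set to $m$ independent rounds, and show that one round shrinks the expected size of a suitably defined leftover fragment by a factor $2/\log_2(r\delta)$. The whole argument is trivial if $r\delta\le 4$, since then the right-hand side is nonpositive. So I assume $r\delta>4$ and set $\varepsilon=2/\log_2(r\delta)<1$.

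\textbf{Reduction to rounds.} Let $W_1,\dots,W_m$ be independent $\delta$-random subsets of $[n]$. Each element lies in $W_1\cup\dots\cup W_m$ independently with probability $1-(1-\delta)^m\le m\delta$. By monotonicity of the event $\{\exists F\in\mathcal F: F\subset W\}$ and the standard coupling, it suffices to prove the bound for $W=W_1\cup\dots\cup W_m$.

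\textbf{Fragments.} For a fixed set $U$ and $F\in\mathcal F$, define the fragment $\phi(F,U)$ as follows. Among all $F''\in\mathcal F$ with $F''\setminus U\subseteq F\setminus U$, choose one minimizing $|F''\setminus U|$, with ties broken by a fixed order, and put $\phi(F,U)=F''\setminus U$. Define $U_i=W_1\cup\dots\cup W_i$. Define a fragment chain by taking $\mathcal G_0=\mathcal F$ and letting $\mathcal G_i$ be the family of sets $\phi(F,U_i)$. Each set in $\mathcal G_i$ has the form $F''\setminus U_i$ for some $F''\in\mathcal F$. If some fragment is empty, the corresponding $F''$ lies in $U_i\subseteq W$. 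It therefore suffices to show that the expected size of the fragment of a fixed (say, the minimal-indexed) member after $m$ rounds is at most $\varepsilon^m k$. Markov's inequality then gives that a nonempty fragment survives with probability at most $\varepsilon^m k$, which is the claim.

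\textbf{One-round estimate (the main step).} Conditioned on $U_{i-1}$, write $S=\phi(F,U_{i-1})$ with $|S|=s$. I would show that $\mathbb E\bigl[|\phi(F,U_i)|\bigr]\le \varepsilon s$. Equivalently, for each $j>\varepsilon s$, the probability that the new fragment has size $\ge j$ is small enough that summing over $j$ gives the bound. This is done by the ALWZ/Stoeckl encoding. Suppose the new fragment $T\subseteq S$ has size $j$. Then the pair $(W_i, T)$ can be recovered from the set $W_i\cup T$ together with a description of $T$ inside $F''$ for a member $F''$ of $\mathcal F$ determined by $W_i\cup T$. Minimality of the fragment forces every $F'\in\mathcal F$ with $F'\setminus U_{i-1}\subseteq W_i\cup T$ to meet $T$ in many points. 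The number of candidates for such $F'$ containing a given $X\subseteq T$ is controlled by $r$-spreadness: $|\mathcal F(X)|\le r^{-|X|}|\mathcal F|$.

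Comparing the measure of $W_i\cup T$ (a set that is slightly larger than a $\delta$-random set) with that of $W_i$ costs a factor roughly $\delta^{-j}$. The counting of $T$ via spreadness gains a factor roughly $r^{-j}2^{s}$. The resulting bound is of order $2^{s}(r\delta)^{-j}$, which is negligible once $j\ge 2s/\log_2(r\delta)$. This yields the contraction factor $\varepsilon$.

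This step is where I expect the real difficulty. The care lies in choosing the tie-breaking and the minimality condition so that the encoding is injective. It also lies in the fact that spreadness is needed for the original family $\mathcal F$ rather than for the fragment families $\mathcal G_i$. For that reason I would always phrase the counting in terms of members of $\mathcal F$ and their traces outside $U_{i-1}$.
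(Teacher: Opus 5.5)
The paper does not prove this statement. It quotes it from Stoeckl's notes and uses it as a black box, so the only question is whether your sketch works. It has a genuine gap at the decisive step.

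You fix a single member $F$ (the minimal-indexed one) and want, conditionally on $U_{i-1}$, the contraction $\mathbb E|\phi(F,U_i)|\le\varepsilon|\phi(F,U_{i-1})|$. For a fixed $F$ this is false. Here is a counterexample.
\begin{itemize}
\item Let $\mathcal F$ consist of all $N^k$ transversals of $[k]\times[N]$ together with one further $k$-set $F_0$ on new points. This $\mathcal F$ is $N$-spread.
\item Take $N\delta=8$ (so $\varepsilon=2/3$), $\delta\le 1/100$ and $k\ge e^{10}$.
\item A $\delta$-random $W_1$ contains a transversal with probability at most $\exp(-ke^{-8.1})<0.01$. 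Otherwise $\phi(F_0,W_1)=F_0\setminus W_1$.
\item Hence $\mathbb E|\phi(F_0,W_1)|>0.98k>\varepsilon k=\varepsilon|\phi(F_0,\varnothing)|$ already in the first round. Yet the average of $|\phi(F,W_1)|$ over $F\in\mathcal F$ is only about $ke^{-8}$.
\end{itemize}

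The reason is visible in the encoding. The parameter $r$ enters only when the starting set $F\supseteq T$ is recovered from $T$ among at most $r^{-j}|\mathcal F|$ candidates. That is a gain only against the $|\mathcal F|$ possible choices of $F$, i.e.\ only after averaging over $F$. With $F$ fixed, the map $W_i\mapsto(W_i\cup T,\ T\subseteq F^*)$ gives nothing better than $\binom{k}{j}\bigl(\frac{1-\delta}{\delta}\bigr)^{j}$. So the factor $r^{-j}2^s$ in your heuristic has no source, and in any case $|F^*|$ is not bounded by $s$.

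Your reduction also aims too high. For the isolated member $F_0$ one has $\mathbb E|\phi(F_0,U_m)|=(1-p)k\cdot\Pr[\text{no transversal lies in }U_m]$, where $p$ is the density of $U_m$. So bounding this by $\varepsilon^m k$ essentially requires a failure probability of order $\varepsilon^m$, a factor $k$ stronger than the lemma.

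The missing idea is to average over a random starting member and to carry the right invariant.
\begin{itemize}
\item Let $F$ be uniform on $\mathcal F$ and independent of $W_1,\dots,W_m$. Put $G_0=F$, $U_0=\varnothing$, $T_0=F$.
\item Let $T_i=G_i\setminus U_i$, where $G_i\in\mathcal F$ minimizes $|G\setminus U_i|$ subject to $G\setminus U_{i-1}\subseteq T_{i-1}\cup W_i$, with fixed tie-breaking. This is well defined because $G_{i-1}$ is admissible.
\item Then $T_i\subseteq T_{i-1}\subseteq F$. Conditionally on $W_1,\dots,W_{i-1}$ one still has $\Pr[X\subseteq T_{i-1}]\le\Pr[X\subseteq F]\le r^{-|X|}$, because $F$ is independent of the earlier rounds. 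This is the precise sense in which spreadness of $\mathcal F$ survives, rather than spreadness of a fragment family or of one member.
\item The one-round estimate must be proved for such a spread random set, averaged over it. It must also take a form that iterates, since the $T_i$ no longer all have size $k$. This yields $\mathbb E|T_m|\le\varepsilon^m k$.
\item Since $T_m\ne\varnothing$ whenever no member of $\mathcal F$ lies in $U_m$, whatever $F$ is, Markov's inequality applied to the averaged quantity gives the lemma.
\end{itemize}

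Two smaller points.
\begin{itemize}
\item The case $r\delta\le 4$ is trivial only when $1<r\delta\le 4$. For $r\delta<1$ the logarithm is negative and the inequality as written is false; for example, $r=1$, $\delta=1/2$, $m=1$ makes the right-hand side $1+2k$. So the lemma must be read with $r\delta>1$; in Observation~\ref{ob:2} one has $r\delta>16$.
\item Splitting into $m$ rounds presupposes that $m$ is an integer, whereas Observation~\ref{ob:2} applies the lemma with $m=\log_2(2k)$.
\end{itemize}
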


\begin{Observation}\label{ob:1}
Let $\mathcal F\subseteq 2^{[n]}$ be a
non-empty $r$-spread family, and let $R\subseteq[n]$ satisfy
$|R|\leq \alpha r$, where $0\leq\alpha<1$. 
Then $\mathcal F(\overline R)$ is $(1-\alpha)r$-spread.
\end{Observation}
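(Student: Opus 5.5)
We need to prove Observation \ref{ob:1}: if $\mathcal F$ is $r$-spread and $|R|\le \alpha r$, then $\mathcal F(\overline R)$ is $(1-\alpha)r$-spread. The plan is to show that removing the sets meeting $R$ costs at most an $\alpha$-fraction of $\mathcal F$, and then to compare link sizes directly.

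\emph{Lower bound on the size.} Every $F\in\mathcal F$ with $F\cap R\neq\emptyset$ contains some singleton $\{x\}$ with $x\in R$. Hence
\[
|\mathcal F\setminus\mathcal F(\overline R)|\le\sum_{x\in R}|\mathcal F(\{x\})|\le |R|\,r^{-1}|\mathcal F|\le \alpha|\mathcal F|,
\]
where the middle inequality is the spreadness condition applied with $|X|=1$. Therefore $|\mathcal F(\overline R)|\ge(1-\alpha)|\mathcal F|$. Since $\alpha<1$ and $\mathcal F$ is non-empty, this quantity is positive, so the spread ratio for $\mathcal F(\overline R)$ is well defined.

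\emph{Comparison of links.} Fix $X\subseteq[n]$. If $X\cap R\neq\emptyset$, then $\mathcal F(\overline R)(X)$ is empty and there is nothing to prove. Otherwise, $\mathcal F(\overline R)(X)\subseteq\mathcal F(X)$, so
\[
\frac{|\mathcal F(\overline R)(X)|}{|\mathcal F(\overline R)|}\le\frac{|\mathcal F(X)|}{(1-\alpha)|\mathcal F|}\le\frac{r^{-|X|}}{1-\alpha}.
\]

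\emph{Conversion to the required form.} We need the right-hand side to be at most $((1-\alpha)r)^{-|X|}=(1-\alpha)^{-|X|}r^{-|X|}$. For $|X|\ge1$ this holds because $(1-\alpha)^{-1}\le(1-\alpha)^{-|X|}$. For $X=\emptyset$ the ratio equals $1$, which also holds. This completes the argument.

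The only step that needs care is the exponent bookkeeping in the last step, together with checking that $\mathcal F(\overline R)\ne\emptyset$. Neither is a genuine obstacle.
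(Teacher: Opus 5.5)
Your proof is correct and is essentially the paper's argument: a union bound over the singletons $x\in R$ gives $|\mathcal F(\overline R)|\ge(1-\alpha)|\mathcal F|$, and a direct link comparison using $(1-\alpha)^{-1}\le(1-\alpha)^{-|X|}$ for $|X|\ge1$ finishes it. You are slightly more explicit than the paper, since you also treat the cases $X\cap R\neq\emptyset$ and $X=\emptyset$ and check that $\mathcal F(\overline R)$ is non-empty.
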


\begin{proof}
Indeed,
\[
|\mathcal{F}(\bar{R})| \geq |\mathcal{F}| - \sum_{x \in R} |\mathcal{F}(x)| \geq |\mathcal{F}| - \frac{|R|}{r} |\mathcal{F}| \geq (1 - \alpha) |\mathcal{F}|.
\]

Then for any \( A \subset [n] \setminus R \) with \( A \neq \emptyset \),

\[
|\mathcal{F}(\bar{R})(A)| \leq |\mathcal{F}(A)| \leq r^{-|A|} |\mathcal{F}| \leq r^{-|A|}(1 - \alpha)^{-1} |\mathcal{F}(\bar{R})| \leq ((1 - \alpha)r)^{-|A|} |\mathcal{F}(\bar{R})|.
\]
\end{proof}

\begin{Observation}\label{ob:2}
Let $\mathcal F,\mathcal G\subseteq\binom{[n]}{\leq k}$ be
non-empty $r$-spread families. If $r>2^5\log_2(2k)$,
then there exist disjoint sets $F\in\mathcal F$ and $G\in\mathcal G$.
\end{Observation}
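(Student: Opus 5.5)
We would prove \Cref{ob:2} by a random two-colouring of the ground set combined with the Spread Lemma (\Cref{lem:spread-lemma}). Partition $[n]$ at random into $W_1\sqcup W_2$ by putting each element into $W_1$ or $W_2$ independently with probability $1/2$. Then $W_1$ and $W_2$ are each $1/2$-random subsets of $[n]$, although they are not independent of each other. If some $F\in\mathcal F$ satisfies $F\subset W_1$ and some $G\in\mathcal G$ satisfies $G\subset W_2$, these two sets are disjoint. It therefore suffices to show that each of the two events fails with probability strictly less than $1/2$. A union bound then shows that both events occur simultaneously with positive probability, and this needs no independence between them.

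To apply \Cref{lem:spread-lemma} to $\mathcal F$ with $W=W_1$, we choose parameters with $m\delta=1/2$. The natural choice is $m=\lceil\log_2(4k)\rceil$ and $\delta=1/(2m)$. The lemma bounds the failure probability by
\[
\left(\frac{2}{\log_2(r\delta)}\right)^m k .
\]
We want $\log_2(r\delta)\ge 4$, i.e.\ $r\delta\ge 16$, i.e.\ $r\ge 32m$. Then the failure probability is at most $2^{-m}k\le 1/4$. The same bound holds for $\mathcal G$ and $W_2$. Hence, with probability at least $1/2$, both $\mathcal F$ has a member inside $W_1$ and $\mathcal G$ has a member inside $W_2$.

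The only real obstacle is matching constants with the hypothesis $r>2^5\log_2(2k)$, since the naive choice above needs $r\ge 32\lceil\log_2(4k)\rceil$, which is slightly larger. To close this gap we take $m=\lceil\log_2(2k)\rceil$, which still gives $m\ge 1$. Then $k2^{-m}\le 1/2$, so we need $2/\log_2(r\delta)$ strictly below $1/2$ by a small margin. Concretely, we aim for failure probability $<1/2$ for each family, which gives success with positive probability. With $\delta=1/(2m)$, the hypothesis gives $r\delta>16\log_2(2k)/m$.

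When $\log_2(2k)$ is an integer, this is $r\delta>16$. Then $\log_2(r\delta)>4$, and the failure probability is $<2^{-m}k=1/2$. In general $m<\log_2(2k)+1$, so the two requirements trade off. If the ceiling loses too much, we fall back to $m=\lfloor\log_2(2k)\rfloor$ or adjust $\delta$. Alternatively, we run the argument with a biased split, where $W_1$ is $p$-random and $W_2=[n]\setminus W_1$ is $(1-p)$-random. Tuning $p$ should recover exactly the threshold $2^5\log_2(2k)$.

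Finally, we note the degenerate cases. If $\emptyset\in\mathcal F$, then $F=\emptyset$ and any $G\in\mathcal G$ are disjoint. The spread condition with $r>1$ already forces members to be non-empty where relevant, so these cases cause no difficulty.
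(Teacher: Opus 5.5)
Your setup (uniform two-colouring, the Spread Lemma with $m\delta=\frac12$ on each colour class, then a union bound) is exactly the paper's. The gap is in the parameter choice. You verify the stated threshold $r>2^5\log_2(2k)$ only when $L:=\log_2(2k)$ is an integer. For other $k$ your last paragraph lists options without checking them, and two of them fail.

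With $m=\lfloor L\rfloor$ you are forced to take $\delta\le 1/(2m)$. The guaranteed bound is then only $\frac12\,2^{L-m}\bigl(1+\tfrac14\log_2(L/m)\bigr)^{-m}$. When $L$ is not an integer this exceeds $\frac12$, since $m\ln\bigl(1+\tfrac14\log_2(L/m)\bigr)\le (L-m)/(4\ln 2)<(L-m)\ln 2$. So for $r$ just above the threshold you get nothing. A biased split makes the bound worse for the colour class with $p<\frac12$, so it cannot help when both families have the same spreadness.

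The paper applies the Spread Lemma as stated there (only $m\ge1$ is required) with the real value $m=\log_2(2k)$ and $\delta=1/(2\log_2(2k))$. Then $\log_2(r\delta)>4$, and each failure probability is $<2^{-m}k=\frac12$. If you want integral $m$, which is a fair concern the paper glosses over, your choice $m=\lceil L\rceil$, $\delta=1/(2m)$ does work, but you must carry out the estimate. Put $\epsilon=m-L\in[0,1)$ and $x=\log_2(m/L)\le \epsilon/(L\ln 2)$. The hypothesis gives $\log_2(r\delta)>4-x$, so the failure probability is below $\frac12\,2^{-\epsilon}(1-x/4)^{-m}$. It remains to check $-m\ln(1-x/4)\le \epsilon\ln 2$. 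Using $-\ln(1-y)\le y/(1-y)$, the left side is at most $\frac{(L+\epsilon)\,\epsilon}{4L\ln 2\,(1-x/4)}$. When $\epsilon>0$ we have $k\ge 3$ and $L>2.5$, hence $\frac{L+\epsilon}{L(1-x/4)}<\frac{1.4}{0.87}<4(\ln 2)^2$, which gives the required inequality.

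Separately, your remark that spreadness forces members to be non-empty is false: $\{\emptyset\}$ is $r$-spread for every $r$. As you note, that case is trivial anyway.
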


\begin{proof}
Color each $i\in [n]$ red or blue uniformly at random. Let $Y_1$ be the red set and $Y_2$ the blue set. Clearly both $Y_1$ and $Y_2$ can be viewed as $\frac{1}{2}$-random subset
of $[n]$. Let $m=\log_2(2k)$ and  $\delta=\frac{1}{2 \log_2(2k)}$. Then $r> 2^5\log_2(2k)$ implies
\[
\log_2(r\delta)>\log_2(2^4)= 4.
\]
Then by \Cref{lem:spread-lemma},
\[
Pr\left(\exists F\subset Y_1 \mbox{ for some }F\in \mathcal{F}\right)  \geq 1-\left(\frac{2}{\log_2(r \delta)}\right)^{\log_2(2k)} k >\frac{1}{2}
\]
and
\[
Pr\left(\exists G\subset Y_2 \mbox{ for some }G\in \mathcal{G}\right) \geq 1-\left(\frac{2}{\log_2(r \delta)}\right)^{\log_2(2k)} k >\frac{1}{2}.
\]
Thus, by the union bound, there exist disjoint sets $F\in \mathcal{F}$, $G\in \mathcal{G}$.
\end{proof}

For $i=1,2,\ldots,n$, let $X_i=\{x_{i1},x_{i2},\ldots,x_{in}\}$ and $X=X_1\cup X_2\cup \ldots\cup X_n$. 
A family $\mathcal{F}\subset \Sigma_n$ can be viewed as an $n$-partite 
$n$-graph on partite sets $X_1,X_2,\ldots,X_n$ with edge set 
$\{\{x_{1\sigma(1)},x_{2\sigma(2)},\ldots,x_{n\sigma(n)}\}:
\sigma\in\mathcal{F}\}$. From this point of view, 
$\mathcal{F}\subset \Sigma_n$ is also a family of $n$-subsets of $X=[n]^2$. Hence we can consider the $t$-diversity of $\Sigma_n$ as the same of ordinary set systems.

Let $\mathcal{F}\subseteq\Sigma_n$ and $r\geq1$.  We say that
a partial permutation $S$ is \emph{$r$-spread in $\mathcal{F}$} if
\[
  |\mathcal{F}(S)|\leq r^{-|S|}|\mathcal{F}|.
\]
Otherwise, we say that $S$ is \emph{not $r$-spread in $\mathcal{F}$}.
We call $S$ \emph{maximal non-$r$-spread in $\mathcal{F}$} if it is not
$r$-spread in $\mathcal{F}$, but every partial permutation $T$ with
$S\subsetneq T$ is $r$-spread in $\mathcal{F}$.

\begin{fact}\label{fact:nonempty-maximal-base}
Let $t\geq1$, and let $\mathcal{F}\subseteq\Sigma_n$ be
$t$-intersecting.  If $r>2^5\log_2(2n)$, then there exists a maximal non-$r$-spread partial permutation in $\mathcal{F}$.  Moreover, every such partial permutation is non-empty.
\end{fact}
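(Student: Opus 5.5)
We need two things: existence of a maximal non-$r$-spread partial permutation, and that every such one is non-empty. Throughout, $\mathcal F$ is non-empty; otherwise the notions are degenerate. Note that $|\mathcal F(\emptyset)|=|\mathcal F|=r^{0}|\mathcal F|$, so $\emptyset$ is $r$-spread. Hence non-emptiness does not come for free: we must exhibit some non-$r$-spread partial permutation, and show that a maximal one is never $\emptyset$.

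For existence, the plan is to show some partial permutation $S$ is not $r$-spread. Then, since partial permutations have size at most $n$, we walk up a chain $S\subsetneq S'\subsetneq\cdots$ of non-$r$-spread partial permutations. This chain terminates, and its top element is maximal non-$r$-spread. Non-emptiness of that element is automatic, since $\emptyset$ is $r$-spread. In fact every non-$r$-spread $S$ is non-empty for the same reason, which gives the "moreover" clause.

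To produce a non-$r$-spread partial permutation, argue by contradiction. Suppose every partial permutation $S$ satisfies $|\mathcal F(S)|\le r^{-|S|}|\mathcal F|$. If $S\subset[n]^2$ is not a partial permutation, then $\mathcal F(S)=\emptyset$, since no permutation contains it. So the inequality holds for all $S\subset[n]^2$, and $\mathcal F$, viewed as a family of $n$-subsets of $[n]^2$, is $r$-spread. Apply \Cref{ob:2} with $\mathcal G=\mathcal F$, $k=n$ and $r>2^5\log_2(2n)$. This gives $F,G\in\mathcal F$ with $F\cap G=\emptyset$, i.e. two permutations agreeing nowhere. That contradicts $t$-intersection with $t\ge1$.

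The only subtle point is this reduction from "spread over partial permutations" to spread over all subsets of $[n]^2$. It is handled by the observation that non-partial-permutation sets have empty link. The rest is bookkeeping.
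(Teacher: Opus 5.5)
Your proof is correct and follows the paper's argument. Like the paper, you assume every partial permutation is $r$-spread, apply \Cref{ob:2} to two copies of $\mathcal F$ with $k=n$ to get two disjoint permutations contradicting $t$-intersection, take a maximal element by finiteness, and note that $\emptyset$ is trivially $r$-spread. Your two additions are correct and are details the paper leaves implicit: sets that are not partial permutations have empty link, so $\mathcal F$ is genuinely $r$-spread as a family of subsets of $[n]^2$; and $\mathcal F$ must be non-empty, since otherwise the statement fails.
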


\begin{proof}
Suppose that every partial permutation is $r$-spread in $\mathcal{F}$. By Observation~\ref{ob:2}, applying to two copies of $\mathcal{F}$ with $k=n$, gives disjoint $F_1,F_2\in\mathcal{F}$.  This is impossible because $\mathcal{F}$ is $t$-intersecting.  Hence some partial permutation
is not $r$-spread in $\mathcal{F}$, and finiteness gives a maximal non-$r$-spread
one.

Finally,
\[
  |\mathcal{F}(\varnothing)|=|\mathcal{F}|=r^0|\mathcal{F}|,
\]
so $\varnothing$ is $r$-spread in $\mathcal{F}$.  Therefore every partial
permutation that is not $r$-spread in $\mathcal{F}$ is non-empty.
\end{proof}

\begin{lemma}[Kupavskii--Zakharov {\cite{KUPAVSKII2024109653}}]\label{lem:spread-approximation}
Let \( n, t \geq 1 \) be integers. Let $\mathcal F\subseteq\Sigma_n$ be a
$t$-intersecting family, let $q$ be a positive integer and let $\tau\ge 1$, $x>1$. Put $r_0=\frac{n-q}{e}$ and
\[
  r_0\geq x\tau q
  \qquad\text{and}\qquad
  r_0>2^5\frac{x\tau}{x-1}\log_2(2n).
\]
Then there exists a $t$-intersecting family \(\mathcal{B}\) of partial permutations, each of size at most \(q\) and \(\mathcal{F}' \subset \mathcal{F}\) such that

\begin{enumerate}
\item for any \(B \in \mathcal{B}\) there is a family \(\mathcal{F}_B \subset \mathcal{F}\) such that \(\mathcal{F}_B(B)\) is $(r_0/\tau)$-spread;
\item \(\mathcal{F} \setminus \mathcal{F}' =\bigcup_{B\in\mathcal B}\mathcal F_B \);
\item $|\mathcal{F}'| \le \tau^{-q-1}|\Sigma_{n}| = \tau^{-q-1} n! $.
\end{enumerate}
\end{lemma}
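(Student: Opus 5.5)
We prove the Kupavskii--Zakharov lemma by an iterative peeling procedure based on maximal non-spread partial permutations. Set $r=r_0/\tau$ and start with $\mathcal F_0=\mathcal F$, $\mathcal B=\emptyset$. At step $i$, if $|\mathcal F_i|\le \tau^{-q-1}n!$ we stop and put $\mathcal F'=\mathcal F_i$. Otherwise we pick a maximal non-$r$-spread partial permutation $B_i$ in $\mathcal F_i$; by \Cref{fact:nonempty-maximal-base} it is non-empty. This needs $r>2^5\log_2(2n)$, which follows from the second hypothesis since $x/(x-1)>1$. We then set $\mathcal F_{B_i}=\mathcal F_i[B_i]$, add $B_i$ to $\mathcal B$, and let $\mathcal F_{i+1}=\mathcal F_i\setminus\mathcal F_{B_i}$.

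Property 1 comes from maximality. For any $Y$ disjoint from $B_i$ with $B_i\cup Y$ a partial permutation,
\[
|\mathcal F_{B_i}(B_i)(Y)|=|\mathcal F_i(B_i\cup Y)|\le r^{-|B_i|-|Y|}|\mathcal F_i|<r^{-|Y|}|\mathcal F_i(B_i)|.
\]
If $B_i\cup Y$ is not a partial permutation, the left side is $0$. So $\mathcal F_{B_i}(B_i)$ is $r$-spread. Property 2 holds by construction, and property 3 holds by the stopping rule. The process terminates because each step removes a non-empty family.

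Next we bound $|B_i|\le q$. Since $|\mathcal F_i(B_i)|\le (n-|B_i|)!$, non-spreadness gives
\[
\tau^{-q-1}n!<|\mathcal F_i|<r^{|B_i|}(n-|B_i|)!.
\]
Suppose $|B_i|=b\ge q+1$. Shrink $B_i$ to any subset $B'$ of size exactly $q+1$. Non-spreadness is not inherited by subsets, so instead we use the counting directly: $|\mathcal F_i(B_i)|\le (n-b)!$ and $n!/(n-b)!\ge (n-b)^b$. This yields
\[
\tau^{-q-1}n!<r^b(n-b)!\le \Bigl(\frac{r}{n-b}\Bigr)^{b}n!.
\]
Because $r=(n-q)/(e\tau)$, this contradicts the inequality once $b$ is controlled, provided $b$ is not much larger than $q$. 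For large $b$ we need $(n-b)!\le n!(e/n)^b$-type estimates, using $n!/(n-b)!\ge (n/e)^b$-ish bounds only for $b\le n$. We expect to handle this by a cleaner route: apply the stopping test at the level of $\tau^{-b}$ and use $r^b(n-b)!\le \tau^{-b}((n-q)/e)^b(n-b)!\le\tau^{-b}n!$ for $b\ge q+1$. This is exactly where $r_0=(n-q)/e$ is tuned, and the sub-estimate $((n-q)/e)^b(n-b)!\le n!$ is the routine calculation.

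The main obstacle is that $\mathcal B$ must be $t$-intersecting. Suppose $B_i,B_j$ with $i<j$ satisfy $|B_i\cap B_j|<t$. Consider $\mathcal G_1=\mathcal F_{B_i}(B_i)$ and $\mathcal G_2=\mathcal F_{B_j}(B_j)$, both $r$-spread. Delete from each the sets meeting $R=B_i\cup B_j$, or more precisely the elements of $[n]^2$ sharing a row or column with $B_i\cup B_j$, so that any union stays consistent. There are at most $2n\cdot 2q$ such elements, but weighted by spreadness only the at most $2q$ pairs in $R$ plus the row/column conflicts matter. We use the permutation structure here, since a set containing $B_j$ automatically avoids conflicts with $B_j$.

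The plan is to apply \Cref{ob:1} with $|R|\le 2q\le r/x$, which is guaranteed by $r_0\ge x\tau q$ up to a constant we absorb. This gives $(1-1/x)r$-spread families. Then \Cref{ob:2}, using $(1-1/x)r>2^5\log_2(2n)$ from the second hypothesis, produces disjoint $G_1\in\mathcal G_1$ and $G_2\in\mathcal G_2$. The corresponding permutations $G_1\cup B_i$ and $G_2\cup B_j$ in $\mathcal F$ then intersect in $|B_i\cap B_j|<t$ points, a contradiction. The delicate point is ensuring that intersections via $B_i\cap G_2$ or $B_j\cap G_1$ are excluded. This is exactly what removing $R$ achieves, and it is why the factor $x/(x-1)$ appears.
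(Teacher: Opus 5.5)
The overall scheme matches the paper's. You peel off $\mathcal F_i[B_i]$ for maximal non-$r$-spread bases, get spreadness of $\mathcal F_{B_i}(B_i)$ from maximality, and get the $t$-intersection property from Observations~\ref{ob:1} and~\ref{ob:2}. Your stopping rule is reversed, and that is fine. You stop once $|\mathcal F_i|\le\tau^{-q-1}n!$ and deduce $|B_i|\le q$; the paper stops when $|B_i|>q$ and deduces the size bound. Both rest on the same chain: for $b=|B_i|\ge q+1$,
\[
\tau^{-q-1}n!<|\mathcal F_i|<r^{b}(n-b)!=\tau^{-b}\Bigl(\frac{n-q}{e}\Bigr)^{b}(n-b)!\le\tau^{-b}n!\le\tau^{-q-1}n!,
\]
using $n!/(n-b)!\ge(n/e)^b$ for $0\le b\le n$. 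State this cleanly and drop the false starts (shrinking $B_i$, the bound $(n-b)^b$).

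The step that fails as written is in the $t$-intersection argument. You apply Observation~\ref{ob:1} with $|R|\le 2q\le r/x$ ``up to a constant we absorb''. Nothing can be absorbed: $x$ is fixed, and the hypotheses give only $q\le r/x$. With $\alpha=2/x$ you get $(1-2/x)r$-spread families, and $r>2^5\frac{x}{x-1}\log_2(2n)$ only gives $(1-2/x)r>2^5\frac{x-2}{x-1}\log_2(2n)$, which is below $2^5\log_2(2n)$. For $x\le 2$, Observation~\ref{ob:1} does not apply at all.

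The fix, which is what the paper does, is to note that members of $\mathcal G_1=\mathcal F_{B_i}(B_i)$ already avoid $B_i$. So you only need to remove $B_j\setminus B_i$ from $\mathcal G_1$ (at most $q\le r/x$ points), and symmetrically $B_i\setminus B_j$ from $\mathcal G_2$. Observation~\ref{ob:1} with $\alpha=1/x$ then gives $(1-1/x)r>2^5\log_2(2n)$, which is exactly where the factor $x/(x-1)$ comes from.

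Drop the idea of deleting row/column conflicts. The contradiction only needs $G_1\cap G_2=G_1\cap B_j=G_2\cap B_i=\varnothing$, and no union has to be a partial permutation. Deleting the $\Theta(nq)$ conflicting points would also far exceed the $\alpha r$ budget.

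Finally, you only treat $i<j$. You must also rule out a single base with $|B_i|<t$; this follows by applying Observation~\ref{ob:2} to two copies of $\mathcal G_1$.
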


\begin{proof}
Put $r=\frac{r_0}{\tau}.$

In the first stage, let $\mathcal{F}^1=\mathcal{F}$ and $\mathcal{B}=\varnothing$.  The second
hypothesis gives
\[
  r>2^5\frac{x}{x-1}\log_2(2n)>2^5\log_2(2n).
\]
Since $\mathcal{F}^1$ is $t$-intersecting, Fact~\ref{fact:nonempty-maximal-base} gives a non-empty maximal non-$r$-spread partial permutation $B_1$ in $\mathcal{F}^1$.  If $|B_1|>q$, then stop and put $\mathcal{F}'=\mathcal{F}^1$. Otherwise, put 
\[
  \mathcal{F}_{B_1}=\mathcal{F}^1[B_1],
  \qquad
  \mathcal{F}^2=\mathcal{F}^1\setminus\mathcal{F}_{B_1},
\]
and add $B_1$ to $\mathcal{B}$.

Continue in the same way.  At stage $i\geq2$, if
$\mathcal{F}^i=\varnothing$, stop and put $\mathcal{F}'=\varnothing$.  Otherwise
Fact~\ref{fact:nonempty-maximal-base} gives a maximal non-empty non-$r$-spread partial permutation $B_i$ in $\mathcal{F}^i$.
If $|B_i|>q$, stop and put $\mathcal{F}'=\mathcal{F}^i$.  If not, put
\[
  \mathcal{F}_{B_i}=\mathcal{F}^i[B_i],
  \qquad
  \mathcal{F}^{i+1}=\mathcal{F}^i\setminus\mathcal{F}_{B_i},
\]
and add $B_i$ to $\mathcal{B}$.

We first verify the spread conclusion.  Fix a recorded $B_i$ and let $\varnothing\neq A\subseteq X\setminus B_i$.  If $B_i\cup A$ is
a partial permutation, the maximal non-$r$-spread property of $B_i$
shows that $B_i\cup A$ is $r$-spread in $\mathcal{F}^i$, and hence
\[
  |\mathcal{F}^i(B_i\cup A)|
  \leq r^{-|B_i|-|A|}|\mathcal{F}^i|,
\]
whereas $B_i$ is not $r$-spread in $\mathcal{F}^i$, so
\[
  |\mathcal{F}^i(B_i)|>r^{-|B_i|}|\mathcal{F}^i|.
\]
Consequently,
\[
  \frac{|\mathcal{F}_{B_i}(B_i)(A)|}{|\mathcal{F}_{B_i}(B_i)|}
  =\frac{|\mathcal{F}^i(B_i\cup A)|}{|\mathcal{F}^i(B_i)|}
  <r^{-|A|}.
\]
If $B_i\cup A$ is not a partial permutation, then
$\mathcal{F}_{B_i}(B_i)(A)=\varnothing$.  Thus
$\mathcal{F}_{B_i}(B_i)$ is $r=(r_0/\tau)$-spread.

The peeling construction gives
\[
  \mathcal{F}\setminus\mathcal{F}'
  =\mathop{\dot\bigcup}_{B\in\mathcal{B}}\mathcal{F}_B.
\]
If $\mathcal{F}'=\varnothing$, the remainder bound is immediate.  Otherwise
the procedure stopped with a maximal non-$r$-spread partial permutation
$B_m$ in $\mathcal{F}'$ having size $a=|B_m|\ge q+1$.  Thus
\[
  |\mathcal{F}'(B_m)|>r^{-a}|\mathcal{F}'|,
\]
and $|\mathcal{F}'(B_m)|\leq(n-a)!$.  Therefore
\begin{align*}
  |\mathcal{F}'|
  &<r^a|\mathcal{F}'(B_m)|
   \leq\left(\frac{n-q}{e\tau}\right)^a(n-a)! \\
  &\leq\tau^{-a}n!
   \leq\tau^{-q-1}n!.
\end{align*}
Here we used the standard estimate
$n!/(n-a)!\geq(n/e)^a$.

It remains to show that $\mathcal B$ is $t$-intersecting.
Suppose not and take any (not necessarily distinct) \( B_1, B_2 \in \mathcal{B} \) and assume that \( |B_1 \cap B_2| < t \). 
Let
\[
  \mathcal H_{B_1}:=\mathcal F_{B_1}(B_1),
  \qquad
  \mathcal H_{B_2}:=\mathcal F_{B_2}(B_2),
\]
and delete from each residual family the points belonging to the other base:
\[
  \mathcal G_{B_1}:=\mathcal H_{B_1}(\overline{B_2\setminus B_1}),
  \qquad
  \mathcal G_{B_2}:=\mathcal H_{B_2}(\overline{B_1\setminus B_2}).
\]
Both $\mathcal H_{B_1}$ and $\mathcal H_{B_2}$ are $(r_0/\tau)$-spread, and
\[
  |B_2\setminus B_1|,|B_1\setminus B_2|
  \leq q
  \leq \frac1x\frac{r_0}{\tau}.
\]
By Observation~\ref{ob:1}, with $\alpha=1/x$, we get that both
$\mathcal G_{B_1}$ and $\mathcal G_{B_2}$ are $r'$-spread, where
\[
  r':=\left(1-\frac1x\right)\frac{r_0}{\tau}
  =\frac{x-1}{x\tau}r_0
  >2^5\log_2(2n).
\]
Moreover, both are families of sets of size at most $n$ on the common ground
set $([n]\times[n])\setminus(B_1\cup B_2)$. By Observation~\ref{ob:2}, there are
disjoint sets $G_{1}\in\mathcal G_{B_1}$ and $G_{2}\in\mathcal G_{B_2}$. Then
$G_{1}\cup B_1,G_{2}\cup B_2\in\mathcal F$, whereas
\[
  |(G_{1}\cup B_1)\cap(G_{2}\cup B_2)|=|B_1\cap B_2|<t,
\]
contradicting the $t$-intersection property of $\mathcal F$. Hence
$\mathcal B$ is $t$-intersecting, and the proof is complete.

\end{proof}

\section{The upper bound: Proof of \Cref{thm:main-bound}}\label{sec:upper}

We begin with two elementary counting facts.  If $S$ is a partial permutation of size $a$, then $|\Sigma_n[S]|=(n-a)!$. For convenience, let
\[
        D_t(n)=\sum_{i=0}^{t}(-1)^i\binom{t}{i}(n-2t-i)!.
\]
The following refinement is used repeatedly.

\begin{lemma}\label{lem:counting}
For $n\ge 3t$, let $S$ be a partial permutation of size $2t$, and let $T$ be a $t$-element partial permutation disjoint from $S$. 
\begin{enumerate}[(i)]
    \item If $S\cup T$ is a partial permutation, then
    \[
            |\Sigma_n[S,\overline T]|=D_t(n).
    \]
    \item In general,
    \[
            |\Sigma_n[S,\overline T]|\ge D_t(n).
    \]
\end{enumerate}
\end{lemma}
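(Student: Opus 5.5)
The plan is to prove (i) by inclusion--exclusion and then obtain (ii) by a monotonicity argument that reduces the general case to (i).

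\emph{Part (i).} Suppose $S\cup T$ is a partial permutation, and write $T=\{p_1,\dots,p_t\}$. A permutation $F\supseteq S$ avoids $T$ exactly when it contains none of the $p_j$. For any $J\subseteq[t]$, the set $S\cup\{p_j:j\in J\}$ is a partial permutation of size $2t+|J|$, so exactly $(n-2t-|J|)!$ permutations contain it. Inclusion--exclusion over $J$ then gives
\[
|\Sigma_n[S,\overline T]|=\sum_{i=0}^t(-1)^i\binom ti (n-2t-i)!=D_t(n).
\]

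\emph{Part (ii).} In general, some points of $T$ may conflict with $S$, meaning they share a row or column with a point of $S$. Split $T=T_1\sqcup T_2$, where $T_2$ consists of the points conflicting with $S$.
\begin{itemize}
  \item Every $F\supseteq S$ automatically misses $T_2$, so $\Sigma_n[S,\overline T]=\Sigma_n[S,\overline{T_1}]$.
  \item $S\cup T_1$ is a partial permutation: any two points of $T$ are compatible, and $T_1$ is compatible with $S$.
  \item The same inclusion--exclusion as in (i), with $|T_1|=s\le t$, gives
  \[
  |\Sigma_n[S,\overline{T_1}]|=D_s(n):=\sum_{i=0}^{s}(-1)^i\binom si (n-2t-i)!.
  \]
\end{itemize}
So it suffices to show that $s\mapsto D_s(n)$ is non-increasing for $0\le s\le t$.

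For the monotonicity, interpret $D_s(n)$ combinatorially: it counts permutations of the $(n-2t)$-element complement of $S$ (rows and columns not covered by $S$) that avoid $s$ prescribed compatible points. Adding one more forbidden compatible point can only shrink this set, since the new count is a subset of the old one. Hence $D_{s+1}(n)\le D_s(n)$, and so $D_s(n)\ge D_t(n)$.

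The only care needed is that $n\ge 3t$ keeps all factorials $(n-2t-i)!$ defined, and that there is room to place $t$ compatible points outside $S$, so the combinatorial interpretation makes sense. I expect no real obstacle; the key observation is that conflicting points of $T$ are automatically avoided.
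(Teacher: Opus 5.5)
Your proof is correct and follows essentially the same route as the paper: inclusion--exclusion for (i), and for (ii) discarding the points of $T$ that conflict with $S$ (they are avoided automatically) and then using that forbidding fewer compatible positions only enlarges the set. The only difference is cosmetic. The paper extends the compatible part of $T$ in one step to a $t$-element set compatible with $S$ and applies (i). You instead show $D_s(n)$ is non-increasing in $s$ one point at a time, which is the same monotonicity argument.
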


\begin{proof}
After fixing $S$, the remaining choices form a permutation on $n-2t$ rows and columns.  If $S\cup T$ is a partial permutation, the condition of avoiding $T$ is the condition of avoiding $t$ specified positions in this remaining permutation.  Inclusion--exclusion gives
\[
        |\Sigma_n[S,\overline T]|
        =\sum_{i=0}^{t}(-1)^i\binom{t}{i}(n-2t-i)!.
\]
In the general case, some elements of $T$ are incompatible with $S$ and hence are automatically avoided by every permutation containing $S$.  The compatible part of $T$ has size at most $t$ and can be extended, in the remaining rows and columns, to a partial permutation of size $t$.  Avoiding fewer specified positions can only increase the number of permutations, so the same expression gives a lower bound.
\end{proof}

\begin{lemma}\label{lem:large-fibre}
Let $t\ge1$ and $n\ge 2t+6$.  Let $\mathcal G\subseteq\Sigma_n$ be a $t$-intersecting family, and let $B$ be a partial permutation of size $2t$.  If
\[
        |\mathcal G[B]|\ge \left(1-\frac1e+0.001\right)(n-2t)!,
\]
then every $G\in\mathcal G$ satisfies $|G\cap B|\ge t$.
\end{lemma}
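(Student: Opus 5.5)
Suppose, for contradiction, that some $G\in\mathcal G$ has $|G\cap B|=s\le t-1$. Every member of $\mathcal G[B]$ must share at least $t$ pairs with $G$. It shares exactly $s$ pairs with $G$ inside $B$. Hence it must contain at least $t-s\ge 1$ pairs of $G\setminus B$ that lie outside the rows and columns used by $B$. Write $C\subseteq G\setminus B$ for the set of pairs of $G$ whose row and column are both free of $B$. Since $B$ occupies $2t$ rows and $2t$ columns, and $G$ is a permutation, at least $n-2s-2(2t-s)=n-4t$ pairs of $G$ survive. More precisely, $|C|\ge n-4t+s$, and this is at least $n-4t\ge 6-2t$. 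I will use $C$ only through the count of permutations on the residual board.

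The plan is to bound $|\mathcal G[B]|$ from above by the number of permutations $\pi$ of the residual $(n-2t)\times(n-2t)$ board that meet $C$ in at least one position. Here $C$ is a partial permutation of size $m=|C|$ on that board. By inclusion--exclusion, the number of residual permutations avoiding $C$ is
\[
\sum_{i=0}^{m}(-1)^i\binom{m}{i}(n-2t-i)!.
\]
So the number meeting $C$ equals $(n-2t)!$ times $\sum_{i\ge1}(-1)^{i+1}\binom{m}{i}\frac{(n-2t-i)!}{(n-2t)!}$. When $m=n-2t$ (the full case), this is the classical derangement count. In that case the fraction of permutations meeting $C$ is $1-\sum_{i=0}^{N}(-1)^i/i!$ with $N=n-2t\ge 6$. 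This fraction is at most $1-1/e+1/7!<1-1/e+0.001$, which contradicts the hypothesis.

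For general $m\le N$ I need monotonicity. The number of permutations of an $N\times N$ board meeting a fixed partial permutation of size $m$ is nondecreasing in $m$, since adding a cell can only enlarge the set of permutations that meet it. So I may extend $C$ to a full perfect matching of the residual board. This gives
\[
|\mathcal G[B]|\le (n-2t)!\Bigl(1-\sum_{i=0}^{N}\frac{(-1)^i}{i!}\Bigr).
\]
It remains to verify the numerical bound: the alternating tail satisfies $\bigl|\sum_{i=0}^{N}(-1)^i/i!-e^{-1}\bigr|\le 1/(N+1)!\le 1/5040<0.001$ for $N\ge 6$. This is exactly where $n\ge 2t+6$ enters.

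The main obstacle, though it is mild, is making the reduction rigorous. Every member of $\mathcal G[B]$ must meet $C$ (it needs $t-s\ge1$ extra common pairs, and those cannot lie in the rows or columns of $B$ because the member contains $B$). I also have to identify $\mathcal G[B]$ injectively with permutations of the residual board. Once that is in place, the derangement estimate finishes the proof.
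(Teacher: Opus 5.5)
Your proof is correct and follows essentially the same route as the paper: every member of $\mathcal G[B]$ must meet the $B$-compatible part of $G\setminus B$ on the residual $(n-2t)\times(n-2t)$ board; you then extend that partial permutation to a full one by monotonicity and finish with the derangement estimate for $n-2t\ge 6$. Your lower bound $|C|\ge n-4t+s$ plays no role in the argument and can be dropped.
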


\begin{proof}
Suppose that some $G_0\in\mathcal G$ satisfies $|G_0\cap B|\le t-1$.  Every member of $\mathcal G[B]$ contains $B$ and must $t$-intersect with $G_0$.  Hence every member of $\mathcal G[B]$ must contain at least one point of $G_0\setminus B$ that is compatible with $B$.

After relabelling, assume $B=\{(1,1),\ldots,(2t,2t)\}$.  Once $B$ is fixed, the remaining choices form a permutation on $m=n-2t$ rows and columns.  The compatible part of $G_0\setminus B$ is a partial permutation in this remaining $m\times m$ board, of size at most $m$.  The number of permutations on $m$ points meeting a fixed partial permutation is at most the number meeting a full permutation, namely
\[
        m!\sum_{i=1}^{m}\frac{(-1)^{i-1}}{i!}
        \le
        \left(1-\frac1e+0.0005\right)m!
\]
for all $m\ge6$.  We have $m=n-2t\ge6$, hence
\[
        |\mathcal G[B]|
        \le
        \left(1-\frac1e+0.0005\right)(n-2t)!
        <
        \left(1-\frac1e+0.001\right)(n-2t)!,
\]
contradicting the assumed lower bound on $|\mathcal G[B]|$.
\end{proof}

We collect below the numerical consequences of the lower bound on
$n$ that will be used later. These inequalities allow us to apply
\cref{lem:spread-approximation} and justify the subsequent peeling and rescaling estimates.
\begin{lemma}\label{Counting}
Let $n\ge 130(t+1)(8t)^t, t\ge 1$ and $x=5.2,\tau=1.3,r_0=\frac{n-q}{e},q=\lceil (2t+1)\log_{\tau} n\rceil$, then
\begin{enumerate}[(i)]
    \item $r_0\geq x\tau q$ and $r_0>2^5\frac{x\tau}{x-1}\log_2(2n)$,
 
    \item $2^{-5}(n-2t)\ge 4\binom{2t+1}{t}(2t)^{t+1}+\frac{10}{13}$ and $n\ge(4e+1)q$.
\end{enumerate}
\end{lemma}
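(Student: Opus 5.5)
This is a purely numerical lemma, so the plan is to reduce every inequality to a comparison between a polynomial in $\log n$ and $n$ itself. Then we use $n\ge N_t:=130(t+1)(8t)^t$ to win by a wide margin.

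First record the basic quantities. With $\tau=1.3$ we have $\ln\tau\approx0.262$, so
\[
q\le (2t+1)\frac{\ln n}{\ln 1.3}+1\le 3.82(2t+1)\ln n+1 .
\]
Also $\frac{x\tau}{x-1}=\frac{6.76}{4.2}<1.61$ and $x\tau=6.76$. Since $n-q\ge n/2$ will follow from (ii), we get $r_0\ge n/(2e)$ once that is known; to avoid circularity, I will prove the second part of (ii) first.

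For the second part of (ii), $n\ge(4e+1)q$, it suffices to show
\[
n\ge 11.9\bigl(3.82(2t+1)\ln n+1\bigr),
\]
that is, roughly $n\ge 46(2t+1)\ln n+12$. The function $n/\ln n$ is increasing for $n\ge e$, so it is enough to check this at $n=N_t$. There $\ln N_t=\ln 130+\ln(t+1)+t\ln(8t)$, which is of order $t\ln t$, while $N_t$ grows like $(8t)^t$. For $t=1$ we have $N_1=2080$ and $\ln N_1\approx7.64$, so the right side is about $46\cdot3\cdot7.64+12\approx1067<2080$. For larger $t$ the ratio $N_t/\bigl((2t+1)\ln N_t\bigr)$ only grows, which I would confirm by an induction in $t$ or a crude derivative bound.

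For (i), use $n-q\ge n(1-\tfrac{1}{4e+1})$. Then $r_0\ge 0.916\,n/e\ge0.337n$, and $x\tau q=6.76q\le 6.76\,n/(4e+1)\approx0.57n/1.0$. This needs care: $6.76/11.87\approx0.57>0.337$, so the bound via (ii) alone is not enough. Instead, compare directly, with $0.337n\ge 6.76\bigl(3.82(2t+1)\ln n+1\bigr)$, i.e. $n\ge 77(2t+1)\ln n+20$. This holds at $t=1$ since $77\cdot3\cdot7.64+20\approx1785<2080$, and it improves with $t$. The second inequality of (i), $0.337n>2^5\cdot1.61\log_2(2n)\approx74.3\log_2(2n)$, amounts to $n>221\log_2(2n)$. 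This is far weaker: at $n=2080$ the right side is about $221\cdot12.02\approx2657$. That is tight and actually fails, so there I would use the sharper bound $n-q\ge n-\tfrac{n}{11.87}$ together with exact constants. I expect this numerical check at $t=1$ to be the main obstacle, and it may require evaluating precisely. If it still fails, the intended reading is presumably that $q$ uses the logarithm computed with exact constants and that $N_t$ is large enough for $t\ge2$; for $t=1$, a direct computation at $n=2080$ is what I would try first.

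For the first part of (ii), use $\binom{2t+1}{t}\le 2^{2t+1}$, which gives
\[
4\binom{2t+1}{t}(2t)^{t+1}\le 8\cdot4^t(2t)^{t+1}=16t\,(8t)^t .
\]
Hence it suffices that $n-2t\ge 32\bigl(16t(8t)^t+1\bigr)=512t(8t)^t+32$. However, $N_t=130(t+1)(8t)^t$ is smaller than $512t(8t)^t$, so this crude bound is too lossy, and I would bound $\binom{2t+1}{t}$ more tightly. Using $\binom{2t+1}{t}\le 4^t\cdot\tfrac{3}{2\sqrt{\pi t}}$ type estimates, or checking small $t$ exactly, the quantity $4\binom{2t+1}{t}(2t)^{t+1}$ equals $4\cdot3\cdot4=48$ at $t=1$. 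Then $32\cdot48.8\approx1562\le 2078$, which holds. For general $t$, the ratio $\binom{2t+1}{t}(2t)^{t+1}/\bigl((t+1)(8t)^t\bigr)$ is at most about $\tfrac{2t}{(t+1)}\cdot\tfrac{2^{2t+1}}{4^t\sqrt{\pi t}}\cdot\tfrac12$, which is bounded by roughly $1$. This gives $32\cdot4\cdot(\text{that})\le128<130$, with the margin absorbing $2t$ and the additive constant.
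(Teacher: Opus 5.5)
The gap is in the second inequality of (i). Your proposal does not prove it, and in fact declares it false at $t=1$. This comes from an arithmetic slip: $2^5\cdot 1.61\approx 51.5$, not $74.3$. The number $74.3\approx 51.5/\ln 2$ is the coefficient of $\ln(2n)$, but you multiplied it by $\log_2(2n)\approx 12.02$. That inflates the right-hand side by a factor $1/\ln 2\approx 1.44$, and is where the requirement $n>221\log_2(2n)$ and the apparent failure $2657>2080$ come from. Your fallbacks do not fix this: the ``sharper bound'' $n-q\ge n-n/11.87$ is the bound you had already used, and retreating to $t\ge 2$ or reinterpreting $q$ would not prove the lemma as stated.

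With the correct constant your own framework closes. From $n\ge(4e+1)q$ you get $r_0\ge(1-\tfrac{1}{4e+1})\tfrac{n}{e}>0.336\,n$, so it suffices that $0.336\,n>51.6\log_2(2n)$, which follows from $n>154\log_2(2n)$. At $n=2080$ this reads $2080>154\cdot 12.02\approx 1851$. Since $n-154\log_2(2n)$ is increasing for $n\ge 154/\ln 2$, it holds for all $n\ge N_t\ge 2080$. (Concretely, at $t=1$, $n=2080$ one has $q=88$, $r_0\approx 733$ and $2^5\frac{x\tau}{x-1}\log_2(2n)\approx 619$.)

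The paper proceeds slightly differently. It first proves the stronger $n>20q$, using $\log_{1.3}n<0.65\sqrt n$ and, for $t\ge 2$, $\sqrt n\ge 20(2t+1)$. This gives both $r_0\ge 19q/e>x\tau q$ and $r_0>0.95n/e$, and the paper then uses $152\log_2(2n)<n$. Your route through $n\ge(4e+1)q$ plus a direct comparison for $x\tau q$ is equally valid once the arithmetic is corrected.

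A smaller issue concerns the first inequality of (ii). Your estimate $\frac{2t}{t+1}\cdot\frac{2^{2t+1}}{4^t\sqrt{\pi t}}\cdot\frac12$ is not an upper bound for the ratio $\binom{2t+1}{t}\frac{2t}{(t+1)4^t}$. It amounts to $\binom{2t+1}{t}\le 4^t/\sqrt{\pi t}$, which is false already at $t=1$: the estimate gives about $0.56$, while the ratio equals $0.75$. Nor is ``bounded by roughly $1$'' sufficient, since the constants $130$ versus $4\cdot 32=128$ leave less than two percent of slack.

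What you need is an exact inequality such as $\binom{2t+1}{t}\le\frac{t+1}{2t}4^t$, which is what the paper uses. It makes the ratio at most $1$, whence $2^{-5}(n-2t)\ge 4(t+1)(8t)^t+\frac{2(t+1)(8t)^t-2t}{32}$. Then $(t+1)(8t)^t\ge 16t$ makes the last term at least $\frac{30}{32}>\frac{10}{13}$. Likewise, your assertions that the checks at $N_t$ ``only improve with $t$'' should be backed by an explicit estimate, e.g.\ the paper's $\log_{1.3}n<0.65\sqrt n$ together with $\sqrt n\ge 20(2t+1)$ for $t\ge 2$; this part is routine.
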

\begin{proof}
We first prove $n>20q$ needed for both inequalities.

Since $f(u):=0.65\sqrt u-\log_{1.3}u$ is increasing and positive for
$u\ge2080$, we have
\[
  \log_{\tau}n<0.65\sqrt n.                              
\]

For $t=1$, the estimate $n>20q$ follows directly from the above inequality.  
For $t\ge2$,
using $(8t)^t\ge64t^2$ and
$2t+1\le2.5t$, we have
\[
 n\ge24960t^2>2500t^2\ge\bigl(20(2t+1)\bigr)^2.
\]
Thus,
\[
 20q\le 20((2t+1)\log_{\tau}n+1)
   <0.65n+20<n.
\]
Consequently,
\[
  r_0=\frac{n-q}{e}
      \ge\frac{19q}{e}
      >6.76q=x\tau q.
\]

Since $g(u):=u-152\log_2(2u)$ is increasing and positive for
$u\ge2080$, we have \[152\log_2(2n)<n.\]
Thus, 
\[
  2^5\frac{x\tau}{x-1}\log_2(2n)
     <0.34n<\frac{0.95n}{e}\le r_0.
\]
This proves (i).

For $n>20q$, we can easily obtain $n>(4e+1)q$. Using the elementary estimate
\[
  \binom{2t+1}{t}\le\frac{t+1}{2t}4^t
\]
and \((t+1)(8t)^t\ge16t\), for \(n\ge130(t+1)(8t)^t\), we have
\[
\begin{aligned}
  2^{-5}(n-2t)
  &\ge
  4(t+1)(8t)^t
  +\frac{2(t+1)(8t)^t-2t}{32}\\
  &\ge
  4\binom{2t+1}{t}(2t)^{t+1}
  +\frac{30}{32}\\
  &>
  4\binom{2t+1}{t}(2t)^{t+1}
  +\frac{10}{13}.
\end{aligned}
\]
This proves (ii).  
\end{proof}

We now recall the notion of a pseudo-sunflower, introduced in \cite{Frankl22}.

\begin{defn}
Sets $F_0,F_1,\ldots,F_s\subseteq X$ form a pseudo-sunflower of size $s+1$ with center $C$ if $C\subsetneq F_0$ and the sets
\[
        F_0\setminus C,
        F_1\setminus C,
        \ldots,
        F_s\setminus C
\]
are pairwise disjoint.
\end{defn}

The following result was proved by F\"uredi \cite{FUREDI1980282}; see also \cite{Frankl22}.

\begin{theorem}\label{thm:pseudo-sunflower-free}
Let $\mathcal H$ be a $k$-uniform $t$-intersecting family without a pseudo-sunflower of size $s+1$.  Then
\[
        |\mathcal H|\le s^k-\sum_{0 \le j < t} \binom{k}{j}(s-1)^{k-j}.
\]
\end{theorem}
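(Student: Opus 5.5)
The plan is to encode every member of $\mathcal H$ injectively by a word in $[s]^k$, and then to use the $t$-intersecting property to show that a specific set of $\sum_{j<t}\binom{k}{j}(s-1)^{k-j}$ words is never used.

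First I reformulate the hypothesis. Since $\mathcal H$ is $k$-uniform and $C\subsetneq F_0$ forces $|C|<k$, every $F_i\setminus C$ in a pseudo-sunflower is non-empty. Hence pseudo-sunflower-freeness says exactly this: for every set $C$ with $|C|<k$, the link $\mathcal H(C)=\{F\setminus C: C\subset F\in\mathcal H\}$ contains no $s+1$ pairwise disjoint members. This holds even when $C$ is not contained in any member, in which case the link is empty.

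Next, fix a linear order on the ground set and on $\mathcal H$, and run a branching process that assigns to each $F\in\mathcal H$ a word $w(F)=(w_1,\dots,w_k)\in[s]^k$ together with a chain $\varnothing=C_0\subset C_1\subset\cdots\subset C_k=F$, where $|C_i|=i$.
\begin{itemize}
\item At step $i$, given $C_{i-1}\subset F$, take a canonical greedy maximal matching $M^{(C_{i-1})}=(M_1,\dots,M_m)$ of $\mathcal H(C_{i-1})$, so that $m\le s$.
\item Since $F\setminus C_{i-1}$ is in the link and the matching is maximal, it meets some $M_j$.
\item The natural choice is to let $w_i$ be the least such $j$ and to add one canonical point of $(F\setminus C_{i-1})\cap M_j$.
\end{itemize}
The difficulty is that one symbol must determine which point of $M_j$ is added. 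I would resolve this the way Füredi and Frankl do: choose the matching inside the link of sets that are lexicographically minimal and built greedily, and add the \emph{first} point of $M_j$ lying in $F$ according to a fixed order adapted to $M_j$. To make this unambiguous, I would apply induction on $k$ to the link $\mathcal H(C_{i-1}\cup\{x\})$, which is $(k-i)$-uniform and still pseudo-sunflower-free. With this recursive formulation, the bound $|\mathcal H(C)|\le\sum_{j}|\mathcal H(C)\text{ restricted to branch }j|\le s\cdot s^{k-|C|-1}$ gives $|\mathcal H|\le s^k$. I expect making the single-symbol-per-step encoding injective to be the main obstacle. If the naive version fails, my fallback is a weighted (Kraft-type) inequality $\sum_{F}s^{-|F\setminus C|}\le 1$ over the link, proved by the same downward induction.

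For the correction term, observe that the first matching set $M_1$ at the root is itself $F_1\setminus\varnothing=F_1$ for some $F_1\in\mathcal H$. Arrange the process so that, whenever possible, branch symbol $1$ corresponds to following a point of $F_1$ along the path. A member $F$ whose word has fewer than $t$ coordinates equal to $1$ would then satisfy $|F\cap F_1|<t$, which contradicts $t$-intersection. The words in $[s]^k$ with exactly $j$ coordinates equal to $1$ number $\binom{k}{j}(s-1)^{k-j}$. Excluding those with $j<t$ gives exactly $s^k-\sum_{0\le j<t}\binom{k}{j}(s-1)^{k-j}$.

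To make this rigorous, I would order each matching so that the set containing the continuation of $F_1$ is always listed first. That set exists because $F_1\setminus C$ lies in the link whenever $C\subset F_1$. I would also track that symbol $1$ is taken at step $i$ only if the added point lies in $F_1$, so that the count of $1$'s bounds $|F\cap F_1|$ from below by the number of such steps. Verifying this direction of the implication, namely that few $1$'s in the word force a small intersection with $F_1$, is the second delicate point.
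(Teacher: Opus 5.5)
The paper does not prove this statement; it quotes it from F\"uredi and Frankl. Judged on its own, your sketch has a genuine gap, and it is exactly where you suspected.

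The root cause is your reformulation of the hypothesis, which is not equivalent to it. In a pseudo-sunflower only $F_0$ has to contain $C$; the sets $F_1,\dots,F_s$ are arbitrary members. So ``no $s+1$ pairwise disjoint sets in any link $\mathcal H(C)$'' is strictly weaker than the hypothesis, and it is too weak. Take two vertex-disjoint triangles with $k=s=2$. They satisfy your link condition: the matching number is $2$ and every vertex link is two singletons. Yet they have $6>s^k$ edges. They do contain a pseudo-sunflower of size $3$: center $\{v\}$, sets $\{v,a\},\{v,b\}$, plus any edge of the other triangle.

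Since your branching only ever inspects maximal matchings of links, no choice of canonical orders can make it injective into $[s]^k$. Concretely, the estimate $|\text{branch } j|\le s^{k-|C|-1}$ is unjustified. Branch $j$ splits according to which point of $M_j$ is added, so induction only gives $|M_j|\,s^{k-|C|-1}$. That factor $|M_j|\le k-|C|$ is precisely what separates $s^k$ from a $k!\,s^k$-type bound. Pointing to ``the way F\"uredi and Frankl do'' does not supply the missing step, and neither does the Kraft fallback, which is the same downward induction in other words.

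Separately, your correction-term bookkeeping points the wrong way. To discard words with fewer than $t$ ones you need $|F\cap F_1|\le\#\{i:w_i=1\}$. The rule ``symbol $1$ only when the added point lies in $F_1$'' gives the reverse inequality.

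A proof that works inducts on $s$, not on the step, and it uses exactly the petals you discarded.
\begin{enumerate}
\item Fix $A\in\mathcal H$ and split $\mathcal H$ by the trace $C=F\cap A$. For $C\subsetneq A$ let $\mathcal H_C=\{F\setminus C: F\in\mathcal H,\ F\cap A=C\}$; this family is $(k-|C|)$-uniform.
\item Suppose $G_0,\dots,G_{s-1}\in\mathcal H_C$ formed a pseudo-sunflower of size $s$ with center $D$. Then $G_0\cup C,\dots,G_{s-1}\cup C$ together with $A$ form one of size $s+1$ in $\mathcal H$ with center $C\cup D$. This holds because $A\setminus(C\cup D)=A\setminus C\neq\varnothing$ is disjoint from every $G_i$.
\item Here $A$ need not contain $C\cup D$, so this step genuinely needs the pseudo-sunflower formulation rather than the link formulation. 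Hence $\mathcal H_C$ has no pseudo-sunflower of size $s$.
\item The base case is $s=1$: any two distinct members $F,F'$ form a pseudo-sunflower with center $F\cap F'$, so $|\mathcal H|\le 1$. Induction on $s$ then gives $|\mathcal H_C|\le (s-1)^{k-|C|}$.
\item Summing over $C\subsetneq A$ and adding $1$ for $A$ itself gives $\sum_j\binom kj(s-1)^{k-j}=s^k$. This proves the intersection-free bound.
\item For $t$-intersecting $\mathcal H$, run the same split once more. Now only traces with $|C|\ge t$ occur, which yields exactly $s^k-\sum_{j<t}\binom kj(s-1)^{k-j}$.
\end{enumerate}

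In the language of your words: the coordinates are indexed by the points of $A$, and symbol $1$ at $a$ records $a\in F$. The other coordinates are filled recursively from an alphabet of size $s-1$. That is why the excluded words are exactly those with fewer than $t$ ones.
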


\begin{proof}[Proof of \Cref{thm:main-bound}]
We need a simple pruning operation.  Let $\mathcal H$ be a $t$-intersecting family of partial permutations, all of size at most $s$.  If $\mathcal H$ contains a pseudo-sunflower $F_0,F_1,\ldots,F_s$ with center $C\subsetneq F_0$, then every $H\in\mathcal H$ satisfies $|H\cap C|\ge t$.  Indeed, otherwise $H$ would have to meet each of the $s+1$ pairwise disjoint petals $F_i\setminus C$, impossible since $|H|\le s$.  Therefore, after removing all members of $\mathcal H$ that contain $C$ and adding the center $C$, the resulting family remains $t$-intersecting.  Let $\mathcal R(\mathcal H,s)$ denote the final family.  Then $\mathcal R(\mathcal H,s)$ is $t$-intersecting, consists of partial permutations of size at most $s$, and contains no pseudo-sunflower of size $s+1$.  Moreover, every original member of $\mathcal H$ contains some member of the final family or of one of the layers removed during the iteration.

Next, we apply the \cref{lem:spread-approximation}  to $\mathcal F$ with $ x=5.2,\tau=1.3,r_0=\frac{n-q}{e},q=\lceil (2t+1)\log_{\tau} n\rceil$, where all the conditions of the lemma are satisfied by \cref{Counting} $(i)$, obtaining that there exists a $t$-intersecting family $\mathcal B$ of partial permutations, each of size at most $q$, and $\mathcal F'\subseteq\mathcal F$ such that
\[
        \mathcal F=\mathcal F[\mathcal B]\cup\mathcal F'
        \qquad and\qquad
        |\mathcal F'|\le\tau^{-1} n^{-(2t+1)}n!\le \frac{10}{13} (n-2t-1)!.
\]

Set $\mathcal P_q=\mathcal R(\mathcal B,q)$ and let $\mathcal A_q$ be the subfamily of all members of $\mathcal P_q$ of size $q$.  For $i=q-1,q-2,\ldots,2t+1$, after constructing $\mathcal P_{i+1}$ and $\mathcal A_{i+1}$, define
\[
        \mathcal P_i=\mathcal R(\mathcal P_{i+1}\setminus\mathcal A_{i+1},i),
        \qquad
        \mathcal A_i=\{A\in\mathcal P_i: |A|=i\}.
\]
Finally put
\[
        \mathcal A=\mathcal P_{2t+1}\setminus\mathcal A_{2t+1}.
\]
Then $\mathcal A$ is a $t$-intersecting family of partial permutations, each of size at most $2t$.  Also, for each $i\in\{2t+1,\ldots,q\}$, the family $\mathcal A_i$ is $i$-uniform $t$-intersecting and contains no pseudo-sunflower of size $i+1$, so \Cref{thm:pseudo-sunflower-free} gives
\[
        |\mathcal A_i|\le i^i-\sum_{0 \le j < t} \binom{i}{j}(i-1)^{i-j}=\sum_{t \le j \le i} \binom{i}{j}(i-1)^{i-j}.
\]
Since $j\ge t\ge1$,we have 
\[
\frac{\binom{i}{j+1}(i-1)^{i-j-1}}{\binom{i}{j}(i-1)^{i-j}}=\frac{i-j}{j+1}\cdot\frac{1}{i-1}\le\frac{i-t}{t+1}\cdot\frac{1}{i-1}\le \frac{1}{2}.
\]
Hence   $|\mathcal A_i|\le2\binom{i}{t}(i-1)^{i-t}.$

By construction, every $B\in\mathcal B$ contains some member of $\mathcal A\cup\mathcal A_{2t+1}\cup\cdots\cup\mathcal A_q$.  Hence
\[
        \mathcal F[\mathcal B]
        \subseteq
        \mathcal F[\mathcal A]\cup
        \bigcup_{i=2t+1}^{q}\mathcal F[\mathcal A_i].
\]
Let
\[
        \mathcal F''=\bigcup_{i=2t+1}^{q}\mathcal F[\mathcal A_i]\cup\mathcal F'.
\]
Since for all $n\ge (4e+1)q, 2t\le i\le q$, we have
\begin{equation}
\frac{2\binom{i+1}{t}(i)^{i+1-t}(n-i-1)!}
{2\binom{i}{t}(i-1)^{i-t}(n-i)!}
=\frac{i+1}{i+1-t}\cdot i
\left(1+\frac{1}{i-1}\right)^{i-t}\cdot\frac{1}{n-i}
\le \frac{2ei}{n-i}
\le\frac{2eq}{n-q}
\le\frac12.
\tag{$\ast$}\label{eq:star}
\end{equation}
Therefore, by \cref{Counting}$(ii)$, for $n\ge 130(t+1)(8t)^t$
\[
\begin{aligned}
        |\mathcal F''|
        &\le \sum_{i=2t+1}^{q}|\mathcal A_i|(n-i)!+|\mathcal F'| \\
        &\le 4\binom{2t+1}{t}(2t)^{t+1}(n-2t-1)!+\frac{10}{13}(n-2t-1)! \\
        &=(4\binom{2t+1}{t}(2t)^{t+1}+\frac{10}{13})(n-2t-1)! \\
        &\le 2^{-5}(n-2t)!.
\end{aligned}
\]

If there exists a $t$-partial permutation $T$ meeting every member of $\mathcal A$, then $\mathcal F[\mathcal A](\overline T)=\emptyset$, so
\[
        \gamma_t(\mathcal F)
        \le |\mathcal F(\overline T)|
        \le |\mathcal F''|
        <2^{-5}(n-2t)!<D_t(n)
\]
and \Cref{thm:main-bound} follows.  Hence we may assume from now on that
\begin{equation*}
        \text{for every $t$-partial permutation $T$, there exists $A\in\mathcal A$ such that $A\cap T=\emptyset$.}
\end{equation*}
Let us recall the transversal number $\tau(\mathcal F):=\min\bigl\{|T|:\ T\text{ is a transversal of }\mathcal F\bigr\}$, so the condition above is equivalent to $\tau(\mathcal A)>t$.

\begin{claim}\label{claim:2t-uniform}
The family $\mathcal A$ is $2t$-uniform.
\end{claim}

\begin{proof}
Every member of $\mathcal A$ has size at most $2t$.  Since $\mathcal{A}$ is $t$-intersecting, no member has size less than $t$. Now suppose that some $A\in\mathcal A$ has size at most $2t-1$, and choose a $t$-subset $T\subseteq A$.  For any $A'\in\mathcal A$, the $t$-intersection of $A'$ with $A$ gives
\[
        |A'\cap T|+|A'\cap(A\setminus T)|\ge t.
\]
But $|A\setminus T|\le t-1$, so $A'\cap T\ne\emptyset$.  This contradicts $\tau(\mathcal A)>t$.  Thus every member has size exactly $2t$.
\end{proof}

Choose $A\in\mathcal A$ and a $t$-subset $X\subset A$.  Write
\[
        A=X\sq Y,
        \qquad |Y|=t.
\]
By $\tau(\mathcal A)>t$, there is $B\in\mathcal A$ disjoint from $X$.  Since $|A\cap B|\ge t$ and $B\cap X=\emptyset$, we have $A\cap B=Y$.  Write
\[
        B=Y\sq Z,
\]
where $Z$ is a $t$-set disjoint from $A$.  Again by $\tau(\mathcal A)>t$, there is $C\in\mathcal A$ disjoint from $Y$.  The $t$-intersection property gives $C\cap A=X$ and $C\cap B=Z$, so $C\supseteq X\sq Z$.  Since $|C|=2t$, we have
\[
        C=X\sq Z.
\]
Moreover, the same argument shows that $C=X\sq Z$ is the unique member of $\mathcal A$ disjoint from $Y$.  Notice that $X\sq Y\sq Z$ is a partial permutation, because each of $X\sq Y$, $Y\sq Z$, and $X\sq Z$ is a partial permutation.

Assume first that
\begin{equation}\label{eq:both-large}
        |\mathcal F[X\sq Y]|\ge\left(1-\frac1e+0.001\right)(n-2t)!
        \quad\text{and}\quad
        |\mathcal F[X\sq Z]|\ge\left(1-\frac1e+0.001\right)(n-2t)! .
\end{equation}
By Lemma~\ref{lem:large-fibre}, every $F\in\mathcal F$ $t$-intersects with both $X\sq Y$ and $X\sq Z$.  If $F\in\mathcal F(\overline X)$, then
\[
        F\cap(X\sq Y)=Y,
        \qquad
        F\cap(X\sq Z)=Z.
\]
Hence
\[
        F\cap(X\sq Y\sq Z)=Y\sq Z.
\]
By Lemma~\ref{lem:counting},
\[
\begin{aligned}
        \gamma_t(\mathcal F)
        &\le |\mathcal F(\overline X)| \\
        &\le |\{F\in\Sigma_n:F\cap(X\sq Y\sq Z)=Y\sq Z\}| \\
        &=D_t(n).
\end{aligned}
\]

It remains to handle the case where at least one inequality in \eqref{eq:both-large} fails.  By symmetry, suppose
\[
        |\mathcal F[X\sq Y]|<\left(1-\frac1e+0.001\right)(n-2t)!.
\]
Let $F\in\mathcal F[\mathcal A](\overline Z)$.  Then $F$ contains some $A'\in\mathcal A$.  Since $A'$ $t$-intersects with $Y\sq Z$ and $X\sq Z$, while $F$ is disjoint from $Z$, we must have $Y\subseteq F$ and $X\subseteq F$.  Hence $F\supseteq X\sq Y$, and therefore
\[
        \mathcal F[\mathcal A](\overline Z)\subseteq \mathcal F[X\sq Y].
\]
It follows that
\[
\begin{aligned}
        \gamma_t(\mathcal F)
        &\le |\mathcal F(\overline Z)| \\
        &\le |\mathcal F[\mathcal A](\overline Z)|+|\mathcal F''| \\
        &\le |\mathcal F[X\sq Y]|+|\mathcal F''| \\
        &<\left(1-\frac1e+0.001+2^{-5}\right)(n-2t)!.
\end{aligned}
\]
The last expression is smaller than $D_t(n)$ in the present range: indeed, $1-1/e+0.001+2^{-5}<\frac23$, while $n\ge 130(t+1)(8t)^t$ leads to $n-2t>3t$, so by the union bound
\[
        D_t(n)\ge (n-2t)!-t(n-2t-1)!>\frac23 (n-2t)!.
\]
This proves \Cref{thm:main-bound}.
\end{proof}

\section{The extremal families: Proof of \Cref{thm:all-extremal}}\label{sec:all}
We first show that maximal extremal families are exactly the families generated by finite $2t$-uniform configurations with $\tau(\mathcal{A})>t$.

\begin{lemma}\label{lem:F=sigma-A}
Let $t\ge1$ and $n\ge 130(t+1)(8t)^t$.  The following two statements hold.
\begin{enumerate}[(i)]
    \item If $\mathcal A$ is a $2t$-uniform $t$-intersecting family of partial permutations such that $\mathcal A(\overline T)\ne\emptyset$ for every $t$-partial permutation $T$, then $\Sigma_n[\mathcal A]$ is a maximal $t$-intersecting family and
    \[
            \gamma_t(\Sigma_n[\mathcal A])=D_t(n).
    \]
    \item Conversely, if $\mathcal F\subseteq\Sigma_n$ is a maximal $t$-intersecting family with $\gamma_t(\mathcal F)=D_t(n)$, then
    \[
            \mathcal F=\Sigma_n[\mathcal A]
    \]
    for some $2t$-uniform $t$-intersecting family $\mathcal A$ of partial permutations satisfying $\mathcal A(\overline T)\ne\emptyset$ for every $t$-partial permutation $T$.
\end{enumerate}
\end{lemma}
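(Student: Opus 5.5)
For part (i), I would first check that $\Sigma_n[\mathcal A]$ is $t$-intersecting; this is immediate because any two members contain members of $\mathcal A$, and those $t$-intersect. Maximality is next. Suppose $G\in\Sigma_n\setminus\Sigma_n[\mathcal A]$. If $|G\cap A|\le t-1$ for some $A\in\mathcal A$, then $G$ fails to $t$-intersect some permutation containing $A$: extend $A$ to a permutation avoiding the compatible part of $G\setminus A$, which is possible since $n-2t\ge 2$ and a derangement-type count is positive. If instead $G$ $t$-intersects every $A\in\mathcal A$ but contains none, I would run the same argument with a suitable $A$. I expect this to go as follows: since $G\not\supseteq A$, pick $A$ and build $H\supseteq A$ with $H\cap G\subseteq A\cap G$, and hope $|A\cap G|<t$ for some $A$.

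That hope is the delicate point. In the $3t$-point example $\mathcal A=\binom{X}{2t}$, a $G$ with $|G\cap X|=2t$ lies in the family, while $|G\cap X|\le 2t-1$ gives some $2t$-subset $A$ with $|A\cap G|\le t-1$. For a general $\mathcal A$ with $\tau(\mathcal A)>t$, I would prove the following: if $G$ contains no member of $\mathcal A$, then some $A\in\mathcal A$ has $|A\cap G|<t$. Suppose not. Take $T\subseteq G\cap A_0$ of size $t$ for some $A_0$. Then $\tau(\mathcal A)>t$ gives $A_1$ disjoint from $T$, and the three-set structure $X\sqcup Y\sqcup Z$ from the proof of \Cref{thm:main-bound} should force $G$ to contain a member. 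I flag this as the step needing care, and if it fails in general I would restrict to the configurations later classified.

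For the value of $\gamma_t$ in (i), I would take any $t$-partial permutation $T$ and an $A\in\mathcal A(\overline T)$. Then $\Sigma_n[\mathcal A](\overline T)\supseteq\Sigma_n[A,\overline T]$, which has size at least $D_t(n)$ by Lemma~\ref{lem:counting}(ii), so $\gamma_t\ge D_t(n)$. The bound $\gamma_t\le D_t(n)$ is \Cref{thm:main-bound}.

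For part (ii), I would rerun the proof of \Cref{thm:main-bound} on $\mathcal F$ and obtain $\mathcal A$, $\mathcal F''$ with $|\mathcal F''|<2^{-5}(n-2t)!$. Equality excludes the case $\tau(\mathcal A)\le t$, so Claim~\ref{claim:2t-uniform} applies and $\mathcal A$ is $2t$-uniform with $\tau(\mathcal A)>t$. The case where \eqref{eq:both-large} fails is also excluded, because the bound there is strict. Applying the same reasoning to every triple $X,Y,Z$ (any choice of $A$ and $X\subset A$), I expect $|\mathcal F[A]|\ge(1-1/e+0.001)(n-2t)!$ for every $A\in\mathcal A$. Otherwise the argument gives $\gamma_t<D_t(n)$, since every $A\in\mathcal A$ arises as some $X\sqcup Y$. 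Lemma~\ref{lem:large-fibre} then shows every $F\in\mathcal F$ $t$-intersects every $A\in\mathcal A$. Consequently $\mathcal F\cup\Sigma_n[\mathcal A]$ is $t$-intersecting, and maximality gives $\Sigma_n[\mathcal A]\subseteq\mathcal F$.

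For the reverse inclusion, take $F\in\mathcal F$ containing no $A\in\mathcal A$. By the key claim above, $|F\cap A|<t$ for some $A$, which contradicts the previous paragraph. Hence $\mathcal F=\Sigma_n[\mathcal A]$.

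I expect the main obstacle to be that key combinatorial claim: for a $2t$-uniform $t$-intersecting $\mathcal A$ with $\tau(\mathcal A)>t$, any $G$ that $t$-intersects all members of $\mathcal A$ must contain one.
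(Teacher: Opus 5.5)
Most of your plan matches the paper's proof. This covers the lower bound $\gamma_t\ge D_t(n)$ via Lemma~\ref{lem:counting}(ii) and the first case of maximality, which is the content of Lemma~\ref{lem:large-fibre}. It also covers the argument in (ii): a small fibre $|\mathcal F[A]|$ for any $A=X\sqcup Y$ would give $\gamma_t(\mathcal F)\le|\mathcal F[X\sqcup Y]|+|\mathcal F''|<D_t(n)$ through $\overline Z$, and Lemma~\ref{lem:large-fibre} then applies. Quoting \Cref{thm:main-bound} for $\gamma_t(\Sigma_n[\mathcal A])\le D_t(n)$ is legitimate; the paper instead checks directly that members of $\Sigma_n[\mathcal A](\overline X)$ meet $X\sqcup Y\sqcup Z$ exactly in $Y\sqcup Z$.

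However, one step carries both the maximality in (i) and the inclusion $\mathcal F\subseteq\Sigma_n[\mathcal A]$ in (ii): a permutation $G$ that $t$-intersects every member of $\mathcal A$ must contain one. You leave this unproved. You stop at ``should force $G$ to contain a member'' and offer a fallback through the later classification, which you do not carry out. That fallback would need Propositions~\ref{prop:union-partial} and~\ref{prop:A-structure} plus a separate $t=1$ check. For an arbitrary $A_0$, one application of the $X\sqcup Y\sqcup Z$ structure is not yet a contradiction.

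The missing idea is an extremal choice. Take $A_0\in\mathcal A$ minimizing $|G\cap A_0|$; since $G\not\supseteq A_0$, we have $t\le|G\cap A_0|\le 2t-1$. Choose $T\subseteq G\cap A_0$ with $|T|=t$ and write $A_0=T\sqcup Y$. As in the proof of \Cref{thm:main-bound}, $\mathcal A$ contains $A_1=Y\sqcup Z$ and $A_2=T\sqcup Z$. Because $T\subseteq G$ but $A_2\not\subseteq G$, we get $|G\cap Z|\le t-1$, while $|G\cap Y|=|G\cap A_0|-t$. Hence
\[
|G\cap A_1|=|G\cap Y|+|G\cap Z|\le |G\cap A_0|-1,
\]
contradicting minimality. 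Equivalently, your one-step construction strictly decreases $|G\cap A|$, so iterating it must eventually drop below $t$. With this inserted, both parts of your argument go through, and no appeal to the classified configurations is needed.
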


\begin{proof}
We prove (i).  Let $\mathcal F=\Sigma_n[\mathcal A]$.  Since $\mathcal A$ is $t$-intersecting, $\mathcal F$ is $t$-intersecting.  For any $t$-partial permutation $T$, choose $A_T\in\mathcal A$ disjoint from $T$.  By Lemma~\ref{lem:counting},
\[
        |\mathcal F(\overline T)|
        \ge |\Sigma_n[A_T,\overline T]|
        \ge D_t(n).
\]
Thus $\gamma_t(\mathcal F)\ge D_t(n)$.

Choose $A\in\mathcal A$ and a $t$-subset $X\subset A$.  Write $A=X\sq Y$.  As in the proof of \Cref{thm:main-bound}, using the avoidance property inside $\mathcal A$, there are members
\[
        X\sq Y,
        \qquad
        Y\sq Z,
        \qquad
        X\sq Z
\]
of $\mathcal A$.  If $F\in\Sigma_n[\mathcal A](\overline X)$, then $F$ contains some $A'\in\mathcal A$.  Since $A'$ $t$-intersects with both $X\sq Y$ and $X\sq Z$, and $F$ is disjoint from $X$, it follows that $Y\sq Z\subseteq F$.  Hence
\[
        |\mathcal F(\overline X)|
        \le
        |\{F\in\Sigma_n:F\cap(X\sq Y\sq Z)=Y\sq Z\}|
        =D_t(n).
\]
So $\gamma_t(\mathcal F)=D_t(n)$.

It remains to prove maximality.  Suppose that a permutation $P\notin\Sigma_n[\mathcal A]$ $t$-intersects with every member of $\Sigma_n[\mathcal A]$.  Then $\mathcal G=\Sigma_n[\mathcal A]\cup\{P\}$ is $t$-intersecting.  For each $A\in\mathcal A$, the fibre $\mathcal G[A]$ contains all $(n-2t)!$ permutations containing $A$.  By Lemma~\ref{lem:large-fibre}, $P$ $t$-intersects with every $A\in\mathcal A$.

Choose $A_0\in\mathcal A$ minimizing $|P\cap A_0|$.  Since $P$ $t$-intersects with every member of $\mathcal A$, and since $P$ contains no member of $\mathcal A$, we have
\[
        t\le |P\cap A_0|\le 2t-1.
\]
Choose $X\subseteq P\cap A_0$ with $|X|=t$, and write $A_0=X\sq Y$.  As above, there are $A_1=Y\sq Z$ and $A_2=X\sq Z$ in $\mathcal A$.  Since $P\supseteq X$ but $P\not\supseteq A_2$, we have $|P\cap Z|\le t-1$.  Therefore
\[
        |P\cap A_1|
        =|P\cap Y|+|P\cap Z|
        \le |P\cap A_0|-t+(t-1)
        =|P\cap A_0|-1,
\]
contradicting the choice of $A_0$.  Hence $\Sigma_n[\mathcal A]$ is maximal.

We now prove (ii).  Let $\mathcal F$ be maximal and satisfy $\gamma_t(\mathcal F)=D_t(n)$.  Let $\mathcal A$ be the core family constructed in the proof of \Cref{thm:main-bound}.  If some $t$-partial permutation $T$ meets every member of $\mathcal A$, then the first case in the proof of \Cref{thm:main-bound} gives $\gamma_t(\mathcal F)<D_t(n)$, a contradiction.  Thus $\mathcal A(\overline T)\ne\emptyset$ for every $t$-partial permutation $T$.  By Claim~\ref{claim:2t-uniform}, $\mathcal A$ is $2t$-uniform and $t$-intersecting.

We claim that every member of $\mathcal F$ $t$-intersects with every member of $\mathcal A$.  Suppose not.  Then for some $A=X\sq Y\in\mathcal A$, the fibre $\mathcal F[A]$ must have size less than $(1-1/e+0.001)(n-2t)!$; otherwise Lemma~\ref{lem:large-fibre} would force every member of $\mathcal F$ to $t$-intersect with $A$.  Applying the proof of \Cref{thm:main-bound} to the triple $X\sq Y$, $Y\sq Z$, $X\sq Z$ gives
\[
        \gamma_t(\mathcal F)<D_t(n),
\]
a contradiction.  Therefore every $F\in\mathcal F$ $t$-intersects with every $A\in\mathcal A$.

The minimization argument from the preceding paragraph shows more generally that any permutation $P\in\Sigma_n$ which $t$-intersects with every member of $\mathcal A$ must contain some member of $\mathcal A$.  Hence $\mathcal F\subseteq\Sigma_n[\mathcal A]$.  Since $\Sigma_n[\mathcal A]$ is $t$-intersecting by (i), the maximality of $\mathcal F$ implies
\[
        \mathcal F=\Sigma_n[\mathcal A].
\]
This proves (ii).
\end{proof}

It remains to classify the possible finite configurations $\mathcal A$.  We first show that their union is itself a partial permutation.

\begin{prop}\label{prop:union-partial}
Let $\mathcal A$ be a $2t$-uniform $t$-intersecting family of partial permutations such that $\mathcal A(\overline T)\ne\emptyset$ for every $t$-partial permutation $T$.  Then, for any two members $A,B\in\mathcal A$, there exists $C\in\mathcal A$ such that
\[
        A\triangle B\subseteq C.
\]
Moreover, $\bigcup_{A\in\mathcal A}A$ is a partial permutation.
\end{prop}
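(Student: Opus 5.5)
The first assertion follows directly from the avoidance hypothesis, applied with a $t$-set chosen inside $A\cap B$. The second then follows by checking compatibility pairwise.

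\emph{The first assertion.} If $A=B$, then $A\triangle B=\emptyset$ and any member of $\mathcal A$ serves as $C$, so assume $A\neq B$. Since $\mathcal A$ is $t$-intersecting, $|A\cap B|\ge t$. Choose a $t$-subset $T\subseteq A\cap B$. As a subset of the partial permutation $A$, $T$ is itself a $t$-partial permutation. By hypothesis there is $C\in\mathcal A$ with $C\cap T=\emptyset$.

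Now $|A\setminus T|=2t-t=t$, and $C\cap A\subseteq A\setminus T$. The $t$-intersection $|C\cap A|\ge t$ therefore forces $A\setminus T\subseteq C$. In the same way $B\setminus T\subseteq C$. Hence
\[
        C\supseteq (A\cup B)\setminus T\supseteq A\triangle B,
\]
where the last inclusion uses $T\subseteq A\cap B$. This is exactly the argument that produced $X\sq Z$ in the proof of \Cref{thm:main-bound}, now with a general pair $A,B$ in place of $X\sq Y$ and $Y\sq Z$.

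\emph{The second assertion.} A set of pairs in $[n]\times[n]$ is a partial permutation exactly when no two of its distinct elements share a row or a column. So it suffices to show that any two points $p,q\in\bigcup_{A\in\mathcal A}A$ lie in a common member of $\mathcal A$; since that member is a partial permutation, $p$ and $q$ then differ in both coordinates.

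Take $p\in A$ and $q\in B$ with $A,B\in\mathcal A$. There are three cases.
\begin{enumerate}[(a)]
    \item If $p\in B$, then $p$ and $q$ both lie in $B$.
    \item If $q\in A$, then $p$ and $q$ both lie in $A$.
    \item Otherwise $p\in A\setminus B$ and $q\in B\setminus A$, so both lie in $A\triangle B$, which by the first part is contained in some $C\in\mathcal A$.
\end{enumerate}
In every case $p$ and $q$ share a member of $\mathcal A$, so the union is a partial permutation.

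No step presents a real obstacle. The only point needing care is to choose $T$ inside $A\cap B$, so that $T$ is automatically a partial permutation and the avoidance hypothesis applies.
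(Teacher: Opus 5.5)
Your proposal is correct and follows the paper's proof essentially step for step. You pick a $t$-subset $T\subseteq A\cap B$, use the avoidance hypothesis together with $t$-intersection to force $A\setminus T,\,B\setminus T\subseteq C$, and then show that any two points of the union lie in a common member of $\mathcal A$; the paper phrases this last step as a contradiction with a conflicting pair, whereas you give a direct three-case check.
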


\begin{proof}
Choose a partial permutation $T\in \binom{A\cap B}{t}$, there exists $C\in \mathcal{A}$ such that $T\cap C=\emptyset$. Then $C\cap A=A\setminus T\supseteq A\setminus B$ and $C\cap B=B\setminus T\supseteq B\setminus A$. Hence, $A\triangle B\subseteq C$.

Suppose $\bigcup_{A\in\mathcal A}A$ is not a partial permutation.  Then there are two distinct pairs $a=(r,c)$ and $b=(r',c')$ in this union with $r=r'$ or $c=c'$.  Choose $A_1,A_2\in\mathcal A$ with $a\in A_1$ and $b\in A_2$.  Since no partial permutation can contain both $a$ and $b$, we have $a\in A_1\setminus A_2$ and $b\in A_2\setminus A_1$.  By the first part, there exists $A_3\in\mathcal A$ with $A_1\triangle A_2\subseteq A_3$.  Then $A_3$ contains both conflicting pairs $a$ and $b$, contradiction.  Therefore the union is a partial permutation.
\end{proof}

\begin{rmk}
For completeness, we record the elementary connectivity property of the Kneser graph that will be used in the proof below.

Let $Y$ be a $2t$-set. The Kneser graph $KG(2t,t-1)$ is the graph
with vertex set $\binom{Y}{t-1}$, where two vertices are adjacent
if and only if they are disjoint.

Indeed, take any $P,Q\in\binom{Y}{t-1}$. Write
\[
  P\setminus Q=\{p_1,\ldots,p_m\}
  \qquad and \qquad
  Q\setminus P=\{q_1,\ldots,q_m\}
\]
and, for $0\le i\le m$, let $P_{i}:=\bigl(P\setminus\{p_1,\ldots,p_i\}\bigr)\cup\{q_1,\ldots,q_i\}.$
Then $P_0=P$, $P_m=Q$ and the two sets $P_{i-1},P_i$
differ in exactly one element, where we have $|P_{i-1}\cup P_i|=t$.

Since $|Y|=2t$, the set $Y\setminus(P_{i-1}\cup P_i)$ has size $t$. We choose
\[
  H_i\in
  \binom{\,Y\setminus(P_{i-1}\cup P_i)\,}{t-1}.
\]
Then $H_i$ is disjoint from both $P_{i-1}$ and $P_i$, so
$P_0,H_1,P_1,H_2,\ldots,H_m,P_m$
is a path from $P$ to $Q$. Thus $KG(2t,t-1)$ is connected.
\end{rmk}

We now prove the finite structure theorem used in the maximal classification.

\begin{prop}\label{prop:A-structure}
Let $t\ge 2$, and let $\mathcal A\subseteq\binom{X}{2t}$ be a non-empty $t$-intersecting family with $X=\bigcup_{A\in\mathcal A}A$.  Suppose that for every $T\in\binom{X}{t}$, there exists $A\in\mathcal A$ such that $A\cap T=\emptyset$.  Then the following classification holds.
\begin{enumerate}
    \item If there are two distinct sets  $A_1,A_2\in\mathcal{A}$ with $|A_1\cap A_2|>t$, then $|X|=3t$ and $\mathcal A=\binom{X}{2t}$.
    \item If any two distinct $A_1,A_2\in \mathcal{A}$ have intersection of size exactly $t$, then $t=2,|X|=7$ and $\mathcal A$ is the family of complements of the lines of the Fano plane.   If $(X,\mathcal L)$ is the Fano plane, then
    \[
            \mathcal A=\{X\setminus L:L\in\mathcal L\}.
    \]
\end{enumerate}
\end{prop}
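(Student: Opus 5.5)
The plan is to split according to the two cases of the statement. Throughout we use the basic exchange operation from the proof of \Cref{prop:union-partial}. If $A,B\in\mathcal A$ and $T\in\binom{A\cap B}{t}$, then there is $C\in\mathcal A$ with $C\cap T=\emptyset$. Since $|C\cap A|\ge t$ and $C\cap A\subseteq A\setminus T$, which has size $t$, we get $C\cap A=A\setminus T$ exactly, and likewise $C\cap B=B\setminus T$.

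\textbf{Case 2 (all pairwise intersections equal $t$).} Here the argument is algebraic.
\begin{enumerate}
\item Take $A\ne B$ and $T=A\cap B$. The exchange member $C$ contains $A\triangle B$, and $|A\triangle B|=2t=|C|$, so $C=A\triangle B$. Hence $\mathcal V:=\mathcal A\cup\{\emptyset\}$ is closed under symmetric difference, i.e.\ it is an $\mathbb F_2$-linear code of length $|X|$ and some dimension $k$, all of whose nonzero words have weight $2t$.
\item By Bonisoli's theorem on constant-weight linear codes, or directly by an averaging count over $\mathcal V$, every coordinate $x\in X$ defines a nonzero functional $\mathcal V\to\mathbb F_2$. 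Constant weight forces each nonzero functional to occur the same number $m$ of times. So $\mathcal V$ is the $m$-fold replicated simplex code, with $|X|=m(2^k-1)$ and $2t=m2^{k-1}$.
\item The avoidance hypothesis says that every $t$-set of columns lies in a hyperplane, i.e.\ in the zero set of some codeword. We now rule out every case except the Fano plane.
\begin{itemize}
\item If $k=2$, then $t=m$ and the zero sets are the three blocks of size $m$. A $t$-set meeting two blocks gives a contradiction, since $t\ge2$.
\item If $k=3$ and $m\ge2$, a $t$-set with $t=2m\ge4$ can contain three non-collinear columns, which lie in no common hyperplane.
\item If $k\ge4$, then $t=m2^{k-2}\ge k$, so a $t$-set can contain a basis, which again lies in no hyperplane.
\end{itemize}
\item Thus $k=3$, $m=1$ and $t=2$. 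The columns are the seven points of $PG(2,2)$, and the codewords are the complements of its lines. This is exactly the Fano description in the statement.
\end{enumerate}

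\textbf{Case 1 (some $|A_1\cap A_2|>t$).}
\begin{enumerate}
\item For each $U\in\binom{A_1}{t}$, apply the exchange operation with $T=A_1\setminus U$. This gives $C_U\in\mathcal A$ with $C_U\cap A_1=U$, and we put $W_U:=C_U\setminus A_1$, a $t$-set.
\item For complementary $U,V$ we have $|C_U\cap C_V|=|W_U\cap W_V|\ge t$, so $W_U=W_V$.
\item The main obstacle is to show that $W_U$ is the same set $Z$ for all $U$. The remark on the connectivity of $KG(2t,t-1)$ is set up for this, and the extra member $A_2$ is where $|A_1\cap A_2|>t$ must enter; the Fano example shows the conclusion fails without it. What I would try first:
\begin{itemize}
\item For a $(t-1)$-set $P\subseteq A_1$, show that all $C_U$ with $U\supseteq P$ share the same outside part.
\item The seed is the following. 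Since $A_2\setminus A_1$ has fewer than $t$ points, the exchange applied to $A_1,A_2$ with $T\subseteq A_1\cap A_2$ yields members that agree with $A_1$ on more than $t$ points. Exchanging these against the $C_U$ pins down $W_U$ for $U$ in a neighbourhood of some $P$.
\item Then propagate along a Kneser path $P_0,H_1,P_1,\dots$ in $KG(2t,t-1)$. Disjoint $(t-1)$-sets correspond to nearly complementary $t$-sets, and for those the previous step gives equality of the outside parts.
\end{itemize}
\item Once $W_U\equiv Z$, set $Y:=A_1\cup Z$, so $|Y|=3t$. Every $D\in\mathcal A$ must $t$-intersect each $C_U=U\cup Z$. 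Choosing $U\subseteq A_1\setminus D$ when $|D\cap A_1|\le t$ shows $D\supseteq Z$ in that case. A symmetric argument with the roles of $A_1$ and $A_1\setminus U\cup Z$ swapped then gives $D\subseteq Y$, hence $X=Y$.
\item Finally, for every $T\in\binom{Y}{t}$, the member of $\mathcal A$ avoiding $T$ is a $2t$-subset of $Y\setminus T$, which has size $2t$, so it equals $Y\setminus T$. Therefore $\mathcal A=\binom{Y}{2t}$ and $|X|=3t$.
\end{enumerate}
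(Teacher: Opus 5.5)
\paragraph{Case 2.} This part is correct and takes a different route from the paper. You show $C=A\triangle B$, so $\mathcal A\cup\{\emptyset\}$ is a binary linear code whose nonzero weights all equal $2t$. It is therefore an $m$-fold simplex code, and the avoidance hypothesis becomes ``any $t$ coordinate functionals lie in a hyperplane'', i.e.\ $t<k$. With $t=m2^{k-2}$ this leaves only $k=3$, $m=1$.

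The paper instead fixes $Y\in\mathcal A$ and takes $S,T\in\binom Yt$ with $|S\cap T|=t-1$. It computes $|R_S\cap R_T|$ (your $W$'s) twice, once against $T\cup R_T$ and once against $(Y\setminus T)\cup R_T$. This gives $1=t-1$ in two lines, after which the seven sets are written down by hand. Your argument is heavier but explains where the Fano plane comes from. You should add the case $k=1$, i.e.\ $|\mathcal A|=1$, which the avoidance hypothesis excludes at once.

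\paragraph{The gap: Case 1, step 3.} This step is the whole content of Case 1, and you leave it as a plan. Its seed is false: exchanging $A_1,A_2$ along $T\in\binom{A_1\cap A_2}{t}$ gives $C$ with $C\cap A_1=A_1\setminus T$, which has exactly $t$ points, not more.

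What $A_2$ actually gives comes from comparing $A_2$ itself with $C_U$ for $U\supseteq A_1\setminus A_2$. The inequality $|C_U\cap A_2|=t-|A_1\setminus A_2|+|W_U\cap(A_2\setminus A_1)|\ge t$ forces $A_2\setminus A_1\subseteq W_U$. So for $P\in\binom{A_1}{t-1}$ containing $A_1\setminus A_2$, the $t+1$ sets $W_y:=W_{P\cup\{y\}}$ ($y\in A_1\setminus P$) share a common point. That is not equality.

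Your propagation step has the same deficiency. Suppose all $W_{P\cup\{y\}}=R$, and $Q\in\binom{A_1}{t-1}$ is disjoint from $P$, with $A_1=P\sqcup Q\sqcup\{a,b\}$. Complementarity gives only $W_{Q\cup\{a\}}=W_{P\cup\{b\}}=R$ and $W_{Q\cup\{b\}}=W_{P\cup\{a\}}=R$. That is two of the $t+1$ sets around $Q$, not all of them.

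\paragraph{The missing idea.} You need the paper's local dichotomy: the $W_y$ are either all equal, or they are the $t+1$ distinct $t$-subsets of a $(t+1)$-set.
\begin{itemize}
\item They pairwise share at least $t-1$ points. Compare $C_{P\cup\{y\}}$ with $C_{A_1\setminus(P\cup\{z\})}$; their traces on $A_1$ meet only in $y$, and $W_{A_1\setminus U}=W_U$.
\item A common point $u$ forces the first alternative. Take $B\in\mathcal A$ avoiding the $t$-set $P\cup\{u\}$. If $|B\cap A_1|=t$, then your step 2 (which also shows $C_U$ is unique) gives $B=C_{A_1\setminus(P\cup\{y_0\})}$. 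Its outside part is $W_{y_0}\ni u$, a contradiction.
\item Hence $B=(A_1\setminus P)\sqcup S$ with $|S|=t-1$ and $u\notin S$. Since $B$ $t$-intersects each $C_{P\cup\{y\}}$, we get $S\subseteq W_y$, so $W_y=S\cup\{u\}$ for every $y$.
\end{itemize}

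With this lemma the argument closes. The corrected seed gives constancy at one vertex $P$ of $KG(2t,t-1)$. At an adjacent vertex $Q$, the two coincidences above rule out the all-distinct alternative. Connectivity of $KG(2t,t-1)$ then finishes.

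Once that is in place, your steps 4--5 are fine. For $|D\cap A_1|>t$, take $U\supseteq A_1\setminus D$ to get $D\setminus A_1\subseteq Z$.
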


\begin{proof}
Fix $Y\in\mathcal{A}$ and set $U=X\setminus Y$. For
$T\in\binom Yt$, choose a member of $\mathcal{A}$ disjoint from $T$.
Its intersection with $Y$ has size at least $t$ and is contained in
$Y\setminus T$, so it has the form
\[
  (Y\setminus T)\dot\cup R_T,
  \qquad R_T\in\binom Ut.
\]
Doing the same for $Y\setminus T$ and comparing the two resulting
members gives
\begin{equation}\label{eq:complement}
  R_T=R_{Y\setminus T}.
\end{equation}
The same comparison also proves uniqueness. Consequently,
\begin{equation}\label{eq:canonical}
  T\dot\cup R_T\in\mathcal{A},\qquad
  (Y\setminus T)\dot\cup R_T\in\mathcal{A}
  \quad\text{for every }T\in\binom Yt.
\end{equation}

We record the only local structural fact that will be needed. Fix
$Q\in\binom{Y}{t-1}$ and, for $y\in Y\setminus Q$, abbreviate
$R_y=R_{Q\cup\{y\}}$. If $y\ne z$, then comparison in
\eqref{eq:canonical} of
\[
  (Q\cup\{y\})\dot\cup R_y
  \quad\text{and}\quad
  \bigl(Y\setminus(Q\cup\{z\})\bigr)\dot\cup R_z
\]
gives
\begin{equation}\label{eq:local}
  |R_y\cap R_z|\ge t-1.
\end{equation}
We claim that there are exactly two possibilities:
\begin{equation}\label{eq:dichotomy}
  \text{either all the $R_y$ are equal , or }
  \{R_y:y\in Y\setminus Q\}=\binom{W_Q}{t}
\end{equation}
for some $(t+1)$-set $W_Q\subseteq U$.

Indeed, let $L_Q=\bigcap_{y\in Y\setminus Q}R_y$. Suppose first that
$L_Q\ne \varnothing$ and fix $u\in L_Q$, choose $B\in\mathcal{A}$ disjoint from $Q\cup\{u\}$.
Since $B$ meets $Y$ in at least $t$ points and avoids $Q$, we have
$t\le |B\cap Y|\le t+1$. If $|B\cap Y|=t$, then for
$\{y_0\}= Y\setminus (Q\cup (B\cap Y))$, by the uniqueness of $R_{y_0}$,
\[
  B=\bigl(Y\setminus(Q\cup\{y_0\})\bigr)\dot\cup R_{y_0},
\]
contradicting $u\in R_{y_0}$. Hence $B\cap Y=Y\setminus Q$.
Let $B=(Y\setminus Q)\dot\cup S$, where $|S|=t-1$ and $S\subseteq U$. Since $B$ $t$-intersects with $(Q\cup\{y\})\dot\cup R_y$, we have $S\subseteq R_y$ for every $y\in Y\setminus Q$.
Moreover, $u\notin S$ while $u\in R_y$ for every $y$. Thus
$R_y=S\cup\{u\}$ for every $y$, proving the first case in
\eqref{eq:dichotomy}.

If $L_Q=\varnothing$, fix one of the $R_y$, say $R_0$. By
\eqref{eq:local}, every other $R_y$ differs from $R_0$ in at most one
element. $L_Q=\varnothing$ means every element in $R_0$ must be excluded by some $R_y$ which forces the other $t$ indexed sets to be the form
$R_0\setminus\{a\}\cup\{b_a\}$ for $a\in R_0$.
The bound \eqref{eq:local} forces all $b_a$ to be one common
element $b\notin R_0$. Hence the $R_y$ are precisely the $t$-subsets
of $R_0\cup\{b\}$, proving the second case in
\eqref{eq:dichotomy}.

We shall also use the following result. Suppose the
first case of \eqref{eq:dichotomy} holds for $Q$, with common
value $R$. If $P\in\binom{Y}{t-1}$ is disjoint from $Q$, write
\[
  Y=Q\dot\cup P\dot\cup\{a,b\}.
\]
By \eqref{eq:complement},
\[
  R_{P\cup\{a\}}=R_{Q\cup\{b\}}=R,
  \qquad
  R_{P\cup\{b\}}=R_{Q\cup\{a\}}=R.
\]
The sets in the second case of \eqref{eq:dichotomy} are all
distinct, so the first case also holds for $P$, with the same
value $R$. By the connectivity of the Kneser graph $KG(2t,t-1)$, we have
\begin{equation}\label{eq:constant}
  R_T=R\qquad\text{for every }T\in\binom Yt
\end{equation}
as soon as the first case holds for some $Q$.

Suppose first that there exist two distinct members of $\mathcal{A}$ intersecting larger than $t$. We take $Y,Y'\in\mathcal{A}$ so that
$|Y\cap Y'|>t$, hence $|Y\setminus Y'|\le t-1$. Choose $Q\in\binom{Y}{t-1}$ containing
$Y\setminus Y'$. For every $y\in Y\setminus Q$, comparison of
$(Q\cup\{y\})\dot\cup R_y$ with $Y'$ gives
\[
  |R_y\cap(Y'\setminus Y)|
  \ge 2t-|Y\cap Y'|=|Y'\setminus Y|.
\]
Thus $Y'\setminus Y$ is contained in every $R_y$. Since $Y'\setminus Y\neq\varnothing$ and $ Y'\setminus Y\subseteq L_Q$, so the first case in \eqref{eq:dichotomy} holds, and hence \eqref{eq:constant} holds.

We claim that every $A\in\mathcal{A}$ is contained in $Y\cup R$. It's trivial for $|A\cap Y|=t$. If $|A\cap Y|>t$, choose
$Q\in\binom{Y}{t-1}$ containing $Y\setminus A$. For
$y\in Y\setminus Q$, comparison of $A$ with
$(Q\cup\{y\})\dot\cup R$ gives
\[
  |(A\setminus Y)\cap R|
  \ge 2t-|A\cap Y|=|A\setminus Y|,
\]
so $A\setminus Y\subseteq R$. Therefore $X=Y\dot\cup R$ and $|X|=3t$.
On the other hand, for any $B\in\binom{X}{2t}$, for $t$-set $X\setminus B$, the member of $\mathcal{A}$ disjoint from
$X\setminus B$ must be $B$. Hence $\mathcal{A}=\binom{X}{2t}$.

Now suppose that every two distinct members of $\mathcal{A}$ intersect in
exactly $t$ points. Choose $S,T\in\binom Yt$ with
$|S\cap T|=t-1$. Consider the intersection of $S\dot\cup R_S$ and $T\dot\cup R_T$, $S\dot\cup R_S$ and $(Y\setminus T)\dot\cup R_T$, we get
\[
  |R_S\cap R_T|=t-|S\cap T|=1,
  \qquad
  |R_S\cap R_T|=t-|S\cap(Y\setminus T)|=t-1.
\]
Thus $1=t-1$, and therefore $t=2$.

Write $Y=\{1,2,3,4\}$. By \eqref{eq:complement}, there are three
outside $2$-sets
\[
  R_{12}=R_{34}=:R_1,\qquad
  R_{13}=R_{24}=:R_2,\qquad
  R_{14}=R_{23}=:R_3.
\]
Every $A\in\mathcal{A}\setminus\{Y\}$ meets $Y$ in exactly two points and,
by uniqueness, is one of the six sets in \eqref{eq:canonical}.
The intersection of elements in $\mathcal{A}$ gives $|R_i\cap R_j|=1$ for $i\ne j$. Moreover, $R_1\cap R_2\cap R_3=\varnothing$: otherwise, if
$u$ belonged to this intersection, then for any $y\in Y$ the
$2$-set $\{u,y\}$ would intersect with every member of
$\mathcal{A}$, contradiction. Hence $R_1,R_2,R_3$ form a
triangle. After relabelling,
$R_1=\{a,b\}$, $R_2=\{b,c\}$ and $R_3=\{c,a\}$. Consequently
$X=\{1,2,3,4,a,b,c\}$ and 
\[
  \mathcal{A}=\{1234,12ab,34ab,13bc,24bc,14ca,23ca\}.
\]
It's the family of complements of the lines of the Fano plane $\mathcal{L}=\{13a, 23b, 12c, 14b, 34c, 24a, abc\}$. This proves the second assertion.
\end{proof}

If $\mathcal F$ is a maximal extremal family, by Lemma~\ref{lem:F=sigma-A}, $\mathcal F=\Sigma_n[\mathcal A]$ where $\mathcal A$ is a $2t$-uniform $t$-intersecting family of partial permutations satisfying $\tau(\mathcal{A})>t$.  If $t=1$, then $\mathcal A$ is a $2$-uniform intersecting graph with no common vertex; hence it is a triangle, i.e. $\mathcal A=\binom X2$ for a $3$-point partial permutation $X$.

Assume $t\ge2$.  By Proposition~\ref{prop:union-partial}, the set $X=\bigcup_{A\in\mathcal A}A$ is a partial permutation. Hence every $t$-subset of $X$ is a $t$-partial permutation. Proposition~\ref{prop:A-structure} now gives exactly the two configurations stated in the theorem, from where we know the structure of maximal extremal family.

\begin{proof}[Proof of \Cref{thm:all-extremal}]
Let $\mathcal F\subseteq\Sigma_n$ be $t$-intersecting and $n\ge 130(t+1)(8t)^t$,  suppose that $\gamma_t(\mathcal F)=D_t(n)$.  Extend $\mathcal F$ to a maximal $t$-intersecting family $\mathcal F_{\max}$.  Since $t$-diversity is monotone under inclusion,
\[
        D_t(n)=\gamma_t(\mathcal F)
        \le \gamma_t(\mathcal F_{\max}).
\]
By \Cref{thm:main-bound}, $\gamma_t(\mathcal F_{\max})\le D_t(n)$, so equality holds for $\mathcal F_{\max}$.  Applying Lemma~\ref{lem:F=sigma-A}, we have
\[
        \mathcal F\subseteq\mathcal F_{\max}=\Sigma_n[\mathcal A],
\]
where $\mathcal A$ is one of the two finite configurations described in Proposition~\ref{prop:A-structure}.

\item Case 1: $\mathcal A=\binom X{2t}$ for a partial permutation $X$ of size $3t$.

Then
\[
        \Sigma_n[\mathcal A]=\mathcal{T}(X,t)=\{F\in\Sigma_n:|F\cap X|\ge2t\},
\]
so the desired upper inclusion holds.

Then we prove the lower inclusion.  For each $A\in\binom X{2t}$, put $T_A=X\setminus A$.  If $F\in\mathcal F(\overline{T_A})$, then $F\in\Sigma_n[\mathcal A]$ and $F$ is disjoint from $T_A$.  Hence the only possible $2t$-subset of $X$ contained in $F$ is $A$, and therefore
\[
        \mathcal F(\overline{T_A})
        =\mathcal F[A,\overline{T_A}]
        \subseteq \Sigma_n[A,\overline{T_A}].
\]
By \Cref{lem:counting}, since $A\cup T_A=X$ is a partial permutation,
\[
        |\Sigma_n[A,\overline{T_A}]|=D_t(n).
\]
On the other hand, $|\mathcal F(\overline{T_A})|\ge\gamma_t(\mathcal F)=D_t(n)$.  Thus
\[
        \mathcal F[A,\overline{T_A}]=\Sigma_n[A,\overline{T_A}]
\]
for every $A\in\binom X{2t}$.  Taking the union over all such $A$ gives
\[
        \mathcal{T}^*(X,t)=\{F\in\Sigma_n:|F\cap X|=2t\}
        \subseteq\mathcal F.
\]
By monotonicity, any family lying between the two families also attains the extremal diversity.
This proves the first structure in \Cref{thm:all-extremal}.

\item Case 2: $t=2$ and $\mathcal A=\{V\setminus L:L\in\mathcal L\}$ for a Fano plane $(V,\mathcal L)$.

The upper inclusion is
\[
        \mathcal F\subseteq\Sigma_n[\mathcal A]
        =\{F\in\Sigma_n:F\supseteq V\setminus L\text{ for some }L\in\mathcal L\}.
\]
We prove the lower inclusion.  Fix a line $L=\{a,b,c\}$.  For the pair $P=\{a,b\}$, the unique member of $\mathcal A$ disjoint from $P$ is $V\setminus L$, because $a$ and $b$ lie on the unique Fano line $L$.  Hence, using $\mathcal F\subseteq\Sigma_n[\mathcal A]$,
\[
        \mathcal F(\overline P)
        =\mathcal F[V\setminus L,\overline P]
        \subseteq
        \Sigma_n[V\setminus L,\overline P].
\]
The set $(V\setminus L)\cup P$ is a partial permutation of size $6$, so Lemma~\ref{lem:counting} gives
\[
        |\Sigma_n[V\setminus L,\overline P]|=D_2(n).
\]
Since $|\mathcal F(\overline P)|\ge\gamma_2(\mathcal F)=D_2(n)$, equality holds and
\[
        \mathcal F[V\setminus L,\overline P]
        =\Sigma_n[V\setminus L,\overline P].
\]
The same argument applies to the pairs $\{b,c\}$ and $\{c,a\}$.  Taking the union over the three pairs on $L$, we obtain
\[
        \{F\in\Sigma_n:F\supseteq V\setminus L\text{ and }|F\cap L|\le1\}
        \subseteq\mathcal F.
\]
Finally take the union over all Fano lines $L\in\mathcal L$.  This gives
\[
        \{F\in\Sigma_n:
        F\supseteq V\setminus L\text{ and }|F\cap L|\le1
        \text{ for some }L\in\mathcal L\}
        \subseteq\mathcal F,
\]
as required.
\end{proof}

\section{The another version of $t$-diversity: Proof of \Cref{thm:main2}}

\subsection{The upper bound}

We first calculate the star $t$-diversity of the proposed construction. 
Let
\[
\mathcal{C}(X,t)=\{F\in\Sigma_n:|F\cap X|= t+1\}
\qquad
\mathcal{A}(X,t)=\{F\in\Sigma_n:|F\cap X|\geq t+1\}
\]
where $X$ is a partial permutation of size $t+2$.

\begin{lemma}\label{lem:model}
If $2(n-t-2)\ge t+2$, then
\[
  \gamma_t^{\star}(\mathcal{C}(X,t))
  =\gamma_t^{\star}(\mathcal{A}(X,t))
  =t(n-t-2)(n-t-2)!.
\]
\end{lemma}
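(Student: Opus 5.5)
The plan is to compute $\Delta_t$ for both families by splitting $t$-sets $T$ according to how they sit relative to $X$, and then read off $\gamma_t^{\star}$.

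First, the sizes. Inclusion--exclusion after fixing a $(t+1)$-subset $S\subset X$, as in Lemma~\ref{lem:counting}, gives
\[
|\mathcal C(X,t)|=(t+2)\bigl((n-t-1)!-(n-t-2)!\bigr)=(t+2)(n-t-2)(n-t-2)!.
\]
For the larger family we add the permutations containing all of $X$:
\[
|\mathcal A(X,t)|=|\mathcal C(X,t)|+(n-t-2)!.
\]

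Second, fix a $t$-subset $T\subset X$. There are exactly two $(t+1)$-subsets of $X$ containing $T$, namely $T\cup\{x\}$ and $T\cup\{y\}$ with $X\setminus T=\{x,y\}$. So the members of $\mathcal C(X,t)$ containing $T$ are the permutations containing $T\cup\{x\}$ and avoiding $y$, or vice versa. Hence
\[
|\mathcal C(X,t)[T]|=2(n-t-2)(n-t-2)!,
\]
and therefore
\[
|\mathcal C|-|\mathcal C[T]|=t(n-t-2)(n-t-2)!.
\]
For $\mathcal A(X,t)$ the count containing $T$ gains the extra $(n-t-2)!$ permutations containing $X$, and this gain equals the gain in $|\mathcal A|$. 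So the difference is again $t(n-t-2)(n-t-2)!$.

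Third, and this is the main obstacle, we must show that no $t$-partial permutation $T\not\subset X$ does better. Write $|T\cap X|=t-j$ with $j\ge1$. A member of $\mathcal C[T]$ contains $T$ together with a $(t+1)$-subset $S$ of $X$ with $S\supseteq T\cap X$, and $S\cup T$ must be a partial permutation. There are at most $\binom{j+2}{j+1}=j+2$ such $S$. Each pair $(S,T)$ fixes $|S\cup T|=t+1+j\ge t+2$ points, so it is contained in at most $(n-t-1-j)!$ permutations. This gives
\[
|\mathcal C[T]|\le (j+2)(n-t-1-j)!\le 3(n-t-2)!,
\]
where the last step uses that $(j+2)(n-t-1-j)!$ is non-increasing in $j$ once $n-t-2$ is large enough.

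Fourth, compare with the $T\subset X$ case. The bound $3(n-t-2)!$ is at most $2(n-t-2)(n-t-2)!$ as soon as $2(n-t-2)\ge 3$. The stated hypothesis $2(n-t-2)\ge t+2$ is presumably what the authors need for this monotonicity and comparison; I will use it in the form $n-t-2\ge2$ when $t\ge 2$, and treat $t=1$ directly.

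For $\mathcal A(X,t)$ the same estimate applies. The extra permutations are those containing $X\cup T$, which number at most $(n-t-2-j)!$, a negligible addition. So the maximum of $|\mathcal F[T]|$ is attained at $T\subset X$, and both diversities equal $t(n-t-2)(n-t-2)!$.

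The delicate point is the bookkeeping in the third step. One must handle the case where $T$ conflicts with points of $X$, which only shrinks the counts, and must check monotonicity in $j$ under the given hypothesis rather than a stronger one.
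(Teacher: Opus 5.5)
Your argument is correct. Its outer shape matches the paper's: a $t$-subset $T\subseteq X$ witnesses the value $t(n-t-2)(n-t-2)!$, and one then shows that no $T\not\subseteq X$ does better. The two proofs differ in that last estimate.

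The paper works with the complement and the classes $\mathcal C_x=\{F\in\Sigma_n: F\cap X=X\setminus\{x\}\}$. It bounds $|\mathcal C_x[T]|\le (n-t-2)!$ for \emph{all} $t+2$ values of $x$, which gives $|\mathcal C(X,t)\setminus\mathcal C(X,t)[T]|\ge (t+2)(n-t-3)(n-t-2)!$. Comparing this with $t(n-t-2)(n-t-2)!$ is precisely the hypothesis $2(n-t-2)\ge t+2$. So that crude bound is where the hypothesis comes from, not any monotonicity in $j$.

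You instead notice that $\mathcal C_x[T]=\emptyset$ for $x\in T\cap X$. Hence only the $j+2$ classes with $x\in X\setminus T$ contribute, and each fixes $t+1+j$ points. This yields $|\mathcal C(X,t)[T]|\le 3(n-t-2)!$, and your comparison needs only $n-t-2\ge 2$. So your route proves the lemma under the weaker condition $n\ge t+4$, at the price of a small optimisation in $j$.

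For $\mathcal A(X,t)$ the paper simply uses $\mathcal C(X,t)\subseteq\mathcal A(X,t)$ and monotonicity of $\gamma_t^\star$ under inclusion, which is cleaner than your direct estimate. Yours is also fine, since $|\mathcal A|-|\mathcal A[T]|=|\mathcal C|-|\mathcal C[T]|+(n-t-2)!-|\Sigma_n[X\cup T]|\ge|\mathcal C|-|\mathcal C[T]|$, because $|X\cup T|\ge t+3$.

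There are two loose ends.

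\begin{enumerate}
\item Put $m=n-t-2$ and $g(j)=(j+2)(m+1-j)!$. Then $g$ is not non-increasing on the whole admissible range $1\le j\le\min(t,m+1)$. Indeed $g(j+1)/g(j)=\frac{j+3}{(j+2)(m+1-j)}>1$ at $j=m$, and the value $j=m+1$ is allowed by the hypothesis whenever $n\le 2t+1$ (e.g.\ $t=4$, $n=9$). Your bound still holds: $g$ decreases on $1\le j\le m$, and $g(m+1)=m+3\le 3\,m!$ for $m\ge2$. Terms with $t+1+j>n$ simply vanish.
\item No separate treatment of $t=1$ is needed. For every $t\ge1$, $2(n-t-2)\ge t+2\ge 3$ already forces $n-t-2\ge 2$.
\end{enumerate}
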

\begin{proof}
Take $T\in\binom Xt$.  A member of $\mathcal{A}(X,t)$ (or $\mathcal{C}(X,t)$) fails to contain $T$ precisely when it lies in $\mathcal C_x$ for some $x\in T$, where $\mathcal C_x=\{ F \in \Sigma_n:F\cap X=X\setminus \{x\} \}$.  Hence
\[
  |\mathcal{A}(X,t)\setminus\mathcal{A}(X,t)[T]|
  =|\mathcal{C}(X,t)\setminus\mathcal{C}(X,t)[T]|
  = \sum_{x\in T}|\mathcal C_x|=t(n-t-2)(n-t-2)!.
\]
By the definition of star $t$-diversity, this means 
\[
  \gamma_t^{\star}(\mathcal{A}(X,t))
  \leq t(n-t-2)(n-t-2)!
  \qquad
  \gamma_t^{\star}(\mathcal{C}(X,t))
  \leq t(n-t-2)(n-t-2)!.
\]
It remains to show that no partial permutation $T$ outside $\binom Xt$ gives a smaller value for $\mathcal{C}(X,t)$.  If $T\not\subseteq X$, then for every $x\in X$ either $T\cup(X\setminus\{x\})$ is not a partial permutation or it has at least $t+2$ points.  Consequently
\[
  |\mathcal C_x[T]|\leq(n-t-2)!.
\]
Hence,
\begin{align*}
  |\mathcal{C}(X,t)\setminus\mathcal{C}(X,t)[T]|
  &\geq(t+2)\bigl(|\mathcal C_x|-(n-t-2)!\bigr)\\
  &\geq t((n-t-1)!-(n-t-2)!)\\
  &=t(n-t-2)(n-t-2)!.
\end{align*}
This proves the assertion for $\mathcal{C}(X,t)$.  Since $\mathcal{C}(X,t)\subseteq\mathcal{A}(X,t)$ and star $t$-diversity is monotone under inclusion, the assertion for $\mathcal{A}(X,t)$ follows from the upper value already witnessed by every $T\in\binom Xt$.
\end{proof}

By the same argument of \cref{lem:large-fibre}, we get the following lemma.
\begin{lemma}\label{lem:large-fiber}
Let $t\ge 1$ and $n\ge t+7$, $B$ be a partial permutation of size $t+1$, and let $\mathcal{F}\subseteq\Sigma_n$ be $t$-intersecting.  If
\[
  |\mathcal{F}[B]|\geq\left(1-\frac1e+0.001\right)(n-t-1)!,
\]
then every $F\in\mathcal{F}$ satisfies $|F\cap B|\geq t$.

\end{lemma}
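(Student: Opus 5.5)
The plan is to mirror the proof of Lemma~\ref{lem:large-fibre} with $2t$ replaced by $t+1$. We argue by contradiction. Suppose some $F_0\in\mathcal F$ satisfies $|F_0\cap B|\le t-1$. Every $G\in\mathcal F[B]$ contains $B$ and must satisfy $|G\cap F_0|\ge t$. The part $G\cap B$ contributes only $|F_0\cap B|\le t-1$ common points, so $G$ must contain at least one point of $F_0\setminus B$. Moreover, such a point must be compatible with $B$: it must lie in a row and column not used by $B$, since otherwise no permutation containing $B$ could contain it. Note that $F_0\neq G$ automatically, since $F_0\notin\mathcal F[B]$ when $|F_0\cap B|\le t-1<t+1$. (If $t$-intersection were only required for distinct members, this case is still covered.)

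Next, relabel so that $B=\{(1,1),\ldots,(t+1,t+1)\}$. Permutations containing $B$ then correspond bijectively to permutations of the remaining $m=n-t-1$ rows and columns. Let $P$ be the set of points of $F_0\setminus B$ lying in rows and columns outside $[t+1]$. Since $F_0$ is a permutation, $P$ is a partial permutation of this $m\times m$ board, and every member of $\mathcal F[B]$ restricts to a permutation on the board that meets $P$. We extend $P$ arbitrarily to a full permutation $\pi$ of the board; any permutation meeting $P$ then also meets $\pi$. By inclusion--exclusion, the number of permutations of $[m]$ meeting a fixed permutation equals the non-derangement count
\[
m!\sum_{i=1}^{m}\frac{(-1)^{i-1}}{i!}.
\]
This count is at most $\left(1-\frac1e+\frac{1}{(m+1)!}\right)m!$, because the alternating series tail is bounded by its first omitted term.

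The only point requiring care is the numerical check. Since $n\ge t+7$ we have $m\ge 6$, so $1/(m+1)!\le 1/5040<0.0005$. This gives
\[
|\mathcal F[B]|\le\left(1-\frac1e+0.0005\right)(n-t-1)!<\left(1-\frac1e+0.001\right)(n-t-1)!,
\]
which contradicts the hypothesis. Hence every $F\in\mathcal F$ satisfies $|F\cap B|\ge t$.
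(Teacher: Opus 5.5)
Your proof is correct and is essentially the paper's argument: the paper proves this lemma by repeating the proof of Lemma~\ref{lem:large-fibre} with $2t$ replaced by $t+1$. That proof forces every member of $\mathcal F[B]$ to meet the $B$-compatible part of $F_0\setminus B$, then bounds this by the non-derangement count on the remaining $m=n-t-1\ge 6$ points. Your explicit tail estimate $1/(m+1)!\le 1/5040<0.0005$ simply justifies the numerical bound the paper states without proof.
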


Next we collect below the numerical consequences of the lower bound on $n$
that will be used soon. 
\begin{lemma}\label{Counting2}
Let $n\ge 6(t+4)^4, t\ge 1$ and $x=4.1,\tau=1.5,r_0=\frac{n-q}{e},q=\left\lceil(t+2)\log_{\tau}n\right\rceil$, put $\rho_t:=2(t+2)(t+1)^{3}+\frac{2}{3}$, then
\begin{enumerate}[(i)]
    \item $r_0\geq x\tau q$ and $r_0>2^5\frac{x\tau}{x-1}\log_2(2n)$
 
    \item $n>24q$ and  \[
  \begin{aligned}
    &t(n-t-2)>\rho_t,\\
    &t(n-t-2)>(t-1)(n-t-1)+\rho_t,\\
    &t(n-t-2)>(t-1)(n-t-2)+\left(1-\frac1e+0.001\right)(n-t-1)+\rho_t.
  \end{aligned}
  \]
\end{enumerate}
\end{lemma}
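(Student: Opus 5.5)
This is a purely numerical lemma, so the plan mirrors the proof of \cref{Counting}. The first step is a crude upper bound on $q$. Since $n\ge 6(t+4)^4\ge 6\cdot 5^4=3750$, I would compare $\log_{1.5} n$ with a small power of $n$. Concretely, $h(u)=u^{1/4}-c\log_{1.5}u$ is increasing and positive beyond an explicit threshold for a suitable constant $c$, which gives $\log_{1.5}n\le C n^{1/4}$. Combining this with $t+2\le (t+4)\le (n/6)^{1/4}$ yields
\[
q\le (t+2)\log_{1.5}n+1\le C'\,(n/6)^{1/4}n^{1/4}+1,
\]
which is $o(n)$. Checking the constants at the extreme $n=6(t+4)^4$, and noting that the ratio $q/n$ decreases in $n$, should give $n>24q$.

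Part (i) follows from $n>24q$. Indeed,
\[
r_0=\frac{n-q}{e}\ge \frac{23q}{e}>8.46q>6.15q=x\tau q.
\]
For the second inequality, $2^5\frac{x\tau}{x-1}=32\cdot 6.15/3.1<63.5$. It therefore suffices to have $63.5\log_2(2n)<\frac{(23/24)n}{e}$, i.e. $n>180\log_2(2n)$ roughly. This holds for $n\ge 3750$, since $u-180\log_2(2u)$ is increasing and positive there.

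For part (ii), rewrite the three inequalities in linear form:
\[
t(n-t-2)>\rho_t,\qquad n-t-2>(t-1)+\rho_t+\ldots
\]
Precisely, the second inequality reads $t(n-t-2)-(t-1)(n-t-1)=n-2t-1>\rho_t$. The third reads $(n-t-2)-(1-\tfrac1e+0.001)(n-t-1)>\rho_t$, i.e.
\[
\left(\tfrac1e-0.001\right)(n-t-1)-1>\rho_t.
\]
The third is the strongest, since its coefficient $\approx0.367$ is smallest, and the first follows from it for $t\ge1$. So it is enough to verify
\[
0.366(n-t-1)>2(t+2)(t+1)^3+\tfrac53.
\]
Using $n\ge 6(t+4)^4$, we have
\[
0.366\cdot 6(t+4)^4\approx 2.196(t+4)^4,
\]
which dominates $2(t+2)(t+1)^3+0.366(t+1)+\tfrac53$ because $(t+4)^4\ge (t+2)(t+1)^3+\text{large slack}$. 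Expanding, the slack is at least $(t+4)^4-(t+1)^4\ge 3\cdot 4^3$ already at $t=1$, which covers the constants.

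The only real obstacle is pinning down the constant in the $\log_{1.5}n$ versus $n^{1/4}$ comparison so that $n>24q$ holds uniformly in $t$. If the fourth-root bound is too lossy for small $t$, I would instead split into $t\le 3$, where $n\ge 3750$ and a direct check of $n>24((t+2)\log_{1.5}n+1)$ works, and $t\ge 4$, where $n\ge 6(t+4)^4$ is large enough that even $\log_{1.5}n\le 2\sqrt n$-type bounds suffice.
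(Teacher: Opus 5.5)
Your proposal is correct and takes the paper's route. You get $n>24q$ from a sublinear bound on $(t+2)\log_{1.5}n$, read off both parts of (i) from it, and reduce (ii) to linear inequalities in $n$ checked against $n\ge 6(t+4)^4$; your remark that the third inequality implies the other two is a tidy shortcut over the paper's three separate checks.

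The paper does the first step with $\log_{1.5}n<0.5\sqrt n$ and $t+2\le\sqrt n/20$. Your fourth-root split also works with no case split: $\log_{1.5}u\le 3u^{1/4}$ for $u\ge 3750$, together with $t+2\le(n/6)^{1/4}$, gives $24q<47\sqrt n+24<n$. So the fallback is unnecessary, and as written it would fail: $\log_{1.5}n\le 2\sqrt n$ only yields $24q\le 48(t+2)\sqrt n+24$, which exceeds $n$ at $n=6(t+4)^4$ for $4\le t\le 13$.
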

\begin{proof}

We first prove $n>24q$, which will be used below.  

Since $f(u):=0.50\sqrt u-\log_{1.5}u$ is increasing and positive for $u\ge3750$, we have
\[
  \log_{\tau}n<0.50\sqrt n.
\]
Moreover, since
$2.44(t+4)^2\ge20(t+2)$, we have
\[
  \sqrt n\ge\sqrt6\,(t+4)^2\ge20(t+2).
\]
Thus
\[
\begin{aligned}
  24q
  &\le24\bigl((t+2)\log_{\tau}n+1\bigr)\\
  &<12(t+2)\sqrt n+24
   \le0.60n+24<n.
\end{aligned}
\]
Consequently,
\[
  r_0=\frac{n-q}{e}>\frac{23q}{e}>6.15q=x\tau q
  \qquad and \qquad
   r_0>\frac{23n}{24e}>0.35n.
\]
Since $ g(u):=u-182\log_2(2u)$ is increasing and positive for $u\ge3750$, we have
\[
  182\log_2(2n)<n.
\]
Therefore,
\[
  2^5\frac{x\tau}{x-1}\log_2(2n)
  <63.49\log_2(2n)<0.35n<r_0.
\]
This proves (i).

Furthermore,
\[
\begin{aligned}
  n-t-2-6(t+2)(t+1)^3
  &\ge6(t+4)^4-t-2-6(t+2)(t+1)^3>0.
\end{aligned}                                     
\]
Thus,
\[
  t(n-t-2)>n-t-2>6(t+2)(t+1)^3>\rho_t,
\]
and
\[
\begin{aligned}
 &t(n-t-2)-\bigl((t-1)(n-t-1)+\rho_t\bigr)\\
 &\qquad=n-t-2-\rho_t-t+1\\
 &\qquad>4(t+2)(t+1)^3-t+0.33>0.
\end{aligned}
\]
Finally, it's easy to check
\[
\begin{aligned}
  \left(\frac1e-0.001\right)(n-t-2)
  >2.16(t+2)(t+1)^3>\rho_t+1-\frac1e+0.001.
\end{aligned}
\]
which is equivalent to,
\[
  (t-1)(n-t-2)
  +\left(1-\frac1e+0.001\right)(n-t-1)+\rho_t
  <t(n-t-2).
\]
This proves (ii).  
\end{proof}

\begin{proof}[Proof of the upper bound of \Cref{thm:main2}]
First, we apply \cref{lem:spread-approximation}  to $\mathcal F$ with $x=4.1,\tau=1.5,r_0=\frac{n-q}{e},q=\left\lceil(t+2)\log_{\tau}n\right\rceil$, where all the conditions of the lemma are satisfied by \cref{Counting2} $(i)$, obtaining that there exists a $t$-intersecting family $\mathcal B$ of partial permutations, each of size at most $q$, and $\mathcal F'\subseteq\mathcal F$ such that
\[
        \mathcal F=\mathcal F[\mathcal B]\cup\mathcal F'
 \qquad and\qquad
        |\mathcal F'|\le\tau^{-1}n^{-(t+2)}n! \le\frac23(n-t-2)!.
\]

Then we use the peeling operation from Section~3 again, but this time we peel until $t+2$.
Set $\mathcal P_q=\mathcal R(\mathcal B,q)$ and
\[
\mathcal A_q=\{A\in\mathcal P_q:|A|=q\}.
\]
For $i=q-1,q-2,\ldots,t+2$, define
\[
\mathcal P_i=
\mathcal R(\mathcal P_{i+1}\setminus\mathcal A_{i+1},i),
\qquad
\mathcal A_i=\{A\in\mathcal P_i:|A|=i\}.
\]
Finally, put
\[
\mathcal A=\mathcal P_{t+2}\setminus\mathcal A_{t+2}.
\]
Then $\mathcal A$ is a $t$-intersecting family of partial permutations, each of size at most $t+1$. By construction, every $B\in\mathcal B$ contains some member of $\mathcal A\cup\mathcal A_{t+2}\cup\cdots\cup\mathcal A_q$.  Hence
\[
        \mathcal F[\mathcal B]
        \subseteq
        \mathcal F[\mathcal A]\cup
        \bigcup_{i=t+2}^{q}\mathcal F[\mathcal A_i].
\]
Let
\[
        \mathcal F''=\bigcup_{i=t+2}^{q}\mathcal F[\mathcal A_i]\cup\mathcal F'.
\]

Because of \cref{Counting2} $(ii)$, through similar simple calculations, we can get the scaling in \eqref{eq:star} again.
Therefore, for $n\ge6(t+4)^4$
\[
\begin{aligned}
        |\mathcal F''|
        &\le \sum_{i=t+2}^{q}|\mathcal A_i|(n-i)!+|\mathcal F'| \\
        &\le 4\binom{t+2}{t}(t+1)^{2}(n-t-2)!+\frac{2}{3}(n-t-2)! \\
        &=(2(t+2)(t+1)^{3}+\frac{2}{3})(n-t-2)!=\rho_t(n-t-2)!\\
        &<t(n-t-2)(n-t-2)!.
\end{aligned}
\]

If there exists a $t$-partial permutation $T$ contained by every member of $\mathcal A$, then $\{F\in \mathcal F[\mathcal A]\colon T \not\subset F\}=\emptyset$, so
\[
        \gamma_t^{\star}(\mathcal F)
        \le |\mathcal{F}\backslash\mathcal{F}[T]|
        \le |\mathcal F''|
        =\rho_t(n-t-2)!<t(n-t-2)(n-t-2)!,
\]
and \Cref{thm:main2} follows.  Hence we may assume from now on that
\begin{equation*}
        \text{for every $t$-partial permutation $T$, there exists $A\in\mathcal A$ such that $T\not\subset A$.}
\end{equation*}
Let us recall the $t$-transversal number $\tau_t(\mathcal F):=\min\bigl\{|T|:\ T\text{ is a $t$-transversal of }\mathcal F\bigr\}$, so the condition above is equivalent to $\tau_t(\mathcal A)>t$.

\begin{claim}\label{claim:t+1-uniform}
The family $\mathcal A$ is $(t+1)$-uniform and there is a set $X$ of size $t+2$ such that
  \[
    \mathcal{A}\subseteq\binom X{t+1}.
  \]
  Moreover, if $\mathcal{A}\subsetneq \binom X{t+1}$, there is a partial permutation $T$ of size $t$ contained in all but at most $t-1$ members of $\mathcal{A}$.  If $\mathcal{A}=\binom X{t+1}$, then $X$ itself is a partial permutation.
\end{claim}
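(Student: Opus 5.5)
Throughout, $\mathcal A$ is $t$-intersecting, consists of partial permutations of size at most $t+1$, and satisfies $\tau_t(\mathcal A)>t$: no $t$-set is contained in every member.

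\textbf{Uniformity.} Every member of $\mathcal A$ has size at least $t$, since it $t$-intersects itself. Suppose some $A\in\mathcal A$ has $|A|=t$. Then every $A'\in\mathcal A$ satisfies $|A'\cap A|\ge t$, so $A\subseteq A'$. Thus $A$ is a $t$-partial permutation contained in every member, contradicting $\tau_t(\mathcal A)>t$. Hence $\mathcal A$ is $(t+1)$-uniform.

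\textbf{Containment in a $(t+2)$-set.} Two distinct members $A,B$ share at least $t$ points, so $|A\cap B|=t$ and $|A\cup B|=t+2$. The family is not a trivial $t$-star, so I aim for the classical dichotomy for $(t+1)$-uniform $t$-intersecting families.
\begin{enumerate}[(a)]
\item All members contain a common $t$-set. This is excluded.
\item All members lie in a common $(t+2)$-set.
\end{enumerate}
To prove it, fix distinct $A,B$, put $T_0=A\cap B$ and $X=A\cup B$, and take any $C\in\mathcal A$. Since $|C\cap A|\ge t$ and $|C\cap B|\ge t$ with $|C|=t+1$, counting gives two options: either $C\supseteq T_0$, or $C\subseteq X$.

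Suppose some $C\not\subseteq X$. Then $C=T_0\cup\{c\}$ with $c\notin X$. Now take any $D\in\mathcal A$. If $D\not\supseteq T_0$, then applying the same analysis to the pairs $(A,C)$ and $(B,C)$ forces $D\subseteq A\cup C$ and $D\subseteq B\cup C$. Hence $D\subseteq T_0\cup\{c\}=C$, so $D=C\supseteq T_0$, a contradiction. Therefore every member contains $T_0$, contradicting $\tau_t(\mathcal A)>t$. So all members lie in $X$, with $|X|=t+2$.

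\textbf{The subcase $\mathcal A\subsetneq\binom{X}{t+1}$.} Members of $\binom{X}{t+1}$ correspond to $X\setminus\{x\}$ for $x\in X$. Write $\mathcal A=\{X\setminus\{x\}:x\in M\}$. Since $\mathcal A\neq\binom{X}{t+1}$, there is some $x_0\notin M$, so $|M|\le t+1$.

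Now choose a $t$-set $T\subseteq X\setminus\{x_0\}$. Then $T$ is a partial permutation, because it lies in some member of $\mathcal A$. The set $T$ is not contained in $X\setminus\{x\}$ exactly when $x\in T$, so the number of members missing $T$ is $|M\cap T|$. I choose $T$ to omit some element of $M\setminus\{x_0\}$; this set is nonempty because $|M|\ge 2$ (distinct members $A,B$ exist). Then $|M\cap T|\le |M|-1\le t$.

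To reach the claimed bound $t-1$, I take $T$ to omit both $x_0$ and one element $m\in M$. This is possible since $|X\setminus\{x_0,m\}|=t$, and it gives $|M\cap T|\le |M|-1$. If $|M|=t+1$, then $M=X\setminus\{x_0\}$ and the count is exactly $t$. Equivalently, one must show that $|M|\le t$ is forced, i.e.\ that $\mathcal A$ misses at least two sets. I expect this to be the main obstacle.

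I suspect the intended statement is ``all but at most $t$'' or uses $\rho_t$ slack. For the plan, I will verify directly that $|M|=t+1$ also yields the needed conclusion. Failing that, I will bound the count by $|M|-1$ and note it is at most $t-1$ whenever $|M|\le t$.

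\textbf{The subcase $\mathcal A=\binom{X}{t+1}$.} Every pair of points of $X$ lies together in some member (since $t+1\ge 2$), and every member is a partial permutation. Hence no two points of $X$ conflict, and $X$ itself is a partial permutation.
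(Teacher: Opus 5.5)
Your uniformity argument, your derivation of $\mathcal A\subseteq\binom X{t+1}$ (a more carefully spelled-out version of the paper's ``the two types cannot mix'' step), and your treatment of the case $\mathcal A=\binom X{t+1}$ are correct and match the paper. The gap is the bound $t-1$ in the case $\mathcal A\subsetneq\binom X{t+1}$, which you do not prove. Because you insist that $T$ omit the missing index $x_0$, you get $|M\cap T|=|M|-1$, and this equals $t$ when $|M|=t+1$.

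Your proposed repair, showing that $|M|\le t$ is forced, cannot succeed. For $t\ge 2$ and a partial permutation $X$ of size $t+2$, the family $\binom X{t+1}\setminus\{X\setminus\{x_0\}\}$ is $t$-intersecting, satisfies $\tau_t>t$, and has $|M|=t+1$.

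Nor is the statement a misprint for ``at most $t$''. The claim feeds the estimate $\gamma_t^\star(\mathcal F)\le (t-1)(n-t-1)!+\rho_t(n-t-2)!<t(n-t-2)(n-t-2)!$. With $t$ missing members, those members alone could contribute $t(n-t-1)!=t(n-t-1)(n-t-2)!$, which already exceeds the target. The $\rho_t$ slack only adds to the left side, so it does not help.

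The missing idea is to omit two elements of $M$ rather than $x_0$. Since $\mathcal A$ has two distinct members, $|M|\ge 2$, so you can take distinct $u,v\in M$ and set $T=X\setminus\{u,v\}$. Then $T=(X\setminus\{u\})\cap(X\setminus\{v\})\subseteq X\setminus\{u\}\in\mathcal A$, so $T$ is a $t$-element partial permutation. For $x\in X$ we have $X\setminus\{x\}\supseteq T$ exactly when $x\notin T$, that is, when $x\in\{u,v\}$. Hence the members of $\mathcal A$ missing $T$ are those indexed by $M\setminus\{u,v\}$, and there are exactly $|M|-2\le t-1$ of them. This is precisely the paper's choice of $T$.
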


\begin{proof}
It's easy to know that every member of $\mathcal A$ has size $t$ or $t+1$. 
If there exists a $T \in\mathcal{A}$ of size t, then the $t$-intersetion of $\mathcal{A}$ gives that every member contains T, which contradicts $\tau_t(\mathcal A)>t$. Hence every member of $\mathcal A$ has size exactly $t+1$.

Take two distinct members $A_0,A_1\in \mathcal{A}$ such that their intersection $C=A_0\cap A_1$ has size $t$, their union $X=A_0\cup A_1$ has size $t+2$. Moreover, a member $C\cup\{z\}$ with $z\notin X$ has only $t-1$ points in common with every member $X\setminus\{c\}$ that does not contain $C$.  Hence the two types cannot mix: the whole family is contained either in the $t$-star centered in $C$ or in the $\binom X{t+1}$. Since whenever $\mathcal{A}$ is contained in a $t$-star will contradict $\tau_t(\mathcal A)>t$, we get  $\mathcal{A}\subseteq\binom X{t+1}$. 

Hence we can write
\[
  \mathcal{A}=\{X\setminus\{x\}:x\in S\}
\]
for some $S\subseteq X$.  If $|S|\leq2$, the common intersection contains a $t$-set.  If $3\leq|S|\leq t+1$, choose distinct $u,v\in S$ and put $T=X\setminus\{u,v\}$.  This is a partial permutation because it equals
\[
  (X\setminus\{u\})\cap(X\setminus\{v\}).
\]
Exactly $|S|-2\leq t-1$ members of $\mathcal{A}$ fail to contain $T$.

Finally, if $S=X$, then every $(t+1)$-subset of $X$ is a partial permutation.  Any conflicting pair of points of $X$ would lie together in one of these subsets, since $|X|=t+2\geq3$, a contradiction. Thus, $X$ is a partial permutation.
\end{proof}

\begin{lemma} \label{lem:recognition}
Let $X$ be a partial permutation of size $t+2$.  A permutation $F\in\Sigma_n$ $t$-intersects with every member of $\binom X{t+1}$ if and only if $|F\cap X|\geq t+1$, that is to say $F\in \mathcal{A}(X,t)$.
\end{lemma}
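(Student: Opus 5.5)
This is a finite counting statement about the trace $F\cap X$, so the plan is to prove both directions directly by elementary counting inside the $(t+2)$-set $X$. The permutation structure plays no role beyond the fact that $F\cap X$ is a subset of $X$. The hypothesis that $X$ is a partial permutation only guarantees that every $(t+1)$-subset of $X$ is a genuine partial permutation, so that the family $\binom X{t+1}$ makes sense in the setting of the paper.

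For the ``if'' direction, put $k=|F\cap X|$ and assume $k\ge t+1$. For any $A\in\binom X{t+1}$ we have $A=X\setminus\{u\}$ for a unique $u\in X$. Then
\[
|F\cap A|=|F\cap X|-|F\cap\{u\}|\ge k-1\ge t,
\]
so $F$ $t$-intersects every member of $\binom X{t+1}$.

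For the ``only if'' direction, suppose $k=|F\cap X|\le t$. I will exhibit a member $A\in\binom X{t+1}$ with $|F\cap A|\le t-1$.
\begin{enumerate}
\item If $k\ge 1$, choose $u\in F\cap X$ and put $A=X\setminus\{u\}$. Then $|F\cap A|=k-1\le t-1<t$.
\item If $k=0$, any $A\in\binom X{t+1}$ gives $|F\cap A|=0<t$, since $t\ge1$.
\end{enumerate}
In either case $F$ fails to $t$-intersect some member of $\binom X{t+1}$, which proves the contrapositive.

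Finally, by the definition of $\mathcal{A}(X,t)$ at the start of this section, the condition $|F\cap X|\ge t+1$ is literally $F\in\mathcal{A}(X,t)$. There is no real obstacle here. The only point needing care is the degenerate case $F\cap X=\varnothing$, which is handled by $t\ge1$.
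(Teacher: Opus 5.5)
Your proof is correct and essentially the paper's argument. The paper also gets sufficiency from the observation that deleting one point of $X$ lowers $|F\cap X|$ by at most one. For necessity it first notes $|F\cap X|\ge t$ and then deletes a point of $F\cap X$ when $|F\cap X|=t$, whereas you treat all $1\le|F\cap X|\le t$ at once and the empty case separately, which is only an organizational difference.
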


\begin{proof}
Obviously, $|F\cap X|\geq t$.  If $|F\cap X|=t$, choose $x\in F\cap X$, then $X\setminus\{x\}\in\binom X{t+1}$ and $|F\cap(X\setminus\{x\})|=t-1$, contradiction. Thus, $|F\cap X|\geq t+1$ is necessary.

Conversely, if $|F\cap X|\geq t+1$, then for every $x\in X$, removing $x$ decreases the intersection with X by at most one, so F meets $X\backslash \{x\}$ in at least t points.
\end{proof}

By \Cref{claim:t+1-uniform}, $\mathcal{A}$ is contained in $\binom X{t+1}$.  If $\mathcal{A}\neq \binom X{t+1}$, then there is a partial permutation $T$ of size $t$ contained in all but at most $t-1$ elements of $\mathcal{A}$. Hence, by \cref{Counting2} $(ii)$, for $n\geq 6(t+4)^4$
\begin{align*}
   \gamma_t^\star(\mathcal{F})
  & \leq|\mathcal{F}\backslash\mathcal{F}[T]|\leq(t-1)(n-t-1)!+|\mathcal F''|\\
  &\leq(t-1)(n-t-1)!+\rho_t(n-t-2)!\\
  &<t(n-t-2)(n-t-2)!.
\end{align*}
Thus  $\mathcal{A}=\binom X{t+1}$, where $X$ is a partial permutation with $t+2$ points.  For $x\in X$, let
$A_x=X\setminus\{x\}$.

First we assume that there exists $x\in X$ such that $ |\mathcal{F}[A_x]|<\left(1-\frac1e+0.001\right)(n-t-1)!.$ Choose a partial permytation $T\in\binom Xt$ with $x\in T$.  Exactly $t$ elements
 of $\mathcal{A}=\binom X{t+1}$ fail to contain $T$, we write $A_y$, with $y\in T$.  For every $y\neq x$, we have
\[
  |\mathcal{F}[A_y]\backslash\mathcal{F}[T]|\leq |\Sigma_n[A_y]\setminus\Sigma_n[T]|=(n-t-1)!-(n-t-2)!, 
\]
while the $\mathcal{F}[A_x]$ contributes less than $\left(1-\frac1e+0.001\right)(n-t-1)!$.  

Hence, by \cref{Counting2} $(ii)$, for $n\geq 6(t+4)^4$
\begin{align*}
  \gamma_t^\star(\mathcal{F})
  &\leq(t-1)((n-t-1)!-(n-t-2)!)+\left(1-\frac1e+0.001\right)(n-t-1)!+\rho_t(n-t-2)!\\
  &<t(n-t-2)(n-t-2)!.
\end{align*}

It remains to treat the case that for every $x\in X$, $|\mathcal{F}[A_x]|\geq \left(1-\frac1e+0.001\right)(n-t-1)!.$ By \Cref{lem:large-fiber}, every $F\in\F$ meets every $A_x$ in at least
$t$ points.  Then by \Cref{lem:recognition},
\[
  \mathcal{F}\subseteq\mathcal{A}(X,t).
\]
Using monotonicity of star $t$-diversity under inclusion and Lemma
\ref{lem:model}, we conclude that
\[
  \gamma_t^\star(\mathcal{F})
  \leq\gamma_t^\star(\mathcal{A}(X,t))
  =t(n-t-2)(n-t-2)!.
\]
This proves the upper bound of \Cref{thm:main2}.
\end{proof}

\subsection{The structure of the equality cases}
Now we consider $\mathcal{F}\subset \Sigma_n$ satisfying that $\gamma_t^\star(\mathcal{F})=t(n-t-2)(n-t-2)!$.

Three cases in the proof of the upper bound give strict inequalities: $\mathcal{A}$ being contained in a $t$-star, $\mathcal{A}\subsetneq \binom X{t+1}$ and $\mathcal{A}=\binom X{t+1}$ with some $x\in X$ such that $ |\mathcal{F}[A_x]|<\left(1-\frac1e+0.001\right)(n-t-1)!.$  Hence only the case $\mathcal{A}=\binom X{t+1}$ with all $x\in X$ such that $ |\mathcal{F}[A_x]|\geq \left(1-\frac1e+0.001\right)(n-t-1)!$ is possible.  

Thus, for some partial permutation $X$ of size $t+2$, we have 
\begin{equation*}
  \mathcal{F}\subseteq\mathcal{A}(X,t).
\end{equation*}
Fix $T\in\binom Xt$.  By \cref{lem:model},
\[
  \mathcal{A}(X,t)\setminus\mathcal{A}(X,t)[T]
  =\mathop{\dot\bigcup}_{x\in T}\mathcal C_x,
\]
and this set has exactly $t|\mathcal C_x|=t(n-t-2)(n-t-2)!$ members.  On the other hand, $\mathcal{F}\subseteq\mathcal{A}(X,t)$ gives 
\[
  \mathcal{F}\setminus\mathcal{F}[T]
  \subseteq
  \mathop{\dot\bigcup}_{x\in T}\mathcal C_x.
\]
However, by the definition of star $t$-diversity, every partial permutation $T$ of size $t$ satisfies
\[
  |\mathcal{F}\setminus\mathcal{F}[T]|
  \geq\gamma_t^\star(\mathcal{F})=t(n-t-2)(n-t-2)!.
\]
Then the inclusion must be an equality, and hence
\[
  \mathcal C_x\subseteq\mathcal{F}\qquad \forall x\in T.
\]
Letting $T$ range over $\binom Xt$, we obtain
\[
  \mathcal{C}(X,t)=\mathop{\dot\bigcup}_{x\in X}\mathcal C_x
  \subseteq\mathcal{F}\subseteq\mathcal{A}(X,t).
\]

Conversely, suppose that a $t$-intersecting family $\mathcal{F}\subset \Sigma_n$ satisfies this interval of inclusions. \Cref{lem:model} and monotonicity of star $t$-diversity imply
\[
  t(n-t-2)(n-t-2)!
  =\gamma_t^\star(\mathcal{C}(X,t))
  \leq\gamma_t^\star(\mathcal{F})
  \leq\gamma_t^\star(\mathcal{A}(X,t))
  =t(n-t-2)(n-t-2)!.
\]
Thus equality holds. 

In particular, if $\mathcal{F}$ is also maximal, then $\mathcal{F}=\{F\in \Sigma_n:|F\cap X|\geq t+1\}=\Sigma_n[\mathcal A]$ where $\mathcal{A}=\binom{X}{t+1}$ and $X$ is still a partial permutation $X$ of size $t+2$.

\bibliographystyle{abbrv}
\bibliography{reference}

@article{KUPAVSKII2024109653,
  author   = {Kupavskii, Andrey and Zakharov, Dmitrii},
  title    = {Spread approximations for forbidden intersections problems},
  journal  = {Advances in Mathematics},
  volume   = {445},
  pages    = {109653},
  year     = {2024},
  issn     = {0001-8708},
  doi      = {10.1016/j.aim.2024.109653},
  url      = {https://www.sciencedirect.com/science/article/pii/S0001870824001683}
}

@article{FUREDI1980282,
  author   = {F{\"u}redi, Zolt{\'a}n},
  title    = {On maximal intersecting families of finite sets},
  journal  = {Journal of Combinatorial Theory, Series A},
  volume   = {28},
  number   = {3},
  pages    = {282--289},
  year     = {1980},
  issn     = {0097-3165},
  doi      = {10.1016/0097-3165(80)90071-0},
  url      = {https://www.sciencedirect.com/science/article/pii/0097316580900710}
}

@article{EKR,
  author  = {Erd{\H{o}}s, P. and Ko, C. and Rado, R.},
  title   = {Intersection theorems for systems of finite sets},
  journal = {Quart. J. Math. Oxford Ser. (2)},
  volume  = {12},
  year    = {1961},
  pages   = {313--320},
  doi     = {10.1093/qmath/12.1.313}
}

@article{DezaFrankl77,
  author  = {Deza, M. and Frankl, P.},
  title   = {On the maximum number of permutations with given maximal or minimal distance},
  journal = {J. Combin. Theory Ser. A},
  volume  = {22},
  number  = {3},
  year    = {1977},
  pages   = {352--360},
  doi     = {10.1016/0097-3165(77)90009-7}
}

@article{CameronKu03,
  author  = {Cameron, P. J. and Ku, C. Y.},
  title   = {Intersecting families of permutations},
  journal = {European J. Combin.},
  volume  = {24},
  number  = {7},
  year    = {2003},
  pages   = {881--890},
  doi     = {10.1016/S0195-6698(03)00078-7}
}

@article{LaroseMalvenuto04,
  author  = {Larose, B. and Malvenuto, C.},
  title   = {Stable sets of maximal size in {Kneser}-type graphs},
  journal = {European J. Combin.},
  volume  = {25},
  year    = {2004},
  pages   = {657--673},
  doi     = {10.1016/j.ejc.2003.10.003}
}

@article{EllisFriedgutPilpel11,
  author  = {Ellis, D. and Friedgut, E. and Pilpel, H.},
  title   = {Intersecting families of permutations},
  journal = {J. Amer. Math. Soc.},
  volume  = {24},
  number  = {3},
  year    = {2011},
  pages   = {649--682},
  doi     = {10.1090/S0894-0347-2011-00690-5}
}

@article{EllisKellerLifshitz24,
  author  = {Ellis, D. and Keller, N. and Lifshitz, N.},
  title   = {Stability for the complete intersection theorem, and the forbidden intersection problem of {Erd{\H{o}}s} and {S{\'o}s}},
  journal = {J. Eur. Math. Soc.},
  volume  = {26},
  number  = {5},
  year    = {2024},
  pages   = {1611--1654},
  doi     = {10.4171/JEMS/1346}
}

@article{Kupavskii24b,
  author        = {Kupavskii, A.},
  title         = {An almost complete {$t$}-intersection theorem for permutations},
  journal       = {arXiv preprint},
  year          = {2024},
  eprint        = {2405.07843},
  archivePrefix = {arXiv}
}

@article{HiltonMilner67,
  author  = {Hilton, A. J. W. and Milner, E. C.},
  title   = {Some intersection theorems for systems of finite sets},
  journal = {Quart. J. Math. Oxford Ser. (2)},
  volume  = {18},
  year    = {1967},
  pages   = {369--384},
  doi     = {10.1093/qmath/18.1.369}
}

@incollection{Frankl77,
  author    = {Frankl, P.},
  title     = {The {Erd{\H{o}}s--Ko--Rado} theorem is true for {$n=ckt$}},
  booktitle = {Combinatorics, Vol. I, Proceedings of the Fifth Hungarian Colloquium, Keszthely, 1976},
  series    = {Colloq. Math. Soc. J{\'a}nos Bolyai},
  volume    = {18},
  pages     = {365--375},
  publisher = {North-Holland},
  address   = {Amsterdam},
  year      = {1978}
}

@article{Frankl87,
  author  = {Frankl, P.},
  title   = {{Erd{\H{o}}s--Ko--Rado} theorem with conditions on the maximal degree},
  journal = {J. Combin. Theory Ser. A},
  volume  = {46},
  number  = {2},
  year    = {1987},
  pages   = {252--263},
  doi     = {10.1016/0097-3165(87)90005-7}
}

@article{Kupavskii18,
  author  = {Kupavskii, A.},
  title   = {Diversity of uniform intersecting families},
  journal = {European J. Combin.},
  volume  = {74},
  year    = {2018},
  pages   = {39--47},
  doi     = {10.1016/j.ejc.2018.06.004}
}

@article{FranklWang24,
  author  = {Frankl, P. and Wang, J.},
  title   = {Improved bounds on the maximum diversity of intersecting families},
  journal = {European J. Combin.},
  volume  = {118},
  year    = {2024},
  pages   = {103885},
  doi     = {10.1016/j.ejc.2024.103885}
}

@article{WangXiao25,
  author  = {Wang, J. and Xiao, J.},
  title   = {A note on the maximum diversity of intersecting families in the symmetric group},
  journal = {European J. Combin.},
  volume  = {134},
  year    = {2026},
  pages   = {104331},
  doi     = {10.1016/j.ejc.2025.104331}
}

@article{Frankl22,
  author  = {Frankl, P.},
  title   = {Pseudo sunflowers},
  journal = {European J. Combin.},
  volume  = {104},
  year    = {2022},
  pages   = {103553},
  doi     = {10.1016/j.ejc.2022.103553}
}

@article{AlweissLovettWuZhang21,
  author  = {Alweiss, R. and Lovett, S. and Wu, K. and Zhang, J.},
  title   = {Improved bounds for the sunflower lemma},
  journal = {Ann. of Math. (2)},
  volume  = {194},
  number  = {3},
  year    = {2021},
  pages   = {795--815},
  doi     = {10.4007/annals.2021.194.3.5}
}

@article{Wilson84,
  author  = {Wilson, R. M.},
  title   = {The exact bound in the {Erd{\H{o}}s--Ko--Rado} theorem},
  journal = {Combinatorica},
  volume  = {4},
  number  = {2--3},
  year    = {1984},
  pages   = {247--257},
  doi     = {10.1007/BF02579226}
}

@article{AhlswedeKhachatrian97,
  author  = {Ahlswede, R. and Khachatrian, L. H.},
  title   = {The complete intersection theorem for systems of finite sets},
  journal = {European J. Combin.},
  volume  = {18},
  number  = {2},
  year    = {1997},
  pages   = {125--136},
  doi     = {10.1006/eujc.1995.0092}
}

@article{Frankl78,
  author  = {Frankl, P.},
  title   = {On intersecting families of finite sets},
  journal = {J. Combin. Theory Ser. A},
  volume  = {24},
  number  = {2},
  year    = {1978},
  pages   = {146--161},
  doi     = {10.1016/0097-3165(78)90003-1}
}

@incollection{FranklShifting87,
  author    = {Frankl, P.},
  title     = {The shifting technique in extremal set theory},
  booktitle = {Surveys in Combinatorics 1987},
  series    = {London Math. Soc. Lecture Note Ser.},
  volume    = {123},
  pages     = {81--110},
  publisher = {Cambridge Univ. Press},
  address   = {Cambridge},
  year      = {1987}
}

@article{FranklFuredi86,
  author  = {Frankl, P. and F{\"u}redi, Z.},
  title   = {Non-trivial intersecting families},
  journal = {J. Combin. Theory Ser. A},
  volume  = {41},
  number  = {1},
  year    = {1986},
  pages   = {150--153},
  doi     = {10.1016/0097-3165(86)90121-4}
}

@article{GodsilMeagher09,
  author  = {Godsil, C. and Meagher, K.},
  title   = {A new proof of the {Erd{\H{o}}s--Ko--Rado} theorem for intersecting families of permutations},
  journal = {European J. Combin.},
  volume  = {30},
  number  = {2},
  year    = {2009},
  pages   = {404--414},
  doi     = {10.1016/j.ejc.2008.05.006}
}

@article{KellerLifshitzMinzerSheinfeld24,
  author  = {Keller, N. and Lifshitz, N. and Minzer, D. and Sheinfeld, O.},
  title   = {On {$t$}-intersecting families of permutations},
  journal = {Advances in Mathematics},
  volume  = {445},
  year    = {2024},
  pages   = {109650},
  doi     = {10.1016/j.aim.2024.109650}
}

@article{LemonsPalmer08,
  author  = {Lemons, N. and Palmer, C.},
  title   = {The unbalance of set systems},
  journal = {Graphs Combin.},
  volume  = {24},
  number  = {4},
  year    = {2008},
  pages   = {361--365},
  doi     = {10.1007/s00373-008-0793-9}
}

@article{Frankl20,
  author  = {Frankl, P.},
  title   = {Maximum degree and diversity in intersecting hypergraphs},
  journal = {J. Combin. Theory Ser. B},
  volume  = {144},
  year    = {2020},
  pages   = {81--94}
}

@article{FranklKupavskii21,
  author  = {Frankl, P. and Kupavskii, A.},
  title   = {Diversity},
  journal = {J. Combin. Theory Ser. A},
  volume  = {182},
  year    = {2021},
  pages   = {105468},
  doi     = {10.1016/j.jcta.2021.105468}
}

@article{FranklWang24MaxDegree,
  author  = {Frankl, P. and Wang, J.},
  title   = {Improved bounds concerning the maximum degree of intersecting hypergraphs},
  journal = {Electron. J. Combin.},
  volume  = {31},
  number  = {2},
  year    = {2024},
  pages   = {Paper No. 2.33},
  doi     = {10.37236/11616}
}

@article{FranklWang25Cdiversity,
  author  = {Frankl, P. and Wang, J.},
  title   = {On the {$C$}-diversity of intersecting {$k$}-graphs},
  journal = {European J. Combin.},
  volume  = {130},
  year    = {2025},
  pages   = {104199},
  doi     = {10.1016/j.ejc.2025.104199}
}

@article{KuWong20,
  author  = {Ku, C. Y. and Wong, K. B.},
  title   = {On diversity of certain {$t$}-intersecting families},
  journal = {Bull. Korean Math. Soc.},
  volume  = {57},
  number  = {4},
  year    = {2020},
  pages   = {815--829},
  doi     = {10.4134/BKMS.b190301}
}

@article{MagnanPalmerWood24,
  author  = {Magnan, V. and Palmer, C. and Wood, R.},
  title   = {A generalization of diversity for intersecting families},
  journal = {European J. Combin.},
  volume  = {122},
  year    = {2024},
  pages   = {104041},
  doi     = {10.1016/j.ejc.2024.104041}
}

@misc{Stoeckl22,
  author       = {Stoeckl, M.},
  title        = {Lecture notes on recent improvements for the sunflower lemma},
  year         = {2022},
  howpublished = {\url{https://mstoeckl.com/notes/research/sunflower_notes.html}}
}

\end{document}